\newtheoremstyle{thms}{0.2em}{0.2em}{\itshape}{}{\bfseries}{.}{ }{}
\theoremstyle{plain}
\theoremstyle{definition}
\newtheorem{theorem}{Theorem}[section]
\newtheorem{lemma}[theorem]{Lemma}
\newtheorem{claim}[theorem]{Claim}
\newtheorem{corollary}[theorem]{Corollary}
\newtheorem{definition}[theorem]{Definition}
\newtheorem{proposition}[theorem]{Proposition}
\newtheorem{remark}[theorem]{Remark}
\newtheorem{conjecture}[theorem]{Conjecture}
\newtheorem{definition-proposition}[theorem]{Definition-Proposition}
\DeclareMathOperator{\Ad}{\mathrm{Ad}}			
\DeclareMathOperator{\Aut}{\mathrm{Aut}}		
\DeclareMathOperator{\Dom}{Dom} 
\DeclareMathOperator{\End}{End} 
\DeclareMathOperator{\Gal}{Gal}	
\DeclareMathOperator{\Id}{Id} 
\DeclareMathOperator{\Norm}{Norm}	
\DeclareMathOperator{\Out}{Out}	
\DeclareMathOperator{\PGL}{PGL}		
\DeclareMathOperator{\pr}{pr}	
\DeclareMathOperator{\Pic}{Pic}    
\DeclareMathOperator{\rk}{rk}			
\DeclareMathOperator{\Res}{Res} 
\DeclareMathOperator{\SL}{SL}			
\DeclareMathOperator{\Sch}{Sch}  
\DeclareMathOperator{\Supp}{Supp} 
\DeclareMathOperator{\Spec}{Spec}		
\title{An equivariant compactification for adjoint reductive group schemes}
\author{Shang Li}
\address{Tsinghua University, Yau Mathematical Sciences Center, Beijing, China}
\email{shangli@tsinghua.edu.cn}
\date{\today}
\begin{document}

\setcounter{tocdepth}{1}

\maketitle

\begin{abstract}
    Wonderful compactifications of adjoint reductive groups over an algebraically closed field play an important role in algebraic geometry and representation theory. In this paper, we construct an equivariant compactification for adjoint reductive groups over arbitrary base schemes. The geometric fibers of our compactifications are the classical wonderful compactifications of De Concini and Procesi. Our construction is based on a variant of the Artin--Weil method of birational group laws, and, in the split case, dose not depend on the existence of the classical wonderful compactification over an algebraically closed field. In particular, our construction gives a new intrinsic construction of wonderful compactifications. The Picard group scheme of our compactifications is computed. We also discuss several applications of our compactifications in the study of torsors under reductive group schemes.
\end{abstract}

\tableofcontents

\pagestyle{plain}

\section{Introduction}

Reductive groups play a fundamental role both in algebraic geometry and in number theory. Seeking for an equivariant compactification of reductive groups is beyond any doubt an important problem, see the ICM report \cite{SpringerICM} for recent developments on this topic. The wonderful compactification for adjoint semisimple groups over algebraically closed fields is a good solution to this problem. The wonderful compactifications have a wide range of applications, for instance, in the study of character sheaves (see, \emph{loc.cit}, \cite{Ginsburg} and \cite{Lusztigparabolicsheaves}) and in the study of local models of Shimura varieties (see \cite{XuhuaHelocalmodel} and \cite[Section~8]{surveylocalmodel}).

In this paper, for an adjoint reductive group scheme $\mathbf{G}$ over a scheme $S$, we construct the following equivariant compactification for $\mathbf{G}:$

\begin{theorem}\label{mainresult}
(\Cref{solutiontoconjecadjoint}). There is a smooth projective $S$-scheme $\mathcal{X}$ containing $\mathbf{G}$ as an open schematically dense subscheme such that $\mathcal{X}$ is equipped with a $(\mathbf{G}\times_S\mathbf{G})$-action which extends the $(\mathbf{G}\times_S \mathbf{G})$-action $((g_1,g_2),g)\mapsto g_1gg_2^{-1}$ on $\mathbf{G}$. Each geometric fiber of $\mathcal{X}$ is identified with the wonderful compactification of the corresponding geometric fiber of $\mathbf{G}$. The boundary $\mathcal{X}\backslash \mathbf{G}$ is a smooth relative Cartier divisor with $S$-relative normal crossings.
\end{theorem}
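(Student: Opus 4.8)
The plan is to build $\mathcal{X}$ by the Artin--Weil method of birational group laws, in the form suited to an equivariant completion rather than to a group: produce an explicit smooth affine $S$-scheme $\Omega$ — the would-be big cell — together with a birational $(\mathbf{G}\times\mathbf{G})$-action extending two-sided translation on the dense open $\mathbf{G}$, and then invoke the reconstruction theorem to manufacture $\mathcal{X}$ from this ``chunk''. Working \'etale-locally on $S$ one may fix a pinning $(\mathbf{T},\mathbf{B},(x_\alpha))$ of $\mathbf{G}$ (reductive group schemes are \'etale-locally split); since $\mathbf{G}$ is adjoint, the simple roots form a basis of $X^{*}(\mathbf{T})$ and so define a dense open immersion $\mathbf{T}\hookrightarrow\mathbb{A}^{r}$ ($r=\rk\mathbf{G}$), which compactifies $\mathbf{T}$, via its $W$-translates, to the smooth complete toric variety $\overline{\mathbf{T}}$ of the fan of Weyl chambers. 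Then $\Omega:=\mathbf{U}^{-}\times\mathbb{A}^{r}\times\mathbf{U}$ (with $\mathbf{U},\mathbf{U}^{-}$ the unipotent radicals of $\mathbf{B}$ and the opposite Borel $\mathbf{B}^{-}$) is an affine-space bundle over $S$ with geometrically integral fibres — no characteristic restriction — in which the open Bruhat cell $\mathbf{U}^{-}\mathbf{T}\mathbf{U}\subset\mathbf{G}$ lies as a dense open.

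The next step is to equip $\Omega$ with the birational $(\mathbf{G}\times\mathbf{G})$-action, equivalently to write down finitely many charts $\Omega_{v,w}$, indexed by pairs of Weyl group elements lifted to $N_{\mathbf{G}}(\mathbf{T})$, together with transition maps, and to verify that these are isomorphisms of smooth $S$-schemes on the relevant open loci and satisfy the cocycle identity; the multiplication of $W$, the Chevalley commutation relations among the $x_{\alpha}$, and the fact that multiplying a point of $\overline{\mathbf{T}}$ by a cocharacter keeps it in $\overline{\mathbf{T}}$ are what make the bookkeeping close up, and it closes up uniformly over $\Spec\mathbb{Z}$. Feeding this into the Artin--Weil-type existence theorem (which I would first establish in a version producing an equivariant completion out of a birational action) yields a smooth separated $S$-algebraic space $\mathcal{X}$, covered by the $\Omega_{v,w}$, carrying a genuine $(\mathbf{G}\times\mathbf{G})$-action with dense orbit $\mathbf{G}$ (whose stabilizer is the diagonal $\mathbf{G}$). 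Because the charts and transitions were written down canonically — functorially in $\mathbf{G}$ and, in an intrinsic reformulation, without reference to the pinning — they glue over $S$ with no auxiliary choice, so $\mathcal{X}$ is defined over an arbitrary base; equivalently one constructs it over $\Spec\mathbb{Z}$ for the split form and descends along an \'etale cover splitting $\mathbf{G}$, the descent becoming effective once a linearized ample bundle is at hand.

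It remains to establish the global properties. Smoothness of $\mathcal{X}\to S$ is immediate from the chart $\Omega$. Properness I would get from the valuative criterion: an $S$-valued point over the fraction field of a valuation ring, after translation by $(\mathbf{G}\times\mathbf{G})$ into the big cell, reduces to extending a point of $\overline{\mathbf{T}}$ — automatic since its fan is complete — and of the affine spaces $\mathbf{U}^{\pm}$; equivalently, the finitely many $\Omega_{v,w}$ already exhaust $\mathcal{X}$. For projectivity I would exhibit a relatively ample bundle supported on the boundary, say $\mathcal{O}_{\mathcal{X}}(\mathcal{X}\setminus\mathbf{G})$, possibly twisted by the pullback of an ample bundle from the closed orbit, which is ample on each geometric fibre by De Concini--Procesi and hence $S$-relatively ample by properness and cohomology-and-base-change — this also re-proves that $\mathcal{X}$ is a scheme. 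For the fibres: a geometric fibre $\mathcal{X}_{\bar s}$ is by construction a smooth complete $(\mathbf{G}_{\bar s}\times\mathbf{G}_{\bar s})$-variety with dense orbit $\mathbf{G}_{\bar s}$ whose boundary is a union of $r$ smooth prime divisors crossing transversally with nonempty total intersection $D_{1}\cap\dots\cap D_{r}$ (the unique closed orbit, a product of two flag varieties), hence a wonderful variety, hence — by the uniqueness of the wonderful compactification — equivariantly the De Concini--Procesi compactification. Finally, in the chart $\Omega$ the boundary is $\mathbf{U}^{-}\times\{t_{1}\cdots t_{r}=0\}\times\mathbf{U}$, visibly a relative Cartier divisor whose $r$ branches $\{t_{i}=0\}$ are smooth over $S$ and cross transversally; translating by $(\mathbf{G}\times\mathbf{G})$ sweeps out all of $\mathcal{X}\setminus\mathbf{G}$, so it is an $S$-relative normal crossings divisor with smooth components.

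The main obstacle is the one inherent to the Artin--Weil philosophy: proving that the chart-and-cocycle data on $\Omega$ is effective, i.e. that it genuinely glues to a separated algebraic space with a regular $(\mathbf{G}\times\mathbf{G})$-action, and checking this uniformly over $\mathbb{Z}$ — in particular in small characteristics and without embedding into a projective space of matrices, which is exactly what keeps the construction intrinsic. Tied to it is the intrinsic proof of completeness: lacking a projective embedding a priori, properness must be extracted from the combinatorics of the toric part $\overline{\mathbf{T}}$ and the finitely many charts, and this is the technical core of the valuative-criterion step; the Picard computation and the fibrewise ampleness that eventually supply the polarization can only be invoked afterwards.
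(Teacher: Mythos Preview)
Your overall architecture matches the paper's: build the big cell $\overline{\Omega}=U^{-}\times\mathbb{A}^{r}\times U^{+}$ over the split form, endow it with a birational $(\mathbf{G}\times\mathbf{G})$-action, run an Artin--Weil-type quotient to obtain a smooth algebraic space $\mathcal{X}$, verify it is a projective scheme, identify geometric fibers with the De Concini--Procesi compactification, and then descend in the non-split case via an ample boundary divisor compatible with the descent datum. The properness argument via the valuative criterion reduced to the torus is also the paper's (the paper phrases the reduction as the Iwahori--Cartan decomposition $G(K)=G(R)T(K)G(R)$).

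Two points where your sketch diverges from, or underestimates, the paper's work. First, the step you label ``the bookkeeping closes up'' is the technical heart of the paper and is not organized as a system of charts $\Omega_{v,w}$ with cocycle checks. Instead the paper constructs a single $S$-rational map $\pi\colon G\times\overline{\Omega}\times G\dashrightarrow\overline{\Omega}$ directly, and the key device is to first extend conjugation by a representative $n_0$ of the longest Weyl element to $S$-rational endomorphisms $f,f'$ of $\overline{\Omega}$ (via explicit formulas on each simple reflection, of the shape $D=t_i+xy$), and then use $f,f'$ to ``swap'' the $U^{+}$ and $U^{-}$ contributions through the middle $\overline{T}$; this is what makes $\Dom(\pi)\supset\{e\}\times\overline{\Omega}\times\{e\}$ hold uniformly over $\Spec(\mathbb{Z})$. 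Your Chevalley-commutator heuristic does not by itself supply this, and the Weyl-indexed chart picture, while plausible, would require exactly this kind of analysis to write down the transition maps. Second, the complete toric variety of the Weyl fan plays no role: only the single affine chart $\overline{T}=\mathbb{A}^{r}$ enters, and the remaining cover of $\mathcal{X}$ comes from $(\mathbf{G}\times\mathbf{G})$-translates of $\overline{\Omega}$, not from toric gluing. Once $\pi$ is in hand, the paper realizes $\mathcal{X}$ as the quotient of $G\times\overline{\Omega}\times G$ by an explicit fppf equivalence relation (encoded in the graph of an auxiliary rational map), invokes Artin's theorem to get an algebraic space, and proves $\overline{\Omega}\hookrightarrow\mathcal{X}$ is an open immersion by a formal-smoothness argument; schematicness and quasi-projectivity then follow from \cite[\S6.6, Theorem~2]{BLR}.
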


In particular, if the base scheme $S$ in \Cref{mainresult} is the spectrum of an algebraically closed field, the scheme $\mathcal{X}$ recovers the classical wonderful compactification of $\mathbf{G}$. 

In algebraic geometry, the search for proper smooth schemes over $\Spec(\mathbb{Z})$ is a deep topic and seems to date back to Grothendieck \cite[page 242]{Mazurarithmeticoncurves}. Actually, as shown by Fontaine in \cite{Fontainepropersmoothscheme}, there are some constraints for a smooth proper scheme over $\Spec(\mathbb{Z})$ to exist. Moreover, many famous non-existence results of smooth proper $\Spec(\mathbb{Z})$-scheme have been established, for instance, \cite{Fontainethereisnoabelianscheme} and \cite{SchroerthereisnoenriquessurfaceoverZ}. In this aspect, a remarkable feature of the existence result \Cref{mainresult} is that, for any (not necessarily split) reductive group scheme over $\Spec(\mathbb{Z})$ of adjoint type, we can associate to it a smooth proper scheme over $\Spec(\mathbb{Z})$ whose geometry is closely related to the reductive group scheme. 

Our construction of $\mathcal{X}$ in \Cref{mainresult} is intrinsic and functorial in the sense that we avoid embedding the group $\mathbf{G}$ into an ambient projective space, and we use a sheaf-theoretic argument so that the formation of $\mathcal{X}$ is compatible with any base change. 

In \cite[Conjecture 6.2.3]{CKproblemtorsors}, \v{C}esnavi\v{c}ius raises a question about how to equivariantly embed a reductive group scheme over a base scheme into a projective scheme in which the group is relatively dense with respect to the base scheme. \Cref{mainresult} gives a positive answer to this question for adjoint reductive group schemes. As a consequence of \Cref{mainresult}, we deduce the following lifting property of torsors:

\begin{corollary}
    (\Cref{compactificationtorsor}). Let $A$ be an isotrivial torsor under an adjoint reductive group $\mathbf{G}$ over an affine semilocal scheme $S$. For each $a\in A(Z)$ with $Z$ a closed subscheme of $S$, there exist a finite étale cover $\widetilde{S}$ of $S$, an $S$-morphism
    $\nu\colon Z\rightarrow\widetilde{S}$ and a section $\widetilde{a}\in A(\widetilde{S})$ whose $\nu$-pullback is $a$.
\end{corollary}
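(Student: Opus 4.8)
The plan is to deduce the corollary from \Cref{mainresult} by twisting the wonderful compactification by the torsor $A$ and then cutting out, by a Bertini-type argument over the semilocal base, a finite étale multisection of $A$ that passes through the prescribed point $a$.

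First I would form the twist $\overline{A}:=A\times^{\mathbf{G}}\mathcal{X}$, using one of the two $\mathbf{G}$-actions on $\mathcal{X}$ furnished by \Cref{mainresult}. Since $A$ is isotrivial, the $\mathbf{G}$-equivariant relatively ample line bundle on $\mathcal{X}$ — and hence the projective scheme structure — descend along a finite étale cover of $S$ trivializing $A$, so $\overline{A}$ is a smooth projective $S$-scheme containing $A$ as a fibrewise dense open subscheme, with geometrically integral fibres (each a wonderful compactification of the corresponding fibre of $A$), and with $\partial\overline{A}:=\overline{A}\setminus A$ a relative normal crossings divisor, in particular of pure relative codimension $1$. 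It then suffices to exhibit a closed subscheme $C\subseteq A$ that is finite étale over $S$ and contains the image $a(Z)$: one then puts $\widetilde{S}:=C$, lets $\nu\colon Z\to C$ be $a$ regarded as a section over $Z$ (legitimate because $a(Z)\subseteq C$), and lets $\widetilde{a}\in A(\widetilde{S})$ be the inclusion $C\hookrightarrow A$, so that the $\nu$-pullback of $\widetilde{a}$ equals $\widetilde{a}\circ\nu=a$.

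To build $C$, fix a closed immersion $\overline{A}\hookrightarrow\mathbb{P}^{N}_{S}$ and set $d:=\dim\mathbf{G}$. The immersion $a(Z)\hookrightarrow\overline{A}$ is regular, and its conormal sheaf surjects onto $\Omega^{1}_{\overline{A}/S}|_{a(Z)}$, a locally free sheaf of rank $d$ on $a(Z)\cong Z$. Since $Z$ is a closed subscheme of the affine semilocal $S$ it is itself affine and semilocal, so this sheaf is free, and for $m\gg0$ a frame of it lifts to sections $s_{1},\dots,s_{d}$ of $\mathcal{I}_{a(Z)}(m)$ on $\mathbb{P}^{N}_{S}$. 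Off $a(Z)$ the linear system $|\mathcal{I}_{a(Z)}(m)|$ is base-point free, so a version of Bertini's theorem over the semilocal ring $\Gamma(S,\mathcal{O}_{S})$ — proved by adjusting the $s_{i}$ modulo each of the finitely many maximal ideals and using $m\gg0$ to accommodate possibly finite residue fields — lets us choose the $s_{i}$ general enough that $C:=\overline{A}\cap V(s_{1},\dots,s_{d})$ is smooth over $S$, of relative dimension $0$ everywhere (the locus over which the fibre has positive dimension is closed in $S$, avoids the generic points by Bertini over the function fields and can be arranged to avoid the finitely many closed points, hence is empty), and disjoint from $\partial\overline{A}$ (which, having relative dimension $d-1$ and being disjoint from the base locus $a(Z)$, is cut down to the empty scheme by $d$ general members, again using properness over $S$). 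The transversality built into the choice of $s_{1},\dots,s_{d}$ along $a(Z)$ forces $C$ to be smooth of relative dimension $0$ also at the points of $a(Z)$. Hence $C\to S$ is proper and quasi-finite, so finite, and smooth of relative dimension $0$, so finite étale (and surjective onto $S$, since $d$ hypersurfaces meet every $d$-dimensional projective fibre); it lies in $A=\overline{A}\setminus\partial\overline{A}$; and it contains $a(Z)$, as required.

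The crux is the Bertini step: one must secure the transversality of $C$ to the fibres \emph{along} the base locus $a(Z)$ — where ordinary Bertini is silent — simultaneously with the genericity needed off $a(Z)$ for smoothness, relative dimension $0$ and avoidance of the boundary, and all of this over a base that is only semilocal. The transversality along $a(Z)$ is exactly where the affine semilocal hypothesis on $S$ (hence on $Z$) enters, through freeness of vector bundles over semilocal rings, while the properness of $\overline{A}$ over $S$ supplied by \Cref{mainresult} is what promotes the quasi-finite multisection to a finite one contained in $A$. Alternatively, one could isolate the last two paragraphs as a general lemma on spreading out sections of smooth $S$-schemes that admit a projective fibrewise compactification, over affine semilocal bases.
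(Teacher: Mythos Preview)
Your proposal is correct and follows essentially the same route as the paper: twist $\mathcal{X}$ by the torsor to obtain a smooth projective compactification $\overline{A}=A\times^{\mathbf{G}}\mathcal{X}$ of $A$ (using isotriviality for projectivity), and then run a Bertini-type argument over the semilocal base to cut out a finite \'etale multisection through $a(Z)$. The paper delegates the Bertini step to \cite[Lemma~6.2.2]{CKproblemtorsors} rather than sketching it, but your outline of that step (lifting a frame of $\Omega^{1}_{\overline{A}/S}|_{a(Z)}$, handling finite residue fields via large $m$, and using properness to force finiteness and avoidance of the boundary) is the standard execution of exactly that lemma.
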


Indeed, by a twist argument, we deduce from \Cref{mainresult} an equivariant compactification containing the torsor $A$ as a $S$-dense open subscheme. Then we use the Bertini theorem applied to each closed fiber of the compactification of $A$ to cut out the desired finite étale cover $\widetilde{S}$.
For a further consequence of \Cref{mainresult} that concerns the techniques of equating reductive (not necessarily adjoint) group schemes, see \Cref{equatinggroup}.

\subsection{Classical wonderful compactifications}\label{introclassical}
Let us briefly review the theory of wonderful compactifications.
Consider an adjoint reductive group $G$ over an algebraically closed field $k$.  The wonderful compactification $X$ of $G$ was introduced by De Concini and Procesi \cite{completesymmetricvarieties} in characteristic zero, and by Strickland \cite{strickland} in arbitrary characteristic, and it satisfies the following properties:
\begin{itemize}
    \item $X$ is a smooth projective variety containing $G$ as an open schematically dense subvariety such that $X$ is equipped with an action of $G\times_k G$ that extends the action $(g_1,g_2)\cdot g= g_1 gg_2^{-1}$ of $G\times_k G$ on $G$;
    \item the boundary $X\backslash G$ is a union of $(G\times_k G)$-stable smooth prime divisors $X_i$ ($i=1,...,l$) with normal crossings;
    \item the closures of the $(G\times_k G)$-orbits in $X\backslash G$ are the partial intersections of the $X_i$.
\end{itemize}
The idea of their constructions is to define $X$ as the closure of $G$ in a projective space constructed via representation theory. In \cite{compactificationHilbertsch}, Brion gave an algebro-geometric realization of $X$ by taking the closure of $G$ in the Hilbert scheme of a certain flag variety of $G\times G$. 

To investigate the arithmetic aspects of the wonderful compactifications, a basic question is to seek an integral model of wonderful compactifications. In \cite{Faltingsresolution}, Faltings sketched an integral model of wonderful compactifications (for more general symmetric varieties) over $\Spec (\mathbb{Z}_p)$, where $p$ is the characteristic of the base field $k$, see also \cite{SpringerConcinicompactification}. In an unpublished work, Gabber constructed the wonderful compactification for a Chevalley group over $\Spec(\mathbb{Z})$.

The main feature of the wonderful compactification $X$ that we are going to make use of is that $X$ contains an open subvariety $\overline{\Omega}$ which is usually called the big cell of $X$ and is isomorphic to the affine space $U^-\times_k \mathbb{A}^l_k\times_k U^+ $, where $l$ is the rank of $G$ and $U^+$ (resp., $U^-$) is the unipotent radical of a Borel subgroup (resp., of an opposite Borel subgroup). The geometry of $X$ is, in many respects, largely controlled by $\overline{\Omega}$ mainly because translates of $\overline{\Omega}$ under the action of $G\times_k G$ cover $X$. For instance, the smoothness of $X$ follows from that of $\overline{\Omega}$ (see, for instance, \cite[Theorem~6.1.8 (i)]{BrionKumar}). Therefore, we may seek to  recover $X$ as a quotient of $G\times_k\overline{\Omega}\times_k G$. One advantage of this is that $\overline{\Omega}$ is much simpler than $X$ in the sense that it has explicit coordinates, and after shrinking if needed, the birational action of $G\times_k G$ on $\overline{\Omega}$ can be defined over $\Spec(\mathbb{Z})$. This suggests for us to define a birational action of $ G\times_k  G$ on $\overline{\Omega}$ over more general base schemes, here a birational action is an analogue of the notion of a birational group law that was first due to Weil in the setting of  varieties, and later was generalized to schemes by Artin (see \cite{groupschemeoutofbirationallaw}).

\subsection{The proof of \Cref{mainresult}}
 The overall strategy of the proof is that we first establish \Cref{mainresult} for split groups and then make a descent argument.

 \subsubsection{Split case}
 Now assume that $\mathbf{G}$ splits over $S$ with respect to a split maximal torus $\mathbf{T}$, choose a Borel subgroup $\mathbf{B}$ containing $\mathbf{T}$, and denote the corresponding based root datum of $\mathbf{G}$ by $(X,\Psi, \Delta, X^{\vee}, \Psi^{\vee}, \Delta^{\vee})$. Then we have two big cells, and an immersion between them modelled on classical wonderful compactifications:
$$\Omega_{\mathbf{G}}\coloneq\mathbf{U}^-\times_S \mathbf{T}\times_S \mathbf{U}^+\longrightarrow \overline{\Omega}\coloneq\mathbf{U}^-\times_S \overline{\mathbf{T}}\times_S \mathbf{U}^+,$$
that is the identity on $\mathbf{U}^+$ and $\mathbf{U}^-$, and sends $\mathbf{T}$ to $\overline{\mathbf{T}}$ via negative simple roots, where $\overline{\mathbf{T}}$ is the product $\prod_{\Delta} \mathbb{G}_a$.
In \Cref{theoremrationalaction}, we obtain a rational action of $\mathbf{G}\times_S \mathbf{G}$ on $\overline{\Omega}$ which is a rational $S$-morphism  $$\pi\colon \mathbf{G}\times_S\overline{\Omega}\times_S\mathbf{G}\dashrightarrow \overline{\Omega}$$
    such that for any $x\in S$, the geometric fiber $\pi_{\overline{k(x)}}$, on a certain large enough open subscheme, coincides with the rational action of $(\mathbf{G})_{\overline{k(x)}}\times_{\overline{k(x)}}(\mathbf{G})_{\overline{k(x)}}$ on the big cell $\overline{\Omega}_{\overline{k(x)}}$ of the wonderful compactification of ${\mathbf{G}}_{\overline{k(x)}}$ over the algebraically closed field $\overline{k(x)}$. A similar rational group law on $\Omega_{\mathbf{G}}$ is used to prove the Existence Theorem in  \cite[exposé~XXV]{SGA3III}. Comparing with the construction of the rational group law on $\Omega_{\mathbf{G}}$ in \emph{loc. cit.,} which assumes the Existence Theorem over $\mathbb{C}$, our construction of $\pi$ does not rely on the classical wonderful compactification.

Using the rational action $\pi$, we define the following equivalence relation (\Cref{relationonOmega}) on $ \mathbf{G}\times_S\overline{\Omega}\times_S \mathbf{G}$:
     for every $S$-scheme $S'$ and $(g_1, x, g_2)$, $(g_1', x', g_2')\in( \mathbf{G}\times_S\overline{\Omega}\times_S \mathbf{G} )(S')$, we say that $(g_1, x, g_2)$ and $(g_1', x', g_2')$ are equivalent if and only if, there exist an fppf cover $S''\rightarrow S'$ and a section $(a_1, a_2)\in ( \mathbf{G}\times_S \mathbf{G})(S'')$   such that $\pi(a_1g_1,x,a_2g_2)$ and $\pi(a_1g_1',x',a_2g_2')$ are both well defined and equal over $S''$.

One advantage of the above definition is that, although $\pi$ is merely defined over an open subscheme of $\mathbf{G}\times_S\overline{\Omega}\times_S\mathbf{G}$, one can use $(\mathbf{G}\times_S \mathbf{G})$-translations to bring two sections into the definition domain of $\pi$ so that they can be compared via $\pi$. We can then define the compactification $\mathcal{X}$ of $\mathbf{G}$ as an fppf-sheaf on $\Sch/S$ by taking the quotient of $\mathbf{G}\times_S\overline{\Omega}\times_S \mathbf{G}$ with respect to this equivalence relation. 

The proof of the fact that $\mathcal{X}$ is a scheme involves two steps. We first reinterpret $\mathcal{X}$ as a quotient of $\mathbf{G}\times_S\overline{\Omega}\times_S\mathbf{G}$ with respect to an fppf relation (\Cref{algebraicspace}), hence by a theorem due to Artin (\cite[Corollaire (10.4)]{Champsalgebrique}), $\mathcal{X}$ is an algebraic space. Secondly, we endow $\mathcal{X}$ with a group action of $\mathbf{G}\times_S\mathbf{G}$ (\Cref{definitionofgrpaction}), so that the $(\mathbf{G}\times_S\mathbf{G})$-translates of the open subscheme $\overline{\Omega}$ of $\mathcal{X}$ (\Cref{representedopenimmer}) cover $\mathcal{X}$. Thus, the schematic nature of $\mathcal{X}$ follows from a Zariski gluing of scheme-like open neighborhoods of points of $\mathcal{X}$, see \cite[Section~6.6, Theorem~2]{BLR}.

The scheme $\mathcal{X}$ is projective over $S$ (\Cref{projectivity}).
Roughly, the quasi-projectivity of $\mathcal{X}$ comes from the relative ampleness of the boundary divisor $\mathcal{X}\backslash \overline{\Omega}$, see \cite[Section~6.6, Theorem~2 (d)]{BLR}. By a variant of the valuative criterion for properness and the Iwahori decomposition, we deduce the properness of $\mathcal{X}$, see \Cref{properness} for details.

To finish the proof of \Cref{mainresult} in the split case,  by the functoriality of the definition of $\mathcal{X}$ and the fiberwise condition on $\pi$, we deduce that the geometric fibers of $\mathcal{X}$ coincide with the wonderful compactifications of the corresponding geometric fibers of $\mathbf{G}$, see \Cref{groupembedtocompac}.

\subsubsection{Non-split case} Now we consider $\mathbf{G}$ as an adjoint reductive group scheme over $S$. By \cite[exposé~XXII, corollaire~2.3]{SGA3III}, we may assume that there exists an étale cover $U$ of $S$ and a split reductive $S$-group $\mathbf{G}_0$, which is isomorphic to $\mathbf{G}$ over $U$. Applying the split case of \Cref{mainresult} to $\mathbf{G}_0$ to produce a compactification $\mathcal{X}_0$, the rest of the proof is to descend $\widetilde{\mathcal{X}}:=\mathcal{X}_0\times_S U$ to get the sought $\mathcal{X}$ for $\mathbf{G}$. 
Note that $\mathbf{G}$ is descended from $\widetilde{\mathbf{G}}:=\mathbf{G}_0\times_S U$ via a \v{C}ech cocycle $\chi$ which is an element of $\Aut_{\mathbf{G}_0/S}(U\times_S U)$ together with a cocycle condition. Then by \cite[exposé~XXIV, théoreme~1.3]{SGA3III}, $\chi$ is the composition of an inner automorphism $\delta$ and an outer automorphism $\eta$. The inner part $\delta$ naturally induces an automorphism of $\widetilde{\mathcal{X}}$ via the group action on $\widetilde{\mathcal{X}}$, and we show that the outer part $\eta$ also induces an automorphism of $\widetilde{\mathcal{X}}$, see \Cref{outmorphismpreserverelation}. Combining these two automorphisms on $\widetilde{\mathcal{X}}$ together, by the schematic density of $\widetilde{\mathbf{G}}$ in $\widetilde{\mathcal{X}}$, we get a descent datum on $\widetilde{\mathcal{X}}$. To make this descent datum effective, we construct an ample divisor compatible with the descent datum, see \Cref{descentampleCartier}. The group action and the fiberwise condition on $\widetilde{\mathcal{X}}$ also descend, see \Cref{solutiontoconjecadjoint}.

\subsection{Divisors and relative Picard schemes}
In \Cref{boundarydivisor}, we show that the Picard scheme of $\mathcal{X}$ in \Cref{mainresult}, when $\mathbf{G}$ is split, is a constant scheme associated to the free abelian group of rank equal to the rank of $\mathbf{G}$. Roughly, base change reduces us to the case when the base $S$ is $\Spec(\mathbb{Z})$, and a fibral criterion \cite[Corollaire 17.9.5]{EGAIV4} further reduces us to the case when $S$ is an algebraically closed field, which follows from the coresponding result for the classical wonderful compactifications (\cite[Lemma 6.19]{BrionKumar}).

We also show that when $\mathbf{G}$ is split, the boundary $\mathcal{X\backslash \mathbf{G}}$ shares some nice properties of classical wonderful compactifications as we recall in \Cref{introclassical}, see \Cref{boundarydivisors}. We also study the Picard group of $\mathcal{X}$ and its generators in certain nonsplit case, see \Cref{picardgroupnonsplit}.

\subsection{Notation and conventions}
All rings are commutative and unital. For the $n$-fold product $X_1\times X_2\times...\times X_n$ of schemes, for $\{i_1,i_2,...,i_l\}$ which is a strict ascending sequence of $\{1,2,...,n\}$, we denote by $\pr_{i_1,i_2,...,i_l}$ the projection from $X_1\times X_2\times...\times X_n$ onto the product of the factors with indices $i_1,i_2,...,i_l$. 

We use dotted arrows to depict a rational morphism. For an $S$-rational morphism $f$ between two schemes $X$ and $Y$ over a scheme $S$, a test $S$-scheme $S'$ and a section $x\in X(S')$, we say that $x$ is well defined with respect to $f$ if the image of $x$ in $X_{S'}$ lies in the definition domain of $f_{S'}$. When $Y$ is separated over $S$, we denote by $\Dom(f)$ the definition domain of $f$. For the existence and the uniqueness of the definition domain of a rational morphism, see \cite[exposé~XVIII, définition~1.5.1]{SGA3II}.

For an effective Cartier divisor $\mathbf{D}$ of a scheme $X$, we write $\mathcal{L}_{X}(\mathbf{D})$ for the dual sheaf of the ideal sheaf of $\mathbf{D}$. 

A reductive group scheme is assumed to be fiberwise connected. 

For an affine scheme $X$ and an $f\in\Gamma (X,\mathcal{O}_X)$, we denote the closed (resp., open) subscheme of $X$ defined by $f$ as $V_X(f)$ (resp., $D_X(f)$).

Let $X$ be a scheme. In this paper, following \cite[définition~11.10.2]{EGAIV3}, we say that a subscheme $j:Y\hookrightarrow X$ is schematically dense if the structural morphism of sheaves $\mathcal{O}_X\rightarrow j_*(\mathcal{O}_Y)$ is injective.

Let $X$ be a scheme over a scheme $S$, and let $Y\subset X$ be an $S$-subscheme. We say that $Y$ is $S$-dense if, for any $S$-scheme $S'$, the base change $Y_{S'}$ is schematically dense in $X_{S'}$, cf. \cite[définition~11.10.8]{EGAIV3} or \cite[exposé~XVIII, \S~1]{SGA3II}. By \cite[théorème~11.10.9]{EGAIV3}, if $X$ is locally noetherian and $Y$ is flat over $S$, then the $S$-density of $Y$ is equivalent to the ``fiberwise density'' of $Y$, i.e., for any point $s\in S$, the fiber $Y_s$ is schematically dense in the fiber $X_s$.

\subsection{Acknowledgements}
I would like to thank my advisor Kęstutis Česnavičius deeply for his instruction on mathematics and writing and his sustained encouragement. I am especially grateful to Ofer Gabber for sharing his insights that are crucial to this paper. I heartfully thank Michel Brion and Alexis Bouthier for several very helpful conversations and suggestions. I thank Jean-Loup Waldspurger for a very critical remark that significantly improved the manuscript during a seminar talk. I thank Timo Richarz, Stéphanie Cupit-Foutou, Ning Guo, Arnab Kundu, Zhouhang Mao, Sheng Chen and Federico Scavia for helpful correspondence. I also thank the referees for
many useful suggestions. This project has received funding from the European Research Council (ERC) under the European Union’s Horizon 2020 research and innovation programme (grant agreement No. 851146).

\section{Recollection of wonderful compactifications}\label{wonderfulcomp}
In this section, we briefly review the classical theory of wonderful compactifications. We fix an algebraically closed field $k$, and if a fiber product is formed over $k$, we will omit the subscript $k$. 

\subsection{The group setup}
Let $G$ be an adjoint reductive group over $k$, whose rank is $l$, and let $G_{\text{sc}}$ be its simply connected covering. Fix a Borel subgroup $B_{\text{sc}}\subset G_{\text{sc}}$ which contains a maximal subtorus $T_{\text{sc}}$ of $G_{\text{sc}}$, then $B_{\text{sc}}$ determines a system of simple roots $\Delta=\{\alpha_1,...,\alpha_l\}$ for the root system $\Psi$ of $G_{\text{sc}}$ in the character lattice $X^*(T_{\text{sc}})$. We denote the resulting sets of positive roots and negative roots by $\Psi^+$ and $\Psi^-$. Let $B_{\text{sc}}^-$ be the opposite Borel of $B_{\text{sc}}$ so that $B_{\text{sc}}^-\cap B_{\text{sc}}=T_{\text{sc}}$, and let $U^+$ and $U^-$ be the unipotent radicals of $B_{\text{sc}}$ and $B_{\text{sc}}^-$. For a root $\alpha\in \Psi$, we write $U_{\alpha}$ for the root subgroup defined by $\alpha$. Let $T$ be the image of $\widetilde{T}$ in $G$. We identify the set of roots of $G_{\text{sc}}$ with that of $G$ by the natural embedding $X^*(T)\hookrightarrow X^*(T_{\text{sc}})$.

We choose a character $\lambda\colon T_{\text{sc}}\rightarrow \mathbb{G}_m$ that is regular dominant in the sense that $\langle\lambda,\alpha^\vee\rangle\textgreater 0$ for every positive coroot $\alpha^\vee$.

\subsection{Construction of wonderful compactification}\label{reviewofwonderful}
By \cite[Lemma~6.1.1 and Remark~6.1.2]{BrionKumar}, we can fix an algebraic finite-dimensional representation $V_{\lambda}$ of $G_{\text{sc}}$ such that 
\begin{itemize}
    \item the $\lambda$-weight subspace is of dimension 1;
    \item any weight of $T_{\text{sc}}$ other than $\lambda$ in the representation $V_{\lambda}$ is lower than $\lambda$;
    \item for any simple root $\alpha\in\Delta$, the weight space of $\lambda-\alpha$ is nonzero.
\end{itemize}
By the adjointness of $G$ and the regularity of $\lambda$ (see \cite[Lemma~6.1.3]{BrionKumar}), the group $G$ is embedded into  $\mathbb{P}(\End(V_{\lambda}))$ in such a way that the following square commutes:
$$\xymatrix{
G_{\text{sc}} \ar[d] \ar[r]
&\End(V_{\lambda})\ar[d]\\
G_{\text{sc}} \ar@{^{(}->}[r]         &\mathbb{P}(\End(V_{\lambda})).}$$
We define the wonderful compactification $X$ of $G$ to be the closure of $G$ in $\mathbb{P}(\End(V_{\lambda}))$. The $(G\times G)$-action on $G$ naturally extends to $X$. By \cite[Theorem~6.1.8 (iv)]{BrionKumar}, the wonderful compactification $X$ does not depend on the choice of $\lambda$ and $V_{\lambda}$.

\subsection{Big cells of wonderful compactifications}\label{lift}
In order to study the geometry of the wonderful compactification $X$, we fix a basis $v_0, v_1,..., v_n$ of $V_{\lambda}$ consisting of eigenvectors of $T_{\text{sc}}$ such that $v_0$ has weight $\lambda$. Let $(v_i^*)_{0\leq i\leq n}$ be the dual basis of $(v_i)_{0\leq i\leq n}$ in the dual vector space $V_{\lambda}^*$. Consider the open subscheme with set of $k$-rational points 
\begin{equation*}\label{equationofbigcell}
\overline{\Omega}(k):=\left\{f\in X(k)\;\vert\; v_0^*(\widetilde{f}(v_0))\neq 0  \;\text{for some lift $\widetilde{f}$ of $f$ in $\End(V_{\lambda})$}\right\}\subset X(k)\subset \mathbb{P}(\End(V_{\lambda}))(k)
\end{equation*}
which is often referred as the big cell of $X$. By \cite[Proposition~6.1.7]{BrionKumar}, the big cell $\overline{\Omega}$ of $X$ is isomorphic to $U^-\times \overline{T}\times U^+$, where $\overline{T}=\prod_{\Delta}\mathbb{G}_a$. 

By \cite[Theorem~6.1.8 (i)]{BrionKumar}, the $(G\times G)$-translates of the big cell $\overline{\Omega}$ cover $X$ i.e., 
\begin{equation}\label{translationbigcell}
    X=\bigcup_{(g_1, g_2)\in G\times G}(g_1, g_2)\cdot \overline{\Omega}.
\end{equation}

Recall that, by \cite[\S~5.1 Lemma~4]{BLR} or (in a much greater generality) \cite[exposé~VI$_A$, proposition~0.5]{SGA3I}, we have $G= \Omega_G \cdot\Omega_G$, where $\Omega_G\coloneq U^-\times T\times U^+$ is identified with the big cell of $G$ via group multiplication. Moreover, by the proof of the Existence Theorem (see \cite[exposé~XXV]{SGA3III}), $G$ is constructed as a quotient of $\Omega_G\times\Omega_G$. Hence, \Cref{translationbigcell} suggests that the wonderful compactification $X$ might be constructed as a quotient of $G\times\overline{\Omega}\times G$, which will be done in the following sections. Note that in \emph{loc. cit.}, the Existence Theorem for split reductive group schemes is proved assuming that the corresponding result over the complex numbers is known. However, as we will see in the following two sections, the construction of our compactification for split reductive group schemes of adjoint type does not depend on the existence of the classical wonderful compactification over an algebraically closed field.

\section{Rational action on big cells}\label{sectionrationalact}
In this section, we define a rational action of the product of two copies of an adjoint reductive group scheme $G$ on an affine space which models the big cell of a wonderful compactification. Passing to each geometric fiber, this rational action reflects the group action on wonderful compactifications. We then study various properties of this rational action. These properties are crucial for our constructions in \Cref{sectioncompactification}.

\subsection{The setup}
Let $G$ be an adjoint split reductive group scheme over a scheme $S$, and choose a maximal split torus $T$ as in \cite[exposé~XXII, définition~1.13]{SGA3III} and a Borel subgroup $B$ containing $T$. We denote by $e\in G(S)$ the identity section. In this section, if a fiber product is formed over $S$, the lower subscript $S$ will be omitted. Assume that $\text{rk}(T)=l$. We denote the Lie algebras of $T\subset B\subset G$ by $\mathfrak{t}\subset \mathfrak{b}\subset \mathfrak{g}$. Let $(X,\Psi, \Delta, X^{\vee}, \Psi^{\vee}, \Delta^{\vee})$ be the based root datum defined by the Borel $B$, and let $\Psi^+$ (resp., $\Psi^-$) be the set of positive (resp., negative) roots. Then we have the canonical decomposition of $\mathfrak{g}$ into root spaces: $\mathfrak{g}=\bigoplus_{\alpha\in\Psi}\mathfrak{g}_{\alpha}$. We fix an enumeration $\left\{\alpha_1,...,\alpha_l\right\}$ of $\Delta$.

According to \cite[exposé~XXIII, proposition~6.2]{SGA3III}, we can fix a Chevalley system $(X_\alpha\in \Gamma(S,\mathfrak{g}_{\alpha})^{\times})_{\alpha\in\Psi}$ for $G$. Each $X_\alpha$ gives rise to an isomorphism of $S$-groups $$p_\alpha\colon\mathbb{G}_{a,S}\longrightarrow U_\alpha,\;\;\alpha\in\Psi.$$ 
In the following, we will take the coordinate function of $\mathbb{G}_{a,S}$ as coordinate function of $U_{\alpha}$ via the isomorphism $p_{\alpha}$.

We define an $S$-morphism:
\begin{equation}\label{Omegaembedformula}
    \nu\colon\Omega_G:=U^-\times T\times U^+\longrightarrow \overline{\Omega}:=U^-\times \overline{T}\times U^+
\end{equation}
via
$$(u',t,u)\mapsto (u',\prod_{\alpha_i\in \Delta}-\alpha_i(t), u),$$
where $\overline{T}:=\prod_{\Delta}\mathbb{G}_{a,S}.$ In the rest of this paper, the subscheme $\overline{T}$ is canonically identified with the subscheme $\{e\}\times\overline{T}\times\{e\}\subset\overline{\Omega}$, and similarly for $U^-$ and $U^+$.
By the adjointness of $G$, the $\nu\vert_{T}$ is a monomorphism, which is then an open immersion by appealing to \cite[\S~5.3]{oesterlemultiplicatifgroup}, and hence so is $\nu$. In the following, we will implicitly view $\Omega_G$ as a subscheme of $\overline{\Omega}$ via $\nu$. In particular, $T$ acts on $\overline{T}$ via $\nu$. For each simple root $\alpha_i\in \Delta$, we denote by $\mathbb{X}_i$ the coordinate of $\mathbb{G}_{a,S}$ indexed by $\alpha_i$ in $\overline{T}\subset\overline{\Omega}$.

\subsection{The rational action}
We want to define an action of $G\times G$ on $\overline{\Omega}$. In the case of classical wonderful compactification, we know that the action of $G\times G$ does not preserve $\overline{\Omega}$. Hence, instead of defining the action on the entire big cell $\overline{\Omega}$, we seek to take advantage of ``rational action'' which is similar to the notion of a birational group law \cite[exposé~XVIII, définition ~3.1]{SGA3II} (see also \cite[Chapter~4]{BLR}). We require that this ``rational action'' should have large enough domain of definition and coincide on the geometric fibers with the rational $(G\times G)$-action on the classical wonderful compactification. The main result in this part is \Cref{theoremrationalaction}, which is done step by step  by the subsequent lemmas.

\begin{lemma}\label{singlereflectrational}
    Let $\alpha_i\in \Delta$ and let $n_{\alpha_i}=p_{\alpha_i}(1)p_{-{\alpha_i}}(-1)p_{\alpha_i}(1)\in \Norm_G(T)(S)$, which is a representative of the simple reflection $s_i$ defined by $\alpha_i$ in the Weyl group $W$ \cite[exposé~XXII, 3.3]{SGA3III}. Let us fix an enumeration $\{\Psi^-\backslash -\alpha_i, -\alpha_i,\alpha_i, \Psi^+\backslash \alpha_i\}$ of $\Psi$. We consider $V_i\coloneq D_{\overline{\Omega}}(\mathbb{X}_i+xy)$, where $x,y$ are the coordinates of $U_{-\alpha_i}$ and $U_{\alpha_{i}}$ in $\overline{\Omega}$ (which make sense because of the enumeration).  Then there exists a unique morphism $f_i\colon V_i\longrightarrow  \overline{\Omega}$  such that
    \begin{itemize}
        \item[(1)] $f_i$ extends the restriction of the conjugation by $n_{\alpha_i}$ on $G$ to $\Omega_G\bigcap V_i\subset G$;
        \item[(2)] $f_i$ sends $U_{\alpha}$ to $U_{s_i(\alpha)}$, for $\alpha\in\Psi$.
    \end{itemize}

\end{lemma}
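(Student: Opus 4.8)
The plan is to construct $f_i$ explicitly on the big cell $\overline{\Omega} = \mathbf{U}^- \times \overline{\mathbf{T}} \times \mathbf{U}^+$ using the known conjugation formulas for a rank-one $\SL_2$ (or $\PGL_2$) subgroup, then check that the formula is defined precisely on $V_i$ and that it extends the conjugation on $\Omega_{\mathbf{G}} \cap V_i$. First I would reduce to understanding the action of $n_{\alpha_i}$ on the $i$-th "coordinate direction". Conjugation by $n_{\alpha_i}$ permutes the root subgroups by $s_i$, so it fixes $\mathbf{U}^\pm_\alpha$ for $\alpha \in \Psi^\pm$ with $\alpha \neq \pm\alpha_i$ (using that $s_i$ permutes $\Psi^+ \setminus \{\alpha_i\}$), and it swaps $U_{\alpha_i} \leftrightarrow U_{-\alpha_i}$ up to a sign depending on the Chevalley system. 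The only subtle interaction is inside the rank-one subgroup $\mathbf{G}_i$ generated by $U_{\pm\alpha_i}$ together with its torus, where the relevant piece of $\overline{\Omega}$ is $U_{-\alpha_i} \times \mathbb{A}^1_{\mathbb{X}_i} \times U_{\alpha_i}$, a big cell of the wonderful compactification of $\PGL_2$. So the heart of the argument is the $\PGL_2$-computation: on $\overline{\mathbb{P}^1 \times \mathbb{P}^1}$-type coordinates $(y, \mathbb{X}_i, x)$, write down what conjugation by the standard Weyl element does, observe that it is a birational involution whose domain is exactly where $\mathbb{X}_i + xy \neq 0$, and record the resulting formula.

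Next I would assemble the global formula. Given a section $(u^-, \tau, u^+)$ of $V_i \subset \overline{\Omega}$, decompose $u^- = u^-_{\neq i} \cdot p_{-\alpha_i}(y)$ and $u^+ = p_{\alpha_i}(x) \cdot u^+_{\neq i}$ (using that $s_i$ preserves $\Psi^+ \setminus \{\alpha_i\}$, these "$\neq i$ parts" are genuine subgroups normalized in the right way), apply the $\PGL_2$ formula to the triple $(y, \mathbb{X}_i, x)$ to produce a new triple $(y', \mathbb{X}_i', x')$, leave the remaining $\mathbb{X}_j$ ($j \neq i$) essentially unchanged up to the linear action of $s_i$ on the torus coordinates, and conjugate $u^\pm_{\neq i}$ by $n_{\alpha_i}$ (which stays inside $\mathbf{U}^\mp$ by the root-permutation property). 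This defines a morphism $f_i \colon V_i \to \overline{\Omega}$ satisfying (2) by construction. For (1), I would check that on the locus $\Omega_{\mathbf{G}} \cap V_i$ — i.e. where the $\overline{\mathbf{T}}$-coordinate actually comes from $\mathbf{T}$ via $\nu$, so $\mathbb{X}_i = -\alpha_i(t) \neq 0$ for the relevant factor — the formula agrees with honest conjugation by $n_{\alpha_i}$ in $\mathbf{G}$; this is just the statement that the $\PGL_2$ conjugation formula restricted to the open torus recovers group conjugation, which one verifies by direct substitution. Uniqueness of $f_i$ is then immediate: $\Omega_{\mathbf{G}} \cap V_i$ is schematically dense in $V_i$ (it is the nonvanishing locus of a coordinate in an integral-fibered affine scheme, and one reduces to each point of $S$), and $\overline{\Omega}$ is separated over $S$, so two morphisms agreeing on a schematically dense open subscheme coincide.

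The main obstacle I anticipate is twofold. First, the bookkeeping of signs and structure constants: the Chevalley system fixes the isomorphisms $p_\alpha$, and conjugation by $n_{\alpha_i}$ acts on $U_\alpha$ not just by $\alpha \mapsto s_i(\alpha)$ but with a definite scalar (a product of structure constants / a sign), so one must be careful that these scalars are consistent with the claimed formula and do not obstruct the extension from $\Omega_{\mathbf{G}}$ to all of $V_i$. Second, and more conceptually, justifying that the locus $V_i = D_{\overline{\Omega}}(\mathbb{X}_i + xy)$ is exactly the right domain: one direction (that $f_i$ cannot extend further) can be seen fiberwise from the $\PGL_2$ picture where $\mathbb{X}_i + xy$ is, up to units, the natural "denominator"; the other direction (that the formula is genuinely regular, not merely rational, on all of $V_i$) requires checking that after clearing denominators every output coordinate is a polynomial in the input coordinates and $(\mathbb{X}_i + xy)^{-1}$. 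Both are concrete but error-prone computations; everything else is formal.
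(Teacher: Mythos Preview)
Your proposal is correct and follows essentially the same route as the paper: write down the explicit conjugation formula on root subgroups (using that $\Ad(n_{\alpha_i})$ sends $U_\alpha$ to $U_{s_i(\alpha)}$ with a sign coming from the Chevalley system), handle the rank-one piece $(p_{-\alpha_i}(y),\, t,\, p_{\alpha_i}(x))$ by the $\PGL_2$ computation producing the denominator $D = \mathbb{X}_i + xy$, then verify by direct calculation in $G$ that on $\Omega_G \cap V_i$ this agrees with honest conjugation, and conclude uniqueness from schematic density of $\Omega_G$ in $\overline{\Omega}$.

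Two small points to clean up. First, you wrote that $\Ad(n_{\alpha_i})(u^\pm_{\neq i})$ stays in $\mathbf{U}^\mp$; it stays in $\mathbf{U}^\pm$, since $s_i$ permutes $\Psi^+ \setminus \{\alpha_i\}$. Second, your description of the $\overline{\mathbf{T}}$-part as ``unchanged up to the linear action of $s_i$'' is slightly off: in the paper's formula the middle becomes $\alpha_i^\vee(D)\cdot t$, so the other coordinates $\mathbb{X}_j$ get multiplied by a power of $D$ (depending on $\langle \alpha_j, \alpha_i^\vee\rangle$), not merely permuted. This is exactly what your $\PGL_2$ computation would produce once written out, but it is worth being precise since this is where the denominator $D$ enters all coordinates. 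Finally, your worry about whether $V_i$ is the \emph{maximal} domain of definition is unnecessary: the lemma only asserts existence on $V_i$, not maximality.
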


\begin{proof}
    By \cite[exposé~XXIII, lemme~3.1.1 (iii)]{SGA3III} and the definition of Chevalley system \cite[exposé~XXIII, définition~6.1]{SGA3III}, for each root $\beta\neq \pm \alpha_i\in \Psi$, we have 
    $$\Ad_{n_{\alpha_i}}(p_\beta(x))=p_{s_i(\beta)}(\epsilon_\beta x),$$ 
    where $\epsilon_\beta=\pm 1$ are given in \emph{loc.~cit.} and $x\in \Gamma(S', \mathcal{O}_{S'})$ for a test $S$-scheme $S'$. By the argument after \cite[exposé~XXIII, définition~6.1]{SGA3III}, the above equality holds for $\beta=\pm \alpha_i$ with $\epsilon_\beta=-1$. For a section
    $$u:=\bigg(\prod_{\gamma\in \Psi^-\backslash \left\{-\alpha_i\right\}}p_\gamma(x_\gamma)\cdot p_{-\alpha_i}(x),\;t,\; p_{\alpha_i}(y)\cdot \prod_{\gamma\in \Psi^+ \backslash\left\{\alpha_i\right\}}p_\gamma(x_\gamma)\bigg)\in  V_i(S') \subset\overline{\Omega}(S'),$$
    where $t=(t_1,...,t_l)\in \overline{T}(S')$ and we choose an order on $\Psi^- \backslash\left\{-\alpha_i\right\}$ and $\Psi^+ \backslash\left\{\alpha_i\right\}$, let 
    $$D\coloneq t_i+xy.$$
    Then we define $f_i$ by sending $u$ to 
    \begin{equation}\label{defoff_i}
        \bigg(\prod_{\gamma\in \Psi^- \backslash\left\{-\alpha_i\right\}}p_{s_i(\gamma)}(\epsilon_\gamma x_\gamma)\cdot p_{-\alpha_i}\left(\frac{-y}{D}\right),\; \alpha_i^\vee(D)\cdot t
    ,\; p_{\alpha_i}\left(\frac{-x}{D}\right)\cdot \prod_{\gamma\in \Psi^+ \backslash\left\{\alpha_i\right\}}p_{s_i(\gamma)}(\epsilon_\gamma x_\gamma)\bigg),
    \end{equation}
    where, in the middle, $\alpha_i^\vee(D)$ acts on $t$ via $\nu$, see \Cref{Omegaembedformula}. 
    
    It is clear that $V_i$ contains $U^-$ and $U^+$.
    The verification of the first condition follows from \Cref{Omegaembedformula} and a computation in the group $G$: assume that $t\in T(S')$, then $D=\alpha_i(t)^{-1}+xy$ and 
    \begin{flalign*}
            &\Ad_{n_{\alpha_i}}\bigg(\prod_{\gamma\in \Psi^- \backslash\left\{-\alpha_i\right\}}p_\gamma(x_\gamma)\cdot p_{-\alpha_i}(x)\cdot t\cdot p_{\alpha_i}(y)\cdot \prod_{\gamma\in \Psi^+ \backslash\left\{\alpha_i\right\}}p_{\gamma}(x_\gamma)\bigg)\\
            =&\prod_{\gamma\in \Psi^- \backslash\left\{-\alpha_i\right\}}p_{s_i(\gamma)}(\epsilon_\gamma x_\gamma)\cdot p_{\alpha_i}(-x)\cdot (t\cdot \alpha_i^{\vee}(\alpha_i(t))^{-1}) \cdot p_{-\alpha_i}(-y)\cdot \prod_{\gamma\in \Psi^+ \backslash\left\{\alpha_i\right\}}p_{s_i(\gamma)}(\epsilon_\gamma x_\gamma)\\
             =&\prod_{\gamma\in \Psi^- \backslash\left\{-\alpha_i\right\}}p_{s_i(\gamma)}(\epsilon_\gamma x_\gamma)\cdot p_{\alpha_i}(-x)\cdot p_{-\alpha_i}(-\alpha_i(t)y)\cdot (t\cdot \alpha_i^{\vee}(\alpha_i(t))^{-1}) \cdot \prod_{\gamma\in \Psi^+ \backslash\left\{\alpha_i\right\}}p_{s_i(\gamma)}(\epsilon_\gamma x_\gamma)\\
             =&\prod_{\gamma\in \Psi^- \backslash\left\{-\alpha_i\right\}}p_{s_i(\gamma)}(\epsilon_\gamma x_\gamma)\cdot p_{-\alpha_i}\left(\frac{-y}{D}\right)\cdot \alpha_i^{\vee}(1+\alpha_i(t)xy)\cdot p_{\alpha_i}\left(\frac{-x}{1+\alpha_i(t)xy}\right)\cdot \\&(t\cdot \alpha_i^{\vee}(\alpha_i(t))^{-1}) \cdot \prod_{\gamma\in \Psi^+ \backslash\left\{\alpha_i\right\}}p_{s_i(\gamma)}(\epsilon_\gamma x_\gamma)\\
            =&\prod_{\gamma\in \Psi^- \backslash\left\{-\alpha_i\right\}}p_{s_i(\gamma)}(\epsilon_\gamma x_\gamma)\cdot p_{-\alpha_i}\left(\frac{-y}{D}\right)\cdot (t\cdot \alpha_i^{\vee}(D))
    \cdot p_{\alpha_i}\left(\frac{-x}{D}\right)\cdot \prod_{\gamma\in \Psi^+ \backslash\left\{a\right\}}p_{s_i(\gamma)}(\epsilon_\gamma x_\gamma),
    \end{flalign*} 
    where \cite[exposé~XXII, définitions~1.5 (b)]{SGA3III} is used in the first equality and \cite[exposé~XXII, notations~1.3]{SGA3III} is used in the third equality. The uniqueness follows from (1) and the schematic density of $\Omega_G$ in $\overline{\Omega}$, in particular, the definition of $f_i$ does not depend on the choice of the order on $\Psi^- \backslash\left\{-\alpha_i\right\}$ and $\Psi^+ \backslash\left\{\alpha_i\right\}$ 
\end{proof}

We choose an (not necessarily reduced) expression for the longest element $w_0$ in the Weyl group $W$: $w_0=s_{i_m}\cdot s_{i_{m-1}}\cdot...\cdot s_{i_1}$, where $m\in\mathbb{Z}_{>0}$ is greater or equal to the length of $w_0$, and $1 \leq i_1,..., i_m\leq l$. Set $n_0\coloneq n_{\alpha_{i_m}}\cdot n_{\alpha_{i_{m-1}}}...\cdot n_{\alpha_{i_1}}\in\Norm_G(T)(S)$ with $n_{\alpha_i}\coloneq p_{\alpha_i}(1)p_{-\alpha_i}(-1)p_{\alpha_i}(1)$ for all $1\leq i\leq l$. Then $n_0\in\Norm_G(T)(S)$ is a representative of $w_0$.

\begin{lemma}\label{w_0invariantsub}
    There exist two $S$-rational morphisms
    $$f\colon \overline{\Omega}\dashrightarrow \overline{\Omega}\quad\text{and} \quad f'\colon \overline{\Omega}\dashrightarrow \overline{\Omega}$$
    such that
    \begin{itemize}  
    \item[(1)] if the base scheme $S$ is the spectrum of a strict henselian local ring, there exist $u_0^+\in ~U^+(S)$ and $u_0^-\in U^-(S)$ such that $(u_0^-, (0)_{1\leq i\leq l}, u_0^+)\in \Dom(f)(S)\bigcap f^{-1}(\Dom(f')(S))$;
    
    \item[(2)] $f$ (resp., $f'$) extends the conjugation by $n_0$ (resp., $n_0^{-1}$) on $\Omega_G\bigcap \Dom(f)$ and $\Omega_G\bigcap \Dom(f')$;          
    \end{itemize}
\end{lemma}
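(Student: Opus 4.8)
The plan is to build $f$ and $f'$ by composing the single-reflection rational maps $f_i$ of \Cref{singlereflectrational} along the word $n_0=n_{\alpha_1}\cdots n_{\alpha_l}$, and then to exhibit the section required in part~(1) by restricting everything to the most degenerate stratum of $\overline{\Omega}$, namely the closed subscheme $\Lambda\subseteq\overline{\Omega}$ on which all the coordinates $\mathbb{X}_1,\dots,\mathbb{X}_l$ of $\overline{T}$ vanish; it is isomorphic to $U^-\times U^+$ and, as will be observed, is preserved by each $f_i$. For the construction: since $\Ad(n_0)=\Ad(n_{\alpha_1})\circ\cdots\circ\Ad(n_{\alpha_l})$, I set $f\coloneq f_1\circ f_2\circ\cdots\circ f_l$, a composite of $S$-rational morphisms with $f_l$ applied first. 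For $f'$, recall that $n_{\alpha_i}^2=\alpha_i^\vee(-1)$, which commutes with $n_{\alpha_i}$, so that $n_{\alpha_i}^{-1}=n_{\alpha_i}\,\alpha_i^\vee(-1)$ and hence $\Ad(n_{\alpha_i}^{-1})=\Ad(n_{\alpha_i})\circ c_i$, where $c_i$ is the everywhere-defined automorphism of $\overline{\Omega}$ that is the identity on $\overline{T}$ and scales each root subgroup $U_\beta$ by $(-1)^{\langle\beta,\alpha_i^\vee\rangle}$ (it restricts to conjugation by $\alpha_i^\vee(-1)$ on $\Omega_G$); then put $f_i'\coloneq f_i\circ c_i$ and $f'\coloneq f_l'\circ f_{l-1}'\circ\cdots\circ f_1'$, so that $f'$ realizes $\Ad(n_{\alpha_l}^{-1})\circ\cdots\circ\Ad(n_{\alpha_1}^{-1})=\Ad(n_0^{-1})$. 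That these composites are legitimate $S$-rational morphisms amounts to checking that their domains of definition --- explicit principal opens of the affine $S$-scheme $\overline{\Omega}$ obtained by iteratively intersecting the $V_i$ with preimages of later $V_j$ --- are schematically dense; by transitivity of schematic density this reduces to the schematic density of $\Omega_G$ in $\overline{\Omega}$, the fact that the defining functions of the $V_i$ restrict to non-zero-divisors on $\Omega_G$ at each stage, and the fact that $\Ad(n_{\alpha_i})$ carries $\Omega_G\cap V_i$ into $\Omega_G$ (the last two being visible from formula~\eqref{defoff_i}).

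Part~(2) is then immediate from the construction: on $\Omega_G\cap\Dom(f)$, part~(1) of \Cref{singlereflectrational} gives $f_i=\Ad(n_{\alpha_i})$, and since $\Ad(n_{\alpha_i})$ maps $\Omega_G\cap V_i$ into $\Omega_G$, a section of $\Omega_G\cap\Dom(f)$ stays in $\Omega_G$ after each step and lands in the next $V_j$ by the definition of $\Dom(f)$; composing, $f=\Ad(n_{\alpha_1})\circ\cdots\circ\Ad(n_{\alpha_l})=\Ad(n_0)$ there. The identical argument, with $f_i'=c_i$ followed by $\Ad(n_{\alpha_i})$ (which again preserves $\Omega_G$, since $c_i$ does), gives $f'=\Ad(n_0^{-1})$ on $\Omega_G\cap\Dom(f')$.

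For part~(1), the crucial observation is that each $f_i$ preserves $\Lambda$: by formula~\eqref{defoff_i} the $\overline{T}$-component of $f_i$ is obtained from the input $\overline{T}$-component by multiplying its coordinates by (possibly negative) powers of the function $D$, which is invertible on $V_i$, so it vanishes whenever the input coordinates do. Hence $f$ and $f'$ induce $S$-rational self-maps $f|_\Lambda$ and $f'|_\Lambda$ of $\Lambda\cong U^-\times U^+$, with formation commuting with base change. Over any field, $f_i|_\Lambda$ is birational --- in suitable coordinates it is a signed permutation (dictated by $s_i$) of the $U_\beta$-coordinates with $\beta\notin\{\pm\alpha_i\}$, composed with the involution $x\mapsto -1/x$ in each of the two $\alpha_i$-direction coordinates --- hence so is $f|_\Lambda$; in particular over the residue field $K$ of the local ring $R$ with $S=\Spec R$, the domain of $f|_{\Lambda_K}$ is a non-empty open of the irreducible $\Lambda_K$ which it carries dominantly into $\Lambda_K$, so it meets the non-empty open $\Dom(f'|_{\Lambda_K})$. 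Since $\Dom(f)\cap f^{-1}(\Dom(f'))$ is a principal open $D_{\overline{\Omega}}(g)$ of the affine scheme $\overline{\Omega}$ (a finite intersection of principal opens and preimages thereof), with formation commuting with base change, this says that $D_{\overline{\Omega}}(g)\cap\Lambda$ has non-empty fibre $D_{\Lambda_K}(g|_{\Lambda_K})$ over the closed point of $S$. As $R$ is strict henselian local, $K$ is separably closed, hence infinite, so this non-empty open of the affine space $\Lambda_K$ has a $K$-point; lifting its coordinates arbitrarily to $R$ and keeping the $\overline{T}$-coordinates equal to $0$ yields a point of $\Lambda(R)$ at which $g$ reduces to a unit of $K$, hence is a unit of the local ring $R$. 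This is exactly a section $(u_0^-,(0)_{1\le i\le l},u_0^+)$ of $U^-\times\overline{T}\times U^+$ over $S$ lying in $\Dom(f)\cap f^{-1}(\Dom(f'))$, as desired.

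The load-bearing step is the assertion in part~(1) that $\Dom(f)\cap f^{-1}(\Dom(f'))$ meets the degenerate stratum $\Lambda$; establishing it requires controlling the composite $f_1\circ\cdots\circ f_l$ along $\Lambda$, i.e.\ tracking, through formula~\eqref{defoff_i}, the relabelling of the root subgroups by the simple reflections, the preservation of the vanishing of the $\overline{T}$-coordinates, and the re-orderings of the unipotent factors needed before each successive $f_i$ can be applied --- these re-orderings introduce only commutator terms, polynomial in the coordinates already present, and so leave birationality unaffected. Verifying that all composites in sight are genuine $S$-rational morphisms, i.e.\ have schematically dense domains, is a secondary and routine matter, handled along the way.
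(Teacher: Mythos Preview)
Your proof is correct and follows essentially the same route as the paper. Both build $f$ as the composite $f_1\circ\cdots\circ f_l$ of the single-reflection maps from \Cref{singlereflectrational}, deduce part~(2) directly from \Cref{singlereflectrational}~(1), and establish part~(1) by restricting to the closed stratum $\Lambda=U^-\times\{(0)_i\}\times U^+$, on which each $f_i$ becomes the explicit birational self-map you describe (exactly the $\underline{f_i}$ of the paper).

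The differences are cosmetic. For $f'$, the paper uses the identity $n_{\alpha_i}^{-1}=n_{\alpha_i}^3$ (from $n_{\alpha_i}^4=e$) and hence builds $f'$ from triple iterates of the $f_i$, whereas you use $n_{\alpha_i}^{-1}=n_{\alpha_i}\cdot\alpha_i^\vee(-1)$ and insert the everywhere-defined torus twist $c_i$; both recipes extend $\Ad(n_0^{-1})$. For the section in part~(1), the paper invokes \cite[\S2.3, Proposition~5]{BLR} to produce an $S$-point of the fiberwise-dense open of $\Lambda$, while you argue directly by lifting a $K$-point of the (non-empty) closed fiber to $R$ via the affine-space coordinates; these are equivalent here.

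One small terminological point: when you write ``$\Dom(f)\cap f^{-1}(\Dom(f'))$ is a principal open $D_{\overline{\Omega}}(g)$ \dots\ with formation commuting with base change'', you are tacitly working with the explicit iterated-preimage open $V_l\cap f_l^{-1}(V_{l-1})\cap\cdots$ (and its analogue for $f'$), not with the maximal definition domain in the sense of \cite[exposé~XVIII, définition~1.5.1]{SGA3II}. The maximal domain need not be principal nor commute with arbitrary base change. Since your argument only needs the explicit open (which is principal, base-change stable, and contained in the maximal domain), this is harmless, but it is worth saying so explicitly to avoid a clash with the paper's convention for $\Dom(\,\cdot\,)$.
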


\begin{proof}
    We set
    $f=f_{i_m}...f_{i_1}$ where $f_i$ is defined in \Cref{singlereflectrational} for any $1\leq i\leq l$ and is viewed as an $S$-rational automorphism of $\overline{\Omega}$. 
    Note that $n_0^{-1}=n_{\alpha_{i_1}}^3\cdot n_{\alpha_{i_{2}}}^3\cdot...\cdot n_{\alpha_{i_m}}^3$ because of the equalities $n_{\alpha_i}^4=e$ in \cite[exposé~XX, théorème~3.1 (v)]{SGA3III}. Hence we define $f'$ in a similar way. 

    For each simple root $\alpha_i\in\Delta$, we fix an enumeration of $\Psi$ and the coordinates $x$ and $y$ of $U_{-\alpha_{i}}$ and $U_{\alpha_{i}}$ as in \Cref{singlereflectrational}.
    Let $\underline{V_j}\coloneq D_{U^-\times U^+}(xy)\subset U^-\times U^+ $. Then, for all $1\leq j\leq l$, after restricting to $U^-\times (0)_{1\leq i\leq l}\times U^+$ which is identified with $U^-\times U^+$, the $f_j$ in \Cref{singlereflectrational} becomes
    an isomorphism
    \begin{equation}
    \underline{f_j}\colon \underline{V_j}\longrightarrow \underline{V_j}
    \end{equation}
which sends
$$\bigg(\prod_{\gamma\in \Psi^- \backslash\left\{-\alpha_j\right\}}p_\gamma(x_\gamma)\cdot p_{-\alpha_j}(x),\; p_{\alpha_j}(y)\cdot \prod_{\gamma\in \Psi^+ \backslash\left\{\alpha_j\right\}}p_\gamma(x_\gamma)\bigg)\in  \underline{V_j}(S')$$
to 
$$ \bigg(\prod_{\gamma\in \Psi^- \backslash\left\{-\alpha_j\right\}}p_{s_j(\gamma)}(\epsilon_\gamma x_\gamma)\cdot p_{-\alpha_j}\left(\frac{-1}{x}\right),\; p_{\alpha_j}\left(\frac{-1}{y}\right)\cdot \prod_{\gamma\in \Psi^+ \backslash\left\{\alpha_j\right\}}p_{s_j(\gamma)}(\epsilon_\gamma x_\gamma)\bigg).$$
By intersecting $\underline{V_{i_m}}$ with $\underline{V_{i_{m-1}}}$ and taking the inverse image along the isomorphism $\underline{f_{i_{m-1}}}$, we get an $S$-dense open subscheme of $\underline{V_{i_{m-1}}}$. By further intersecting the open subscheme with $\underline{V_{i_{m-2}}}$ and taking the inverse image along $\underline{f_{i_{m-2}}}$, we get an $S$-dense open subscheme of $\underline{V_{i_{m-2}}}$. Continuing this process, after finitely many steps, we end up with an $S$-dense open subscheme $\underline{V_0}\subset \underline{V_{i_1}}$.
Hence, if we take $(u_0^-,u_0^+)$ to be an $S$-rational point in $\underline{V_0}$, then $(u_0^-,(0)_{1\leq i\leq l},u_0^+)\in \Dom(f)(S)$. Under the assumption of (1), the existence of such a section $(u_0^-,u_0^+)$ is ensured by \cite[\S~2.3, Proposition~5]{BLR}. By a similar argument and further shrinking, we can also arrange that $(u_0^-,(0)_{1\leq i\leq l},u_0^+)\in f^{-1}(\Dom(f')(S))$. Then (1) follows. The claim (2) follows from \Cref{singlereflectrational} (1).
\end{proof}

The $f$ and $f'$ in \Cref{w_0invariantsub} is used to swap the positive and the negative root subgroups through the middle $\overline{T}$, which is an important operation in the following result, even if it is merely defined over an open subscheme.

\begin{lemma}\label{extensionofmultiplication}
    Consider the $S$-rational morphism 
    $$\Theta: U^+\times \overline{T}\times U^-\dashrightarrow \overline{\Omega}$$
    given by the group multiplication $U^+\times T\times U^-\longrightarrow G$ and the open immersion $\nu$ \Cref{Omegaembedformula} where $\Omega_G$ is identified with the big cell of $G$ via the group multiplication. Then $\Dom(\Theta)$ contains $e\times \overline{T}\times e$.
\end{lemma}

\begin{proof} For simplicity, we denote $(0)_{1\leq i\leq l}\in \overline{T}(S)$ by $0$.

    First of all, by flat descent \cite[exposé~XVIII, proposition~1.6]{SGA3II} and a limit argument, we can assume that the base $S$ is the spectrum of a strictly henselian local ring.
    Choose a test $S$-scheme $S'$.
    Note that by the definition of $\Theta$, $\Dom(\Theta)$ already contains $e\times T\times e$, where $e\in G(S)$ is the identity section. Hence it suffices to show that $e\times (t_i)_{1\leq i\leq l}\times e\subset \Dom(\Theta)$ for $(t_i)_{1\leq i\leq l}\in \overline{T}(S')\backslash T(S')$. For this, we will first construct an $S$-rational morphism $\Theta':U^+\times \overline{T}\times U^-\dashrightarrow \overline{\Omega}$ such that $(e,0,e) \subset \Dom(\Theta')$ and $\Theta=\Theta'$ as $S$-rational morphisms.

    We fix a section $(u_0^-, 0, u_0^+)\in \overline{\Omega}(S)$ as in \Cref{w_0invariantsub} ~(1).
    For any test $S$-scheme $S'$ and a section $(u^+, \overline{t}, u^-)\in (U^+\times \overline{T}\times U^-)(S')$, we define $\Theta'(u^+, \overline{t}, u^-)$ step by step and along the way we shrink the definition domain of $\Theta'$. 

 
    As the \emph{first step}, by restricting to nonempty open subschemes of $U^+$ and $U^-$, we can have 
    $$u^+(u_0^-)^{-1}=(\dot{u}^-,\dot{t}, \dot{u}^+)\in \Omega_G(S'),$$
    $$(u_0^+)^{-1}u^-=(\dot{v}^-,\dot{s}, \dot{v}^+)\in \Omega_G(S').$$

    
    As the \emph{second step}, we  let  
    $$f(\dot{t}u_0^-\dot{t}^{-1}, \dot{t}\overline{t}\dot{s}, \dot{s}^{-1}u_0^+\dot{s})=(\widetilde{u}^-, \widetilde{t}, \widetilde{u}^+).$$
    We further restrict to an open subscheme $D'\subset U^+\times\overline{T}\times U^-$ so that the section $$f'((n_0(\dot{t}\dot{u}^+\dot{t}^{-1})n_0^{-1})\widetilde{u}^-,\;\widetilde{t},\;\widetilde{u}^+(n_0(\dot{s}^{-1}\dot{v}^-\dot{s})n_0^{-1})) \in\overline{\Omega}(S')$$
is well-defined, where $n_0$ is defined in \Cref{w_0invariantsub}, and we denote this section by 
$(\widetilde{u}^-_0,\widetilde{t}_0,\widetilde{v}^+_0).$
    
    
    As the \emph{third step}, we define
    $$\Theta'(u^+, \overline{t}, u^-)=(\dot{u}^-\widetilde{u}^-_0,\widetilde{t}_0,\widetilde{v}^+_0\dot{v}^+)\in \overline{\Omega}(S').$$

    The fact that $\Theta'\vert_{U^+\times T\times U^-}$ coincides with the group multiplication of $G$ follows from the following computation in the group $G$ when $\overline{t}\in T(S')$:
    \begin{equation*}
       \begin{aligned}
        \Theta'(u^+, \overline{t}, u^-) 
        &=\dot{u}^-\widetilde{u}^-_0\widetilde{t}_0\widetilde{v}^+_0\dot{v}^+\\
        &= \dot{u}^-f'((n_0(\dot{t}\dot{u}^+\dot{t}^{-1})n_0^{-1})\widetilde{u}^-,\;\widetilde{t},\;\widetilde{u}^+(n_0(\dot{s}^{-1}\dot{v}^-\dot{s})n_0^{-1}))\dot{v}^+\\
        &= \dot{u}^- (\dot{t}\dot{u}^+\dot{t}^{-1})n_0^{-1}\widetilde{u}^-\widetilde{t}\widetilde{u}^+n_0(\dot{s}^{-1}\dot{v}^-\dot{s})  \dot{v}^+\\
        &= \dot{u}^- (\dot{t}\dot{u}^+\dot{t}^{-1})n_0^{-1}f(\dot{t}u_0^-\dot{t}^{-1}, \dot{t}\overline{t}\dot{s}, \dot{s}^{-1}u_0^+\dot{s})n_0(\dot{s}^{-1}\dot{v}^-\dot{s})  \dot{v}^+\\
        &= \dot{u}^- (\dot{t}\dot{u}^+\dot{t}^{-1})\dot{t}u_0^-\dot{t}^{-1}\dot{t}\overline{t}\dot{s} \dot{s}^{-1}u_0^+\dot{s}(\dot{s}^{-1}\dot{v}^-\dot{s})  \dot{v}^+\\
        &= \dot{u}^- \dot{t}\dot{u}^+u_0^-\overline{t}u_0^+\dot{v}^-\dot{s}\dot{v}^+
        =u^+\overline{t}u^-,
       \end{aligned}
    \end{equation*}
    where \Cref{w_0invariantsub}~(2) is used in the third, fifth equalities. 
    Hence we have $\Theta'=\Theta$ as $S$-rational morphisms. By the choice of $u_0^-$ and $u_0^+$ and \Cref{w_0invariantsub}~(1), we have $$ (e,0,e)\in D'\subset \Dom(\Theta).$$

    Now, for any $S$-scheme $S'$ and $t\in T(S')$, we consider the following diagram of $S'$-rational morphisms
    
\begin{equation}\label{diagramtheta}
\begin{gathered}
    \xymatrix{
U^+_{S'}\times \overline{T}_{S'}\times U^-_{S'}\ar@{-->}[r]^-{\Theta_{S'}}\ar[d]^{M(t)} & \overline{\Omega}_{S'}\ar[d]^{M'(t)} &\\
U^+_{S'}\times \overline{T}_{S'}\times U^-_{S'}\ar@{-->}[r]^-{\Theta_{S'}} &  \overline{\Omega}_{S'}, & }
\end{gathered}
\end{equation}
where $M(t)$ sends a section $(v^+, s, v^-)\in (U^+\times \overline{T}\times U^-)(S')$ to $(tv^+t^{-1},ts, v^-)$ and $M'(t)$ sends a section $(u^-, w, u^+)\in\overline{\Omega}(S')$ to $(tu^-t^{-1}, tw, u^+)$. The above diagram is commutative because so is it after intersecting with $G_{S'}$. Therefore $\Dom(\Theta_{S'})$ is stable under the $S'$-endomorphism $M(t)$ for any $t\in T(S')$. Combining with $(e,0,e)\in \Dom(\Theta_{S'})$, we conclude $e\times \overline{T}\times e\subset \Dom(\Theta)$. 
\end{proof}

Now we are in the position to define an $S$-rational action of $G\times G$ on the big cell $\overline{\Omega}$.
The following theorem is, in spirit, similar to \cite[exposé~XXV, proposition~2.9]{SGA3III}, and we base our proof on theirs.

\begin{theorem}\label{theoremrationalaction}
    There exists a unique $S$-rational morphism $\pi\colon G\times\overline{\Omega}\times G\dashrightarrow \overline{\Omega}$ whose definition domain $\Dom(\pi)$ contains $e\times\overline{\Omega}\times e$ 
    such that 

    (1) The restriction of $\pi$ to the open subscheme $(G\times\Omega_G\times G)\bigcap\Dom(\pi)$ is given by 
$$(g_1,g,g_2)\longmapsto g_1gg_2^{-1}.$$
        
    (2) For every $x\in S$, the geometric fiber $\pi_{\overline{k(x)}}$ agrees with the restriction of the rational action of $G_{\overline{k(x)}}\times G_{\overline{k(x)}}$ on the big cell $\overline{\Omega}_{\overline{k(x)}}$ of the wonderful compactification of $G_{\overline{k(x)}}$ over the algebraically closed field $\overline{k(x)}$.

    (3) The restriction $\pi\vert_{e\times\overline{\Omega}\times e}$ is the projection onto the middle factor. 
\end{theorem}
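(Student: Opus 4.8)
The plan is to write down $\pi$ by hand on an explicit open neighbourhood of $\{e\}\times\overline{\Omega}\times\{e\}$, following the multiply-and-reassemble strategy of \cite[exposé~XXV, proposition~2.9]{SGA3III}, and then to extract (2), (3) and uniqueness from the schematic density of $\Omega_G$ in $\overline{\Omega}$ and the separatedness of $\overline{\Omega}$. As at the start of the proof of \Cref{extensionofmultiplication}, flat descent \cite[exposé~XVIII, proposition~1.6]{SGA3II} together with a limit argument reduces us to the case where $S$ is the spectrum of a strictly henselian local ring.

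Let $S'$ be a test $S$-scheme and $(g_1,(u^-,\bar t,u^+),g_2)\in(G\times\overline{\Omega}\times G)(S')$. Since $e\cdot u^-=u^-\in\Omega_G$ and $u^+\cdot e=u^+\in\Omega_G$, the conditions ``$g_1u^-$ lies in $\Omega_G$'' and ``$u^+g_2^{-1}$ lies in $\Omega_G$'' cut out an open subscheme of $G\times\overline{\Omega}\times G$ containing $\{e\}\times\overline{\Omega}\times\{e\}$, over which, writing $\Omega_G=U^-\times T\times U^+$ as in \Cref{Omegaembedformula}, we may decompose $g_1u^-=p^-p^0p^+$ and $u^+g_2^{-1}=q^-q^0q^+$ with $p^{\pm},q^{\pm}$ sections of $U^{\pm}$ and $p^0,q^0$ sections of $T$. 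A formal computation in $G$ shows that, when $\bar t\in T$, one has $g_1(u^-\bar tu^+)g_2^{-1}=p^-\bigl(\tilde p^+\bar s\tilde q^-\bigr)q^+$, where $\tilde p^+:=p^0p^+(p^0)^{-1}\in U^+$, $\tilde q^-:=(q^0)^{-1}q^-q^0\in U^-$, and $\bar s:=p^0\bar tq^0\in\overline{T}$ denotes $\bar t$ acted on by $(p^0,q^0)$ under the natural $(T\times T)$-action on $\overline{T}$ (so $\bar s=\nu(p^0tq^0)$ when $\bar t=\nu(t)$). This suggests the definition of $\pi$ for arbitrary $\bar t$: the factor $\tilde p^+\bar s\tilde q^-$ lies in $U^+\times\overline{T}\times U^-$, and since $\Dom(\Theta)\supseteq\{e\}\times\overline{T}\times\{e\}$ by \Cref{extensionofmultiplication} — while $\tilde p^+=\tilde q^-=e$ and $\bar s=\bar t$ at $g_1=g_2=e$ — after one further shrinking we may set $\Theta(\tilde p^+,\bar s,\tilde q^-)=(\rho^-,\rho^0,\rho^+)\in\overline{\Omega}$ and put
\[
\pi\bigl(g_1,(u^-,\bar t,u^+),g_2\bigr):=\bigl(p^-\rho^-,\,\rho^0,\,\rho^+q^+\bigr)\in\overline{\Omega}.
\]
It is $\Theta$ that does the real work here: its construction in \Cref{extensionofmultiplication} already performs, through the maps $f$ and $f'$ of \Cref{w_0invariantsub}, the crucial swap of a positive and a negative root subgroup through $\overline{T}$.

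Specialising to $g_1=g_2=e$ gives $p^-=u^-$, $p^0=p^+=e$, $q^+=u^+$, $q^-=q^0=e$, hence $\bar s=\bar t$ and $\Theta(e,\bar t,e)=(e,\bar t,e)$, so $\pi(e,x,e)=x$; this both places $\{e\}\times\overline{\Omega}\times\{e\}$ inside $\Dom(\pi)$ and proves (3). For (1): over $\Dom(\pi)\cap(G\times\Omega_G\times G)$ one has $\bar t\in T$, all the products above take place inside $G$, and $\Theta$ is the honest multiplication, so unwinding the definition yields $\pi(g_1,g,g_2)=p^-\bigl(\tilde p^+\bar s\tilde q^-\bigr)q^+=(g_1u^-)\bar t(u^+g_2^{-1})=g_1gg_2^{-1}$, which is (1). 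Uniqueness then follows because $G\times\Omega_G\times G$ is schematically dense in $G\times\overline{\Omega}\times G$ and $\overline{\Omega}$ is separated: any $S$-rational morphism satisfying (1) agrees with $\pi$ on a dense open and hence everywhere. Finally (2) is the same density principle applied fibrewise: the construction of $\pi$ commutes with base change, so $\pi_{\overline{k(x)}}$ is the analogous rational morphism over $\overline{k(x)}$; on the other hand, over the algebraically closed field $\overline{k(x)}$ the $(G\times G)$-action on the wonderful compactification $X$ of $G_{\overline{k(x)}}$, restricted to the locus on which it carries the big cell $\overline{\Omega}$ — identified with $U^-\times\overline{T}\times U^+$ as recalled in \Cref{wonderfulcomp} — into $\overline{\Omega}$, is again a rational morphism $G\times\overline{\Omega}\times G\dashrightarrow\overline{\Omega}$ restricting to $(g_1,g,g_2)\mapsto g_1gg_2^{-1}$ on $G\times\Omega_G\times G$; density and separatedness force it to coincide with $\pi_{\overline{k(x)}}$, which is exactly (2).

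The main obstacle is the verification of (1): one must carry out the group computation above with care — it is the analogue, now with $g_1$ and $g_2$ present, of the long computation in the proof of \Cref{extensionofmultiplication}, the simplification here being that $\Theta$ enters only through its defining property of extending the multiplication of $G$. What remains is bookkeeping rather than mathematics: one checks that the two loci ``$g_1u^-\in\Omega_G$'' and ``$u^+g_2^{-1}\in\Omega_G$'' and the locus where $(\tilde p^+,\bar s,\tilde q^-)\in\Dom(\Theta)$ meet in an open subscheme still containing $\{e\}\times\overline{\Omega}\times\{e\}$, so that $\Dom(\pi)$ has the asserted property; and, for (2), one uses the recollections of \Cref{wonderfulcomp} to identify the abstract $\overline{\Omega}=U^-\times\overline{T}\times U^+$ with the genuine big cell of $X$.
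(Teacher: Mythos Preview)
Your proof is correct and follows the same approach as the paper: reassemble the outer factors around $\overline{T}$ and invoke $\Theta$ from \Cref{extensionofmultiplication} for the critical swap of $U^+$ past $U^-$ through the middle. Your presentation is in fact slightly more streamlined---you multiply $g_1u^-$ and $u^+g_2^{-1}$ directly in $G$ and then decompose in $\Omega_G$, whereas the paper first restricts $g_1,g_2$ to $\Omega_G$, decomposes them, and separately inverts $g_2$ before performing the analogous exchanges---but the underlying idea and the role of $\Theta$ are identical (and the reduction to strictly henselian $S$ you make at the outset is harmless but unnecessary, since your construction works over any base).
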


\begin{remark}
    In the case when $S$ is the spectrum of an algebraically closed field, if we assume the existence of the classical wonderful compactification, the desired $S$-rational morphism $\pi$ in \Cref{theoremrationalaction} exists simply as a restriction of the group action morphism of the wonderful compactification of $G$.  
\end{remark}

\begin{proof}
    Note that (2) follows from (1) by the $S$-density of $\Omega_G$ in $\overline{\Omega}$. The \emph{uniqueness} of $\pi$ also follows from this reason and (1).

    For the \emph{existence} of $\pi$, since $\Omega_G$ is $S$-dense in $G$, it suffices to define an $S$-rational morphism $$\pi\colon\Omega_G\times\overline{\Omega}\times\Omega_G\dashrightarrow \overline{\Omega}$$
    which is defined over an open subscheme $\mathcal{R}$ containing $e\times\overline{\Omega}\times e$.
    We will define our $\pi$ step by step and shrink $\Omega_G\times\overline{\Omega}\times\Omega_G$ along the way to obtain the sought $\mathcal{R}$. 
    Let $S'$ be a test $S$-scheme.
    Consider an element
    $$A:=((u_1^-,t_1, u_1^+),(u^-,t, u^+),(u_2^-,t_2, u_2^+))\in (\Omega_G\times\overline{\Omega}\times\Omega_G)(S').$$
    
    
    As the \emph{first step}, we let 
    $$V\coloneq\sigma^{-1}(\Omega_G)\bigcap \Omega_G$$
    where $\sigma$ is the inverse morphism of the group scheme $G$. Then we define an $S$-dense open subscheme 
    $$\mathcal{R}_1\coloneq\Omega_G\times\overline{\Omega}\times  V\subset\Omega_G\times\overline{\Omega}\times\Omega_G.$$
    If $A\in \mathcal{R}_1(S')$, we write 
    $$\sigma(u_2^-,t_2, u_2^+)=(\hat{u}_2^-,\hat{t}_2, \hat{u}_2^+)\in \Omega_G(S').$$
    
    As the \emph{second step}, we define 
    the $S$-dense open subscheme $\mathcal{R}_2\subset\mathcal{R}_1$ given by the following condition:
    $$A\in \mathcal{R}_2\iff
    u_1^+u^-\, \text{and}\,u^+\hat{u}_2^-\, \text{lie in}\; \Omega_G(S').$$
    If so, we write 
    \begin{equation}\label{comutequation1}
        u_1^+u^-=(\dot{u}_1^-, \dot{t}_1, \dot{u}^+)\in\Omega_G(S')
    \end{equation} 
    and 
    \begin{equation}\label{comutequation2}
        u^+\hat{u}_2^-=(\dot{u}^- ,\dot{t}_2 ,\dot{u}_2^+)\in\Omega_G(S').
    \end{equation}


   As the \emph{third step}, we further restrict to the open subscheme $\mathcal{R}\subset\mathcal{R}_2$ defined by the following:
    $$A\in \mathcal{R}\iff(\dot{t}_1\dot{u}^+\dot{t}_1^{-1},\dot{t}_1 t_0\dot{t}_2, \dot{t}_2^{-1}\dot{u}^-\dot{t}_2)\in \Dom(\Theta)(S')$$
    where $\Theta$ is defined in \Cref{extensionofmultiplication}. The open subscheme $\mathcal{R}$ is $S$-dense because, by \Cref{extensionofmultiplication}, $e\times \overline{T}\times e\subset \Dom(\Theta)$.
    If $A\in\mathcal{R}(S')$, we write 
    \begin{equation}\label{comutequation3}    \Theta(\dot{t}_1\dot{u}^+\dot{t}_1^{-1},\dot{t}_1 t\dot{t}_2,\dot{t}_2^{-1}\dot{u}^-\dot{t}_2)=(\Ddot{u}^-, \Ddot{t}, \Ddot{u}^+)\in\overline{\Omega}(S').
    \end{equation}
    Now we have all components of $\pi(A)$ on $U^-$, $U^+$ and $\overline{T}$ in the ``right'' places. Hence 
    we define the image of $A$ under $\pi$ to be 
    \begin{equation}\label{definitionofpi}
        (u_1^- (t_1\Dot{u}_1^- t_1^{-1})(t_1\Ddot{u}^-t_1^{-1}),t_1\Ddot{t} \hat{t}_2,(\hat{t}_2^{-1}\Ddot{u}^+\hat{t}_2)(\hat{t}_2^{-1}\dot{u}_2^+\hat{t}_2)\hat{u}_2^+)\in \overline{\Omega}(S').
    \end{equation}
     Note that, when $(u^-_1, t_1,u^+_1)=e$ and $(u^-_2, t_2,u^+_2)=e$, we have that $\dot{u}^+=e$ and $\dot{u}^-=e$. Then, by \Cref{extensionofmultiplication} (1), we conclude that $\mathcal{R}$ contains $e\times \overline{\Omega}\times e$. The claim (4) follows from the definition of $\pi$ and $\mathcal{R}$ above and \Cref{extensionofmultiplication} (2).

    To show the claim (1), according to the definition of $\pi$, if $A\in(\Omega_G\times\Omega_G\times\Omega_G)(S')\bigcap\mathcal{R}(S')$, we have that
    \begin{equation}\label{comutequation4}
        \pi(A)=u_1^-t_1\dot{u}_1^-\Ddot{u}\Ddot{t}\Ddot{u}^+\dot{u}_2^+\hat{t}_2\hat{u}_2^+
    \end{equation}
    holds in $G(S')$.
    Notice that \Cref{comutequation1} -- \eqref{comutequation3} give rise to 
    $$u_1^+u^-=\dot{u}_1^-\dot{t}_1\dot{u}^+,\;\;u^+\hat{u}_2^-=\dot{u}^-\dot{t}_2 \dot{u}_2^+\;\;$$
    $$\;\;\dot{t}_1\dot{u}^+\dot{t}_1^{-1}\dot{t}_1t\dot{t}_2\dot{t}_2^{-1}\dot{u}^-\dot{t}_2=\Ddot{u}^- \Ddot{t}\Ddot{u}^+$$
    in $G(S')$,
    where \Cref{extensionofmultiplication} is used to deduce the last equation. 
    Also since $\sigma$ is the inverse operation, combining with the first step, we have 
    $$\hat{u}_2^-\hat{t}_2\hat{u}_2^+=(u_2^-t_2u_2^+)^{-1}.$$
    Substituting the above four formulas into  \Cref{comutequation4} in turn, after a computation, we have 
    $$\pi(A)=(u_1^-t_1 u_1^+)(u^-tu^+)(u_2^-t_2u_2^+)^{-1},$$
    as desired. The claim (3) follows from the definition of $\pi$.
\end{proof}

\subsection{Properties of the rational action}
We are going to show several properties of $\pi$. These properties are analogues of the conditions which are imposed on a birational group law to ensure the existence of a group scheme solution to the group law.

The following lemma says that the rational action $\pi$ of $ G\times G$ on $\overline{\Omega}$ defined in  \Cref{theoremrationalaction} is associative.

\begin{lemma}\label{associativity}
    The following two $S$-rational morphisms
    $$\phi\colon(G\times G)\times(G\times\overline{\Omega}\times G)\dashrightarrow \overline{\Omega},$$
    $$((g_1,g_2),(g_1', x, g_2'))\mapsto\pi(g_1, \pi(g_1', x, g_2'),g_2)$$
    and 
    $$\psi\colon(G\times G)\times(G\times\overline{\Omega}\times G)\dashrightarrow \overline{\Omega},$$
    $$((g_1,g_2),(g_1', x, g_2'))\mapsto \pi(g_1g_1', x, g_2 g_2')$$
    coincide.
\end{lemma}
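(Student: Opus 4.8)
The plan is to reduce the equality of the two rational morphisms $\phi$ and $\psi$ to an equality of sections of $\overline{\Omega}$, and then to verify the latter by passing to the schematically dense open subscheme where everything has ``group nature''. First I would observe that, since $\Omega_G$ is $S$-fiberwise dense in $G$ and $\Omega_G$ (viewed inside $\overline{\Omega}$ via $\nu$) is $S$-fiberwise dense in $\overline{\Omega}$, both $\phi$ and $\psi$ are determined by their restrictions to the locally closed subscheme $(G\times G)\times(\Omega_G\times\Omega_G\times\Omega_G)$, and in turn it suffices to compare them on the fiberwise dense open subscheme of this where all the relevant instances of $\pi$ are defined. The content of \Cref{theoremrationalaction}~(1) is that on such sections $\pi$ is literally the morphism $(h_1,h,h_2)\mapsto h_1 h h_2^{-1}$ computed inside $G$. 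Hence on the common locus of definition we get $\phi((g_1,g_2),(g_1',x,g_2')) = g_1(g_1' x g_2'^{-1})g_2^{-1}$ and $\psi((g_1,g_2),(g_1',x,g_2')) = (g_1 g_1') x (g_2 g_2')^{-1}$, and these agree by associativity of multiplication and the identity $(g_2 g_2')^{-1} = g_2'^{-1} g_2^{-1}$ in the group $G$.

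The one genuine subtlety is to make sure the two rational morphisms actually have a \emph{common} fiberwise dense open locus of definition, so that the computation above takes place over a scheme on which both sides are honestly defined; a priori $\Dom(\phi)$ and $\Dom(\psi)$ could be disjoint from the ``group locus''. Here is where I would use the structure of the construction. For $\psi$: by \Cref{theoremrationalaction} its definition domain contains $(G\times G)\times(\{e\}\times\overline{\Omega}\times\{e\})$ after composing with the multiplication $G\times G\to G$ in the outer factors, and pulling back along the multiplication maps (which are faithfully flat, indeed smooth surjective) preserves fiberwise density of open subschemes, so $\Dom(\psi)$ is fiberwise dense and meets the group locus. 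For $\phi$: the inner $\pi(g_1',x,g_2')$ is defined on a fiberwise dense open $\mathcal{R}$, its restriction lands in $\overline{\Omega}$, and then we require the resulting section, paired with $(g_1,g_2)$, to lie in $\Dom(\pi)$; since $\Dom(\pi)\supset\{e\}\times\overline{\Omega}\times\{e\}$ and the outer action is by the fiberwise dense $\Omega_G$-locus in each $G$-factor, a chase as in the proof of \Cref{theoremrationalaction} shows $\Dom(\phi)$ is fiberwise dense as well. The intersection $\Dom(\phi)\cap\Dom(\psi)$ is then a fiberwise dense open subscheme of the irreducible-fibered scheme $(G\times G)\times(G\times\overline{\Omega}\times G)$, hence meets the fiberwise dense open subscheme $(G\times G)\times(\Omega_G\times\Omega_G\times\Omega_G)$ in a fiberwise dense open subscheme; on this common locus the group computation applies.

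Finally, to conclude that $\phi=\psi$ as $S$-rational morphisms, I would invoke that $\overline{\Omega}$ is separated over $S$ together with the fact that two $S$-morphisms from a reduced scheme to a separated $S$-scheme that agree on a schematically dense open subscheme are equal; the fiberwise dense open locus found above is schematically dense because $G\times\overline{\Omega}\times G$ is flat over $S$ with reduced (indeed geometrically integral) fibers, and likewise for $(G\times G)\times(G\times\overline{\Omega}\times G)$. The main obstacle, as indicated, is purely the bookkeeping that produces a single fiberwise dense open subscheme over which \emph{both} $\phi$ and $\psi$ restrict to the honest triple-product morphism of $G$; once that is in place the associativity identity is just associativity in the group $G$ and carries no further content.
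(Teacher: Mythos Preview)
Your proposal is correct and follows essentially the same approach as the paper. The paper's proof is a single sentence invoking \Cref{theoremrationalaction}~(1) to conclude that $\phi$ and $\psi$ agree on an $S$-fiberwise dense open subscheme; you have simply unpacked the bookkeeping (that $\Dom(\phi)\cap\Dom(\psi)$ meets the group locus $(G\times G)\times(G\times\Omega_G\times G)$ in a fiberwise dense open, and that on this locus both sides compute the same group-theoretic expression) which the paper leaves implicit.
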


\begin{proof}
    By (1) of \Cref{theoremrationalaction}, $\phi$ and $\psi$ coincide over an $S$-dense open subscheme.
\end{proof}

The $S$-rational morphism $\pi$ in \Cref{theoremrationalaction} defines a (relative) rational morphism in the sense of \cite[exposé~XVIII, définition~1.5]{SGA3II}, which can be viewed as a rational action of $ G\times G$ on $\overline{\Omega}$. In the following lemma, we prove that this rational action is ``birational'' and ``strict'', which is an analogue of the notion of a strict birational group law, see \cite[Chapter~5, Section~5.2, Definition~1]{BLR}. We adopt the strategy of the proof of \cite[Section~6, Proposition~6.3.13]{redctiveconrad} into our context.

\begin{lemma}\label{strictness}
    The $S$-rational morphisms
    $$\Phi\colon  G\times \overline{\Omega}\times G\dashrightarrow
    \ G\times \overline{\Omega}\times G,$$
    $$(g_1, x, g_2)\mapsto(g_1, \pi(g_1, x, g_2), g_2),$$
    $$\Psi\colon  G\times \overline{\Omega}\times G\dashrightarrow
     G\times \overline{\Omega}\times G,$$
    $$(g_1, x, g_2)\mapsto(g_1, \pi(g_1^{-1}, x, g_2^{-1}), g_2)$$
    are birational. Moreover, there exists an $\overline{\Omega}$-dense open subscheme $\mathcal{V}\subset \Dom(\Phi)$ containing $\{e\}\times\overline{\Omega}\times\{e\}$ such that the restriction $\Phi\vert_{\mathcal{V}}$ is an open immersion into $ \Dom(\Psi)$.
\end{lemma}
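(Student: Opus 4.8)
The plan is to verify that $\Phi$ and $\Psi$ are mutually inverse rational morphisms, and then to extract the desired $\overline{\Omega}$-dense open $\mathcal{V}$ on which $\Phi$ becomes an open immersion. First I would check that $\Psi\circ\Phi$ and $\Phi\circ\Psi$ agree with the identity as $S$-rational morphisms. For this, note that both composites are $S$-rational self-maps of $G\times\overline{\Omega}\times G$ that are the identity on the first and third factors, so it suffices to compare the middle $\overline{\Omega}$-component. On the $\overline{\Omega}$-dense open subscheme $(G\times\Omega_G\times G)\cap\Dom(\Phi)\cap\Phi^{-1}(\Dom(\Psi))$ the associativity \Cref{associativity} together with \Cref{theoremrationalaction}~(1) gives $\pi(g_1^{-1},\pi(g_1,x,g_2),g_2^{-1})=\pi(g_1^{-1}g_1,x,g_2^{-1}g_2)=\pi(e,x,e)=x$ by \Cref{theoremrationalaction}~(3); the other composite is symmetric. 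Since $\Omega_G$ is $S$-fiberwise dense in $G$ and $\Omega_G\subset\overline{\Omega}$ is $S$-fiberwise dense, this open subscheme is $S$-fiberwise (hence schematically) dense, so the identity holds as $S$-rational morphisms. Consequently $\Phi$ and $\Psi$ are birational, each being the other's rational inverse.

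Next I would produce the open $\mathcal{V}$. Working Zariski-locally and, after a flat-descent and limit reduction as in the proof of \Cref{extensionofmultiplication}, over a strictly henselian local base, I set $\mathcal{V}_0:=\Dom(\Phi)\cap\Phi^{-1}(\Dom(\Psi))$; by \Cref{theoremrationalaction}~(3) and the fact that $\Dom(\pi)\supset\{e\}\times\overline{\Omega}\times\{e\}$ we have $\{e\}\times\overline{\Omega}\times\{e\}\subset\mathcal{V}_0$, and $\mathcal{V}_0$ is $\overline{\Omega}$-dense in $G\times\overline{\Omega}\times G$ because it is so over each geometric fiber (the complement of $\Dom(\pi)$ and of $\Phi^{-1}(\Dom(\pi))$ meets each fiber $\{g_1\}\times\overline{\Omega}\times\{g_2\}$, $g_i\in G$, in a proper closed subset, which can be checked after translating into $\Dom(\pi)$). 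On $\mathcal{V}_0$ the composite $\Psi\circ\Phi$ is defined and equals the identity by the previous paragraph; hence $\Phi\vert_{\mathcal{V}_0}$ is a monomorphism onto its image inside $\Dom(\Psi)$, with $\Psi$ providing a retraction. To upgrade ``monomorphism with a rational retraction'' to ``open immersion'' I would invoke the standard fact (as used in the birational-group-law setup, e.g. \cite[Chapter~5]{BLR} or \cite{redctiveconrad}*{Section~6.3}) that a monomorphism of finite type between smooth $S$-schemes that admits a section-wise inverse on a dense open is an open immersion after shrinking the source; concretely, one shrinks $\mathcal{V}_0$ to the open locus $\mathcal{V}$ where $\Phi$ is étale (nonempty and $\overline{\Omega}$-dense since $\Phi$ is birational between smooth schemes, so it is generically étale on each fiber), and an étale monomorphism is an open immersion. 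The containment $\{e\}\times\overline{\Omega}\times\{e\}\subset\mathcal{V}$ survives because $\Phi$ restricts to the identity there (again by \Cref{theoremrationalaction}~(3)), which is étale.

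I expect the main obstacle to be the last step: passing from ``$\Phi\vert_{\mathcal{V}_0}$ is a monomorphism with rational retraction $\Psi$'' to ``open immersion after shrinking,'' while simultaneously keeping $\{e\}\times\overline{\Omega}\times\{e\}$ in the shrunk domain and keeping the domain $\overline{\Omega}$-dense. The smoothness hypotheses ($G$ is smooth over $S$, $\overline{\Omega}$ is an affine space over $S$) make the étale-locus argument clean, but one must be careful that the étale locus of $\Phi$ genuinely contains $\{e\}\times\overline{\Omega}\times\{e\}$ and is $\overline{\Omega}$-dense; both follow from the fact that $\Phi$ restricts to the identity on $\{e\}\times\overline{\Omega}\times\{e\}$ and is birational, so its Jacobian is invertible along that locus and generically on each fiber. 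A secondary technical point is the $\overline{\Omega}$-density of the various open subschemes, which throughout is checked fiberwise by translating sections into $\Dom(\pi)$ via the $(G\times G)$-action as in \Cref{theoremrationalaction}.
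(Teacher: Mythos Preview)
Your proof is correct, and your $\mathcal{V}_0:=\Dom(\Phi)\cap\Phi^{-1}(\Dom(\Psi))$ is exactly the $\mathcal{V}$ the paper takes. The difference is only in the second half: the paper avoids your \'etale-locus detour entirely. Having set $\mathcal{V}:=\Dom(\Phi)\cap\Phi^{-1}(\Dom(\Psi))$, it symmetrically defines $\mathcal{V}':=\Dom(\Psi)\cap\Psi^{-1}(\Dom(\Phi))$ and observes that $\Psi\circ\Phi=\Id$ on $\mathcal{V}$ \emph{and} $\Phi\circ\Psi=\Id$ on $\mathcal{V}'$ (both from \Cref{associativity}); since $\Phi(\mathcal{V})\subset\mathcal{V}'$ and $\Psi(\mathcal{V}')\subset\mathcal{V}$, the restrictions $\Phi\vert_{\mathcal{V}}$ and $\Psi\vert_{\mathcal{V}'}$ are honest two-sided inverses, so $\Phi\vert_{\mathcal{V}}\colon\mathcal{V}\xrightarrow{\sim}\mathcal{V}'$ is an isomorphism onto the open $\mathcal{V}'\subset\Dom(\Psi)$. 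No monomorphism-plus-\'etale argument, no shrinking, is needed.

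Your route still lands: the Jacobian of $\Phi$ along $\{e\}\times\overline{\Omega}\times\{e\}$ really is invertible (by \Cref{theoremrationalaction}~(3) the middle block is the identity), and an \'etale monomorphism is an open immersion. But note that, by the paper's argument, $\Phi\vert_{\mathcal{V}_0}$ is already an isomorphism onto its image, hence \'etale on \emph{all} of $\mathcal{V}_0$; your ``shrinking to the \'etale locus'' is therefore a no-op. The payoff of the paper's approach is that it sidesteps the smoothness/Jacobian computation and the slightly vague appeal to a ``standard fact,'' at the cost of one extra line defining $\mathcal{V}'$ symmetrically.
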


\begin{proof}
    The rational morphisms $\Phi$ and $\Psi$ are birational because, by \Cref{associativity}, they are inverse to each other. 
    Let $\mathcal{U}$ and $\mathcal{U}'$ be the definition domains of $\Phi$ and $\Psi$. 
    Let $$\mathcal{V}:=\Phi^{-1}(\mathcal{U'})\bigcap \mathcal{U}\;\; \text{and}\;\; \mathcal{V}':=\Psi^{-1}(\mathcal{U})\bigcap \mathcal{U}'.$$
    Note that $\Phi\vert_{\mathcal{V}}$ and $\Psi\vert_{\mathcal{V}'}$ are inverse to each other and by \Cref{theoremrationalaction}, $\mathcal{V}$ and $\mathcal{V}'$ contain $\{e\}\times\overline{\Omega}\times\{e\}$.
\end{proof}

\section{Compactification for split adjoint reductive group schemes}\label{sectioncompactification}
We shall keep the notations of \Cref{sectionrationalact}. In this section, we first define a compactification of the split adjoint reductive group $G$ as an fppf-sheaf over the category $\Sch/S$. Then we prove that this sheaf is in fact a projective scheme equipped with a $(G\times G)$-action which equivariantly contains $G$ as an open $S$-dense subscheme. Moreover, we show that geometric fibers of this compactification are wonderful compactifications as we recalled in \Cref{wonderfulcomp}. 

\subsection{An equivalence relation}
Recall from \Cref{theoremrationalaction} that we have the $S$-rational morphism $\pi\colon G\times\overline{\Omega}\times G\dashrightarrow \overline{\Omega}$. The following construction is inspired by \cite[exposé~XVIII, 3.2.3]{SGA3II}. 

\begin{definition}\label{relationonOmega}
     We define a relation on $G\times\overline{\Omega}\times G$ as follows:
     for a test $S$-scheme $S'$ and $(g_1, x, g_2)$, $(g_1', x', g_2')\in( G\times\overline{\Omega}\times G )(S')$, we say that $(g_1, x, g_2)$ and $(g_1', x', g_2')$ are equivalent, if and only if, there exist an fppf cover $S''\rightarrow S'$ and a section $(a_1, a_2)\in ( G\times G)(S'')$   such that $\pi(a_1g_1,x,a_2g_2)$ and $\pi(a_1g_1',x',a_2g_2')$ are both well defined and are equal over $S''$. We write $(g_1, x, g_2)\sim(g_1', x', g_2')$ if  $(g_1, x, g_2)$, $(g_1', x', g_2')\in ( G\times\overline{\Omega}\times G)(S')$ are equivalent.
\end{definition}

Our next goal is to show that the relation in \Cref{relationonOmega} is an equivalence relation. As a preparation, we need the following lemma which is inspired by \cite[exposé~XVIII, lemme~3.3]{SGA3II}. This lemma shows that the equivalence of any two sections of $G\times \overline{\Omega}\times G$ defined in \Cref{relationonOmega} can be tested with respect to any section $(a_1,a_2)$ bringing the two sections into $\Dom(\pi)$.

\begin{lemma}\label{choiceofequivalence}
    Consider two sections 
    $$(g_1, x, g_2),(g_1', x', g_2')\in( G\times\overline{\Omega}\times G)(S)$$
    together with $(a_1, a_2)\in (G\times G)(S')$, where $S'$ is an fppf cover of $S$, such that 
    $$\pi(a_1g_1, x, a_2g_2)=\pi(a_1g_1', x', a_2g_2').$$
    Then, for $(h_1, h_2)\in (G\times G)(S'')$ where $S''$ is an $S$-scheme, if $\pi(h_1g_1, x, h_2g_2)$ and $\pi(h_1g_1', x', h_2g_2')$ are well defined, then they are equal.
\end{lemma}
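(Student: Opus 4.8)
The plan is to reduce the assertion, via associativity of $\pi$, to a single membership statement in $\Dom(\pi)$, and then to establish that membership by passing to geometric fibers. First I would pass to a common base: set $\widetilde S:=S'\times_S S''$, an fppf cover of $S''$. All the data pull back to $\widetilde S$ --- the section $(a_1,a_2)$, the section $(h_1,h_2)$, the equality $\pi(a_1g_1,x,a_2g_2)=\pi(a_1g_1',x',a_2g_2')$, and the fact that $\pi(h_1g_1,x,h_2g_2)$ and $\pi(h_1g_1',x',h_2g_2')$ are well defined. Since $\overline\Omega$ is affine over $S$, hence separated, the desired equality of two sections of $\overline\Omega$ over $S''$ may be checked after the faithfully flat base change $\widetilde S\to S''$. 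Thus, after replacing $S''$ by $\widetilde S$, I may assume $(a_1,a_2)\in(G\times G)(S'')$ and $\pi(a_1g_1,x,a_2g_2)=\pi(a_1g_1',x',a_2g_2')$ over $S''$; write $y$ for this common section of $\overline\Omega$.

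The key claim is that it is enough to show $\bigl(h_1a_1^{-1},\,y,\,h_2a_2^{-1}\bigr)\in\Dom(\pi)(S'')$. Granting this, I would consider the section $P:=\bigl((h_1a_1^{-1},h_2a_2^{-1}),(a_1g_1,x,a_2g_2)\bigr)$ of $(G\times G)\times(G\times\overline\Omega\times G)$ over $S''$. It lies in the definition domain of $\psi$ from \Cref{associativity}, since $\psi(P)=\pi(h_1g_1,x,h_2g_2)$ is well defined by hypothesis; and it lies in the definition domain of $\phi$, since $(a_1g_1,x,a_2g_2)\in\Dom(\pi)$ and $(h_1a_1^{-1},\pi(a_1g_1,x,a_2g_2),h_2a_2^{-1})=(h_1a_1^{-1},y,h_2a_2^{-1})\in\Dom(\pi)$ by the claim. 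Because $\phi=\psi$ as $S$-rational morphisms (\Cref{associativity}), and because two equal rational morphisms to a separated $S$-scheme agree at every section at which they are both well defined --- the common definition domain being an open subscheme of the $S$-smooth scheme $(G\times G)\times(G\times\overline\Omega\times G)$, in which fiberwise-dense open subschemes are schematically dense --- I get $\pi(h_1a_1^{-1},y,h_2a_2^{-1})=\phi(P)=\psi(P)=\pi(h_1g_1,x,h_2g_2)$. Running the identical argument with $(a_1g_1',x',a_2g_2')$ in place of $(a_1g_1,x,a_2g_2)$ --- the point being that $\pi(a_1g_1',x',a_2g_2')=y$ as well, so the required membership is the same one --- gives $\pi(h_1a_1^{-1},y,h_2a_2^{-1})=\pi(h_1g_1',x',h_2g_2')$; combining, $\pi(h_1g_1,x,h_2g_2)=\pi(h_1g_1',x',h_2g_2')$.

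The remaining, and essential, point is to prove $(h_1a_1^{-1},y,h_2a_2^{-1})\in\Dom(\pi)$, and this is where I expect the real difficulty to lie. Since $\Dom(\pi)$ is an open subscheme of $G\times\overline\Omega\times G$, it suffices to check that the image of every point of $S''$ under this section lies in $|\Dom(\pi)|$; by passing to geometric points and using that $\Dom(\pi)$ is compatible with base change (which follows from the explicit, formula-based construction of $\pi$ in the proof of \Cref{theoremrationalaction}), this reduces to a statement over an algebraically closed field $\overline{k(s)}$. There, by \Cref{theoremrationalaction}~(2), $\pi_{\overline{k(s)}}$ is the restriction to $G\times\overline\Omega\times G$ of the genuine $(G\times G)$-action $(g_1,w,g_2)\mapsto g_1wg_2^{-1}$ on the wonderful compactification $X$ of $G_{\overline{k(s)}}$, so $\Dom(\pi_{\overline{k(s)}})$ is the preimage of the open cell $\overline\Omega\subseteq X$ under that morphism; substituting $y=a_1g_1\,x\,g_2^{-1}a_2^{-1}$ in $X$, one computes
$$h_1a_1^{-1}\cdot y\cdot(h_2a_2^{-1})^{-1}=h_1g_1\,x\,g_2^{-1}h_2^{-1}=\pi_{\overline{k(s)}}(h_1g_1,x,h_2g_2),$$
which lies in $\overline\Omega$ because $\pi(h_1g_1,x,h_2g_2)$ is a well-defined section of $\overline\Omega$; hence $(h_1a_1^{-1},y,h_2a_2^{-1})$ lies in $\Dom(\pi_{\overline{k(s)}})$. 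The hard part is exactly this passage from geometric fibers to the possibly non-reduced base $S''$: it rests on the base-change behavior of the definition domain of $\pi$ together with the fact that, for schemes smooth over $S$, fiberwise-dense open subschemes are schematically dense, which is what lets identities between $S$-rational morphisms propagate to their whole definition domains. If one wishes to avoid relying on the base change of the maximal domain, one can instead carry out the middle step after first replacing $(h_1,h_2)$ by $(c_1h_1,c_2h_2)$ for a generic section $(c_1,c_2)$ of $G\times G$ and then descending, working throughout with the explicit $S$-dense open on which $\pi$ was built in \Cref{theoremrationalaction}.
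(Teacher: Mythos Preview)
Your reduction to the single membership claim $(h_1a_1^{-1},y,h_2a_2^{-1})\in\Dom(\pi)$ is natural, but the step you flag as ``the hard part'' is a genuine gap. You argue fiberwise: over each geometric point $\bar s$ you invoke the wonderful compactification to see that the action carries this triple into $\overline{\Omega}$, hence into $\Dom(\pi_{\overline{k(\bar s)}})$. But what you need is membership in $\Dom(\pi_{S''})$, and for that you would need $\Dom(\pi)_{\overline{k(\bar s)}}=\Dom(\pi_{\overline{k(\bar s)}})$. The compatibility of definition domains with base change in \cite[\S2.5, Proposition~6]{BLR} (and in \cite[exposé~XVIII, proposition~1.6]{SGA3II}) is stated only for \emph{flat} base change, and $\Spec \overline{k(s)}\to S$ is typically not flat; the ``explicit, formula-based construction'' of $\pi$ in \Cref{theoremrationalaction} only furnishes an $S$-dense open $\mathcal{R}\subset\Dom(\pi)$, not the maximal domain, so it does not deliver the base-change statement you invoke. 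Separately, your fiberwise step uses the existence of the classical wonderful compactification, which the paper explicitly declares it will not rely on in these chapters (see the end of \Cref{lift}); this would introduce a logical dependence the paper is written to avoid.

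The paper's proof sidesteps both issues with a single idea: instead of fixing $(h_1,h_2)$ and chasing domain membership, one lets $(z_1,z_2)$ vary and uses the associativity \Cref{associativity} to obtain
\[
\pi(z_1g_1,x,z_2g_2)=\pi(z_1a_1^{-1},\pi(a_1g_1,x,a_2g_2),z_2a_2^{-1})=\pi(z_1a_1^{-1},\pi(a_1g_1',x',a_2g_2'),z_2a_2^{-1})=\pi(z_1g_1',x',z_2g_2')
\]
as an equality of \emph{$S'$-rational morphisms} $(G\times G)_{S'}\dashrightarrow\overline{\Omega}_{S'}$. By fppf descent of rational maps \cite[exposé~XVIII, proposition~1.6]{SGA3II} this descends to an equality of $S$-rational morphisms, and then one simply evaluates at the given $(h_1,h_2)$, where both outer terms are already assumed well defined. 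No domain-membership verification for the intermediate expression is ever needed, nor any appeal to the classical theory. Your closing alternative --- replace $(h_1,h_2)$ by a generic translate and descend --- is groping toward exactly this: the clean way to say ``generic $(c_1,c_2)$'' is to make $(z_1,z_2)$ a free variable and work with rational morphisms from the start.
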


\begin{proof} 
    We have that
    \begin{flalign*}
         \pi(z_1g_1, x, z_2g_2) &=\pi(z_1a_1^{-1},\pi(a_1g_1,x,a_2g_2),z_2a_2^{-1})\\         &=\pi(z_1a_1^{-1},\pi(a_1g_1',x',a_2g_2'),z_2a_2^{-1})\\
         &=\pi(z_1g_1', x', z_2g_2')
    \end{flalign*}
    holds as an equality of $S'$-rational morphisms from $(G\times G)_{S'}$ to $\overline{\Omega}_{S'}$ with respect to the variable $(z_1,z_2)$, where \Cref{associativity} is used in the first and the last equality. Hence by \cite[exposé~XVIII, proposition~1.6]{SGA3II}, 
    $\pi(z_1g_1, x, z_2g_2)=\pi(z_1g_1', x', z_2g_2')$ holds as an equality of $S$-rational morphisms from $G\times G$ to $\overline{\Omega}$. Therefore,
    $\pi(h_1g_1, x, h_2g_2)=\pi(h_1g_1', x', h_2g_2').$\qedhere
\end{proof}

\begin{lemma}\label{equivalencerelation}
    The relation in \Cref{relationonOmega} is an equivalence relation.
\end{lemma}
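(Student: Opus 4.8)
The plan is to verify the three defining properties of an equivalence relation — reflexivity, symmetry, and transitivity — for the relation of \Cref{relationonOmega}, using \Cref{choiceofequivalence} as the key technical device to avoid worrying about whether $\pi$ is defined on a particular section.

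First, reflexivity: given $(g_1,x,g_2)\in(G\times\overline{\Omega}\times G)(S')$, I need an fppf cover $S''\to S'$ and $(a_1,a_2)\in(G\times G)(S'')$ with $\pi(a_1g_1,x,a_2g_2)$ well defined. Since $\Dom(\pi)$ contains $\{e\}\times\overline{\Omega}\times\{e\}$ (\Cref{theoremrationalaction}) and is open, the locus of pairs $(a_1,a_2)$ bringing $(g_1,x,g_2)$ into $\Dom(\pi)$ is an open subscheme of $(G\times G)_{S'}$ that meets every fiber (it contains the points $(g_1^{-1},g_2^{-1})$ fiberwise, up to the fiberwise density of $\Omega_G$); after an fppf — indeed étale-local — base change one finds such a section, e.g. essentially $(g_1^{-1},g_2^{-1})$ composed with a section of $\Omega_G$, giving $\pi(a_1g_1,x,a_2g_2)=\pi(a_1g_1,x,a_2g_2)$ trivially. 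Symmetry is immediate from the definition, which is visibly symmetric in the two triples.

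The substantive point is transitivity. Suppose $(g_1,x,g_2)\sim(g_1',x',g_2')$ via an fppf cover $S_1\to S'$ and $(a_1,a_2)\in(G\times G)(S_1)$, and $(g_1',x',g_2')\sim(g_1'',x'',g_2'')$ via $S_2\to S'$ and $(b_1,b_2)\in(G\times G)(S_2)$. Pass to the fppf cover $S_3\coloneq S_1\times_{S'}S_2$. On $S_3$ I would choose, after a further fppf (étale-local) refinement, a single pair $(c_1,c_2)\in(G\times G)(S_3)$ such that all three of $\pi(c_1g_1,x,c_2g_2)$, $\pi(c_1g_1',x',c_2g_2')$, $\pi(c_1g_1'',x'',c_2g_2'')$ are well defined — possible because each "good locus'' is open and fiberwise dense, hence so is their intersection, and a section exists fppf-locally by \cite[\S~2.3, Proposition~5]{BLR} or a standard smearing-out argument. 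Now \Cref{choiceofequivalence}, applied to the first equivalence with the test section $(c_1,c_2)$, gives $\pi(c_1g_1,x,c_2g_2)=\pi(c_1g_1',x',c_2g_2')$; applied to the second equivalence it gives $\pi(c_1g_1',x',c_2g_2')=\pi(c_1g_1'',x'',c_2g_2'')$. Chaining these equalities yields $\pi(c_1g_1,x,c_2g_2)=\pi(c_1g_1'',x'',c_2g_2'')$ over $S_3$, which is exactly the witness that $(g_1,x,g_2)\sim(g_1'',x'',g_2'')$.

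The main obstacle is the transitivity step, and specifically the need to produce a common test section $(c_1,c_2)$ bringing all three triples simultaneously into $\Dom(\pi)$: one must check that the relevant loci in $G\times G$ are open and fiberwise dense so that their intersection still admits an fppf-local section, and one must be slightly careful that the two equivalences were a priori witnessed over different covers, so everything should be pulled back to $S_1\times_{S'}S_2$ first. Once the common section is in hand, \Cref{choiceofequivalence} does all the work and no computation with the explicit formula for $\pi$ is needed.
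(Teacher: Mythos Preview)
Your proposal is correct and follows essentially the same approach as the paper: symmetry is immediate, transitivity is handled by finding a common fppf-local section $(c_1,c_2)$ bringing all three triples into $\Dom(\pi)$ and then applying \Cref{choiceofequivalence} twice, exactly as you do. One simplification you could make, matching the paper: for reflexivity no fppf refinement is needed at all—just take $S''=S'$ and $(a_1,a_2)=(g_1^{-1},g_2^{-1})$, since $\pi(e,x,e)$ is defined by \Cref{theoremrationalaction}.
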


\begin{proof}
    It is clear that this relation is symmetric. 
    
    To show the reflexivity, for an $S$-scheme $S'$ and $(g_1,x,g_2)\in ( G\times\overline{\Omega}\times G) (S')$, 
    we take the section $(a_1,a_2)\coloneq(g_1^{-1},g_2^{-1})\in (G\times G)(S')$. Then, $\pi(a_1g_1,x, a_2g_2)=\pi(e,x,e)$ is well-defined by \Cref{theoremrationalaction}. This proves the reflexivity.

    To show the transitivity, we assume that, over $S'$, we have $$(g_1, x, g_2)\sim(g_1', x', g_2'),\;(g_1', x', g_2')\sim(g_1'', x'', g_2'').$$ 
         By \Cref{strictness} and \cite[exposé~XVIII, proposition~1.1 (i)]{SGA3II}, there exists an $S$-dense open subscheme $\mathcal{V}$ of $G\times G$ such that for any section $(h_1,h_2)$ of $\mathcal{V}$ valued in some $S$-scheme, $\pi(h_1g_1, x, h_2g_2)$, $\pi(h_1g_1', x', h_2g_2')$ and $\pi(h_1g_1'', x'', h_2g_2'')$ are all well defined. 
         Such an fppf local section $(h_1,h_2)$ exists by \cite[exposé~XVIII, proposition~1.7]{SGA3II}.
         Then the transitivity follows from \Cref{choiceofequivalence}.
\end{proof}

\begin{lemma}\label{graphlemma}
    Consider the following $S$-rational morphism
     \begin{flalign*}        \phi\colon( G\times\overline{\Omega}\times G)\times( G\times G)&\dashrightarrow \overline{\Omega},\\         
     (g_1, x ,g_2,g_1',g_2')&\longmapsto\pi(g_1'^{-1},\pi(g_1,x,g_2), g_2'^{-1}).
    \end{flalign*}
    Let $\mathcal{U}$ be the definition domain of the rational morphism $\phi$, and let $\Gamma$ be the graph of the morphism $\phi\vert_{\mathcal{U}}$. 
    We consider a section 
    $$(g_1, x ,g_2,g_1',g_2',y)\in (( G\times\overline{\Omega}\times G)\times( G\times G)\times\overline{\Omega})(S).$$
    If $(g_1, x ,g_2,g_1',g_2',y)\in \Gamma(S)$, then $(g_1, x, g_2)\sim(g_1', y, g_2').$ 
\end{lemma}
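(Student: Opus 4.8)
The plan is to unwind the definition of $\Gamma$ and reduce the claim to the definition of the equivalence relation in \Cref{relationonOmega}. By construction, a section $(g_1, x, g_2, g_1', g_2', y)$ lies in $\Gamma(S)$ precisely when $(g_1, x, g_2, g_1', g_2')$ lies in $\mathcal{U}(S)$ and $y = \phi(g_1, x, g_2, g_1', g_2') = \pi(g_1'^{-1}, \pi(g_1, x, g_2), g_2'^{-1})$ over $S$. In particular, membership in $\mathcal{U}(S)$ already guarantees that both $\pi(g_1, x, g_2)$ and the outer application $\pi(g_1'^{-1}, \pi(g_1, x, g_2), g_2'^{-1})$ are well defined over $S$.

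First I would use the associativity of $\pi$ (\Cref{associativity}) to rewrite $y$. Since $\phi$ and $\psi$ in \Cref{associativity} agree as $S$-rational morphisms, we get
$$y = \pi(g_1'^{-1}, \pi(g_1, x, g_2), g_2'^{-1}) = \pi(g_1'^{-1}g_1,\, x,\, g_2'^{-1}g_2)$$
wherever both sides are defined; by \cite[exposé~XVIII, proposition~1.6]{SGA3II} (or the fiberwise density argument already invoked), this equality of rational morphisms propagates to the locus $\mathcal{U}$ where the left-hand side makes sense. Thus over $S$ we have an honest equality of sections $y = \pi(g_1'^{-1}g_1, x, g_2'^{-1}g_2)$.

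Next I would exhibit the fppf cover and the translating section required by \Cref{relationonOmega} to witness $(g_1, x, g_2) \sim (g_1', y, g_2')$. Take $S'' = S$ itself (so the cover is trivial) and set $(a_1, a_2) \coloneq (g_1'^{-1}, g_2'^{-1}) \in (G \times G)(S)$. Then $\pi(a_1 g_1, x, a_2 g_2) = \pi(g_1'^{-1}g_1, x, g_2'^{-1}g_2)$ is well defined over $S$ (this is exactly the right-hand side above, whose definedness is part of $(g_1, x, g_2, g_1', g_2') \in \mathcal{U}(S)$), and it equals $y$. On the other hand $\pi(a_1 g_1', y, a_2 g_2') = \pi(e, y, e)$, which is well defined by \Cref{theoremrationalaction} (its definition domain contains $\{e\} \times \overline{\Omega} \times \{e\}$) and equals $y$ by part (3) of that theorem. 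Hence $\pi(a_1 g_1, x, a_2 g_2) = y = \pi(a_1 g_1', y, a_2 g_2')$, which is precisely the condition defining $(g_1, x, g_2) \sim (g_1', y, g_2')$.

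I expect the only subtlety to be the bookkeeping around definition domains: one must check that the identities obtained from \Cref{associativity} really do hold on the nose over $S$ (not just on an $S$-dense open subscheme), which is why membership of the given section in $\Gamma(S)$ — equivalently in $\mathcal{U}(S)$ — is essential, and why the propagation-of-equalities statement \cite[exposé~XVIII, proposition~1.6]{SGA3II} is the right tool. Everything else is a direct substitution.
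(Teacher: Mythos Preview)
Your opening claim---that membership in $\mathcal{U}(S)$ guarantees both $\pi(g_1,x,g_2)$ and the outer application are well defined---is precisely the point that fails. The domain of definition $\mathcal{U}$ of the rational morphism $\phi$ is the \emph{maximal} open on which $\phi$ extends to a morphism; it can be strictly larger than the locus where the defining composition $\pi(g_1'^{-1},\pi(g_1,x,g_2),g_2'^{-1})$ is computable step by step. The paper flags this explicitly: ``$\pi(g_1,x,g_2)$ may not be well defined because $\pr_{123}(\mathcal{U})$ is possibly larger than the definition domain of $\pi$.'' So your justification for the key equality $y=\pi(g_1'^{-1}g_1,x,g_2'^{-1}g_2)$, and hence for the well-definedness of $\pi(a_1g_1,x,a_2g_2)$ with your choice of $(a_1,a_2)$, has a gap.

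That said, your strategy of taking $(a_1,a_2)=(g_1'^{-1},g_2'^{-1})$ over $S$ itself can be repaired. By \Cref{associativity} the rational morphisms $\phi$ and $\pi\circ\alpha$ coincide, where $\alpha(g_1,x,g_2,g_1',g_2')=(g_1'^{-1}g_1,x,g_2'^{-1}g_2)$; since $\alpha$ factors as an automorphism of the source followed by the projection $\pr_{123}$ (which has a section), one checks that $\Dom(\pi\circ\alpha)=\alpha^{-1}(\Dom(\pi))$, and hence $\mathcal{U}=\alpha^{-1}(\Dom(\pi))$. This yields $(g_1'^{-1}g_1,x,g_2'^{-1}g_2)\in\Dom(\pi)(S)$ directly---without ever needing $\pi(g_1,x,g_2)$ to be defined---and the rest of your argument then goes through. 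The paper instead sidesteps any analysis of $\mathcal{U}$: it passes to an fppf cover $S'$, chooses a generic $(h_1,h_2)\in(G\times G)(S')$ so that $\pi(h_1g_1,x,h_2g_2)$ and $\pi(h_1g_1',y,h_2g_2')$ both land in $\Dom(\pi)$, and then establishes their equality by a chain of identities of rational morphisms in the free variables $(g_1,x,g_2,g_1',g_2')$.
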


\begin{proof}
    Assume that $(g_1, x ,g_2,g_1',g_2',y)\in \Gamma(S)$, i.e., $y=\phi(g_1, x ,g_2,g_1',g_2')$. Note that $\pi(g_1, x ,g_2)$ may not be well defined because $\pr_{123}(\mathcal{U})$ is possibly larger than the definition domain of $\pi$. However, by \Cref{theoremrationalaction} and \cite[exposé~XVIII, proposition~1.7]{SGA3II}, there exists an $(h_1,h_2)\in(G\times G)(S')$, where $S'$ is an fppf cover of $S$, such that $\pi(h_1g_1, x ,h_2g_2)$ and $\pi(h_1g_1', y ,h_2g_2')$ are both well defined. Again, by the associativity of $\pi$ (\Cref{associativity}) and the definition of $\phi$ and by viewing $(g_1, x ,g_2,g_1',g_2')$ as variables,  we have the following equalities of $S'$-rational morphisms from $( G\times \overline{\Omega}\times G \times G\times G )_{S'}$ to $\overline{\Omega}_{S'}$ 
    \begin{flalign*}
         \pi(h_1g_1', y ,h_2g_2')&=\pi(h_1g_1', \phi(g_1, x ,g_2,g_1',g_2') ,h_2g_2')\\ &=\pi(h_1, \pi(g_1',\phi(g_1, x ,g_2,g_1',g_2') ,g_2') ,h_2)\\         
         &=\pi(h_1,\pi(g_1,x,g_2),h_2)
         =\pi(h_1g_1, x, h_2g_2).
    \end{flalign*}
     It follows that $\pi(h_1g_1', y ,h_2g_2')=\pi(h_1g_1, x, h_2g_2)$ holds as an equality in $\overline{\Omega}(S')$.
\end{proof}

\subsection{Schematic nature of the compactification}

Now we are in the position to construct a compactification of the group $G$ as a sheaf. The following definition is inspired by the definition given in \cite[exposé~XVIII, page~387]{SGA3II}.

\begin{definition}\label{defintionofcompactification}
    Thanks to \Cref{equivalencerelation}, we define the compactification of the adjoint group scheme $G$ to be the quotient sheaf $\mathcal{X}$ of the scheme $ G\times\overline{\Omega}\times G$ with respect to the equivalence relation constructed in \Cref{equivalencerelation}. We denote the quotient morphism by $ Q_G\colon G\times\overline{\Omega}\times G\twoheadrightarrow\mathcal{X}$.
\end{definition}

\begin{remark}
    \Cref{defintionofcompactification} is compatible with any base change in the sense that, for any morphism $S'\longrightarrow S$, the fppf-sheaf $\mathcal{X}_{S'}$ over the category $\Sch/S'$ is isomorphic to the quotient sheaf of $ G_{S'}\times_{S'}\overline{\Omega}_{S'}\times_{S'} G_{S'}$ obtained by applying \Cref{defintionofcompactification} to the $S'$-rational morphism $\pi_{S'}$.
\end{remark}

We adapt the method due to Artin in the proof of \cite[exposé~XVIII, proposition~3.5]{SGA3II} to deduce the following result which says that the equivalence relation of  \Cref{equivalencerelation} can be encoded in a nice scheme.

\begin{theorem}\label{algebraicspace}
    The quotient sheaf of the relation $\Gamma\rightrightarrows G\times\overline{\Omega}\times G$ is isomorphic to $\mathcal{X}$, where $\Gamma$ is defined in \Cref{graphlemma}, the two morphisms are $\pr_{123}$ and $\pr_{456}$ and we implicitly identify the target of the projection $\pr_{456}$ with $ G\times\overline{\Omega}\times G$ by switching the last two coordinates.
\end{theorem}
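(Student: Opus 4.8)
The plan is to realize both sides as the quotient of $X\coloneq G\times\overline{\Omega}\times G$ by an equivalence relation in the fppf topos and then to check the two equivalence relations agree. Write $\sim$ for the relation of \Cref{relationonOmega}; it is by construction an fppf subsheaf of $X\times X$ and an equivalence relation by \Cref{equivalencerelation}, and $\mathcal{X}=X/{\sim}$ by \Cref{defintionofcompactification}. On the other hand, the quotient sheaf of $\Gamma\rightrightarrows X$ is the coequalizer of the two maps $p:=\pr_{123}$ and $q:=\pr_{456}$ (the latter read with its last two coordinates switched), which is $X/\langle\Gamma\rangle$, where $\langle\Gamma\rangle\subseteq X\times X$ denotes the equivalence relation generated by the image of $(p,q)\colon\Gamma\to X\times X$. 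So it suffices to prove $\langle\Gamma\rangle={\sim}$.

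The inclusion $\langle\Gamma\rangle\subseteq{\sim}$ is exactly the content of \Cref{graphlemma}: a section of $\Gamma$ produces a pair of sections of $X$ that are $\sim$-equivalent, and since $\sim$ is an equivalence relation the equivalence relation generated by $\Gamma$ is contained in it. For the reverse inclusion I would take a section $\bigl((g_1,x,g_2),(g_1',x',g_2')\bigr)\in{\sim}(S')$; unwinding \Cref{relationonOmega}, there are an fppf cover $S''\to S'$ and $(c_1,c_2)\in(G\times G)(S'')$ with $\pi(c_1g_1,x,c_2g_2)$ and $\pi(c_1g_1',x',c_2g_2')$ both well defined and equal, say to $z\in\overline{\Omega}(S'')$. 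The crucial observation is that, by the associativity of $\pi$ (\Cref{associativity}), the $S$-rational morphism $\phi$ of \Cref{graphlemma} coincides, \emph{as an $S$-rational morphism}, with the composite $\pi\circ\mu$, where $\mu\colon X\times(G\times G)\to X$, $(g_1,x,g_2,g_1',g_2')\mapsto(g_1'^{-1}g_1,x,g_2'^{-1}g_2)$, is an everywhere-defined morphism; hence $\mathcal{U}=\Dom(\phi)\supseteq\mu^{-1}(\Dom(\pi))$, which is much larger than the locus where the iterated expression naively defining $\phi$ makes sense. Since $\mu(g_1,x,g_2,c_1^{-1},c_2^{-1})=(c_1g_1,x,c_2g_2)$ lies in $\Dom(\pi)$ over $S''$, the section $(g_1,x,g_2,c_1^{-1},c_2^{-1})$ lies in $\mathcal{U}(S'')$ and $\phi(g_1,x,g_2,c_1^{-1},c_2^{-1})=\pi(c_1g_1,x,c_2g_2)=z$; thus $(g_1,x,g_2,c_1^{-1},c_2^{-1},z)\in\Gamma(S'')$, i.e. $(g_1,x,g_2)$ and $(c_1^{-1},z,c_2^{-1})$ are $\Gamma$-related over $S''$. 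The same computation with the primed data shows $(g_1',x',g_2')$ is $\Gamma$-related over $S''$ to the \emph{same} triple $(c_1^{-1},z,c_2^{-1})$. By symmetry of $\Gamma$ (\Cref{graphproperty}) and transitivity, $(g_1,x,g_2)$ and $(g_1',x',g_2')$ are $\langle\Gamma\rangle$-equivalent over $S''$; as $\langle\Gamma\rangle$ is an fppf subsheaf of $X\times X$ and $S''\to S'$ is an fppf cover, the original pair lies in $\langle\Gamma\rangle(S')$. This gives ${\sim}\subseteq\langle\Gamma\rangle$, hence $\langle\Gamma\rangle={\sim}$ and $X/\langle\Gamma\rangle\cong\mathcal{X}$. (The same type of computation moreover shows $\Gamma$ is already reflexive, symmetric and transitive, so $\langle\Gamma\rangle=\Gamma$; this is what will license the application of Artin's theorem directly to $\Gamma$ in the sequel.)

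I expect the only genuine obstacle to be the point emphasized above: one must see that $\Dom(\phi)$ is large enough, and in particular contains triples $(g_1,x,g_2,c_1^{-1},c_2^{-1})$ for which the inner evaluation $\pi(g_1,x,g_2)$ is \emph{not} defined — it is precisely this extension, a real feature of the rational action resting on the associativity established in \Cref{associativity}, that places the witnessing triple into $\Gamma$. Everything else is routine: that $\sim$ and $\langle\Gamma\rangle$ are fppf subsheaves of $X\times X$, that fppf-local membership in a subsheaf is membership, and that equality of the two equivalence relations yields an isomorphism of the quotient sheaves. These steps I would carry out following the template of \cite[exposé~XVIII, proposition~3.5]{SGA3II}.
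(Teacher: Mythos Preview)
Your argument is correct for the theorem as stated, and it takes a genuinely different and shorter route than the paper. The paper proves the \emph{stronger} statement that the image of $(p,q)\colon\Gamma\to X\times X$ is \emph{exactly} the relation $\sim$, i.e.\ that $(g_1,x,g_2)\sim(g_1',y,g_2')$ over $S'$ already forces $(g_1,x,g_2,g_1',g_2',y)\in\Gamma(S')$. To do so the paper passes to strictly henselian local bases, invokes the open $\mathcal{V}$ from \Cref{strictness}, and manufactures a section $(h_1,h_2)$ over an auxiliary flat finitely presented $S_0\to S$ via a system-of-parameters and Zariski Main Theorem argument; it then uses an alternative presentation $\varphi$ of $\phi$ built from $(h_1,h_2)$ together with fppf descent of domains of definition to place the section in $\mathcal{U}$. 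Your observation that $\phi=\pi\circ\mu$ as $S$-rational maps, whence $\mathcal{U}\supseteq\mu^{-1}(\Dom(\pi))$, bypasses all of this: it immediately puts the witnessing sections into $\mathcal{U}$ over the fppf cover $S''$, and then you reach the conclusion through the intermediate triple $(c_1^{-1},z,c_2^{-1})$ and the generated relation $\langle\Gamma\rangle$.

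One caution about your parenthetical. You assert that ``the same type of computation'' shows $\Gamma$ itself is transitive (hence $\langle\Gamma\rangle=\Gamma$), which is precisely what the next Corollary needs to feed into Artin's theorem. This is true but is not quite the \emph{same} computation: it requires a second use of the associativity trick, this time applied to the two rational maps $\phi,\psi$ of \Cref{associativity}. Concretely, if $(g_1'^{-1}g_1,x,g_2'^{-1}g_2)\in\Dom(\pi)$ with value $y$ and $(g_1''^{-1}g_1',y,g_2''^{-1}g_2')\in\Dom(\pi)$, then the point $\bigl((g_1''^{-1}g_1',g_2''^{-1}g_2'),(g_1'^{-1}g_1,x,g_2'^{-1}g_2)\bigr)$ lies in the step-by-step domain of the iterated map $\phi$ of \Cref{associativity}; since $\phi=\psi$ as rational maps and $\Dom(\psi)=\nu^{-1}(\Dom(\pi))$ with $\nu\bigl((a_1,a_2),(b_1,x,b_2)\bigr)=(a_1b_1,x,a_2b_2)$ (same argument as for $\mu$: $\nu$ is the composite of an automorphism with a projection admitting a section), one gets $(g_1''^{-1}g_1,x,g_2''^{-1}g_2)\in\Dom(\pi)$, which is transitivity. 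You should spell this out rather than leave it as a parenthetical; combined with reflexivity (from $\{e\}\times\overline{\Omega}\times\{e\}\subset\Dom(\pi)$) and symmetry (\Cref{graphproperty}), it yields $\Gamma=\sim$ directly and recovers the paper's stronger conclusion without the henselization step.
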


\begin{proof}
    We will verify that two sections of $G\times \overline{\Omega}\times G$ are equivalent if and only if they come from a section of $\Gamma$.
    As in \Cref{graphlemma}, let $\mathcal{U}$ be the definition domain of $$\phi\colon(G\times\overline{\Omega}\times G)\times(G\times G)\dashrightarrow \overline{\Omega},$$
    and let $\Gamma$ be the graph of $\phi\vert_{\mathcal{U}}$. Then we have the natural immersion
$$\Gamma\hookrightarrow(G\times\overline{\Omega}\times G)\times (G\times G)\times \overline{\Omega}.$$

   We fix a test $S$-scheme $S'$ and two sections
    $(g_1,x,g_2), (g_1',y,g_2')\in( G\times\overline{\Omega}\times G)(S')$. Since the definition domain of $\phi_{S'}$ certainly contains $\mathcal{U}\times S'$, by applying \Cref{graphlemma} to $\phi_{S'}$, it follows that $(g_1,x,g_2,g_1',g_2',y)\in\Gamma(S')$ implies that $(g_1,x,g_2)\sim (g_1',y,g_2')$.
    
    Conversely, suppose that, for a test $S$-scheme $S'$, 
    $$(g_1,x,g_2),\; (g_1',y,g_2')\in( G\times\overline{\Omega}\times G)(S')\; \text{and} \; (g_1,x,g_2)\sim (g_1',y,g_2'),$$
    i.e., that there exist an fppf cover $S''$ over $S'$ and a section $(a_1,a_2)\in ( G\times G)(S'')$ such that
$\pi(a_1g_1,x,a_2g_2)=\pi(a_1g_1',y,a_2g_2').$ We have to show that $(g_1,x,g_2,g_1',g_2',y)\in\Gamma(S')$. For this, it suffices to localize on $S'$. Note that $(g_1,x,g_2)\sim (g_1',y,g_2')$ remains true while passing to étale neighborhoods. Since $\Gamma$ is locally of finite presentation over $S$ (by the definition of $\Gamma$), by a limit argument, we may further assume that $S'$ is the spectrum of a strictly henselian local ring. Then consider the strict localization $\widetilde{S}$ of $S$ at the image of the geometric closed point of $S'$. By \cite[proposition~18.8.8 (iii), (iv)]{EGAIV4}, the morphism $\widetilde{S}\longrightarrow S$ is flat. Therefore, by \cite[2.5, Proposition~6]{BLR}, the formation of $\mathcal{U}$ commutes with the base change to $\widetilde{S}$, hence, we may assume that $S$ is also strictly henselian and the morphism $S'\rightarrow S$ is local. 

\begin{claim}\label{claimquasisection}
    There exist a flat and finitely presented morphism $S_0\longrightarrow S$ and a section $(h_1,h_2)\in( G\times G)(S_0)$ such that $(h_1g_1,x,h_2g_2),\;(h_1g_1',y,h_2g_2')\in \mathcal{V}(S'\times S_0)$, where the open subscheme $\mathcal{V}\subset \Dom(\pi)$ is as in \Cref{strictness} and we implicitly pullback $(h_1,h_2)$ to $(G\times G)(S'\times_S S_0)$. 
\end{claim}

\begin{proof}[Proof of \Cref{claimquasisection}]
    By \Cref{strictness}, we are left with showing that there is a flat and finitely presented morphism $S_0\longrightarrow S$ and a $S$-morphism $S_0 \longrightarrow G\times G$ whose base change to $S'$ lands in the $S'$-dense open subcheme 
$$\mathcal{W}'\coloneq(\mathcal{V}_{x}\cdot(g_1^{-1}, g_2^{-1}))\bigcap \mathcal{V}_{y}\cdot(g_1'^{-1}, g_2'^{-1}))\bigcap (( G\times G)\times S')\subset (G\times G)\times S',$$
where $\mathcal{V}_{x}\subset ( G\times G)\times S'$ (resp., $\mathcal{V}_{y}\subset ( G\times G)\times S'$) is the base change of the open subscheme $\mathcal{V}\subset G\times\overline{\Omega}\times G$ along $x\in\overline{\Omega}(S')$ (resp., $y\in\overline{\Omega}(S')$). To do this, we adopt the strategy of the proof of \cite[corollaire~17.16.2]{EGAIV4}.

Let $s'$ (resp., $s$) be the closed point of $S'$ (resp., $S$). Since $\mathcal{W}'$ is $S'$-dense, we can assume that the special fiber $\mathcal{W}'_{s'}$ contains an open subscheme of the form $\mathcal{W}\times_{k(s)}k(s')$ where $\mathcal{W}$ is a nonempty open subscheme of $(G\times G)_s$ (this follows from a limit argument and \cite[01JR, 01UA]{stacks-project}). We choose a closed point $h\in \mathcal{W}\subset G\times G$. We can choose a system of parameters $(\widetilde{t_i})_{1\leq i\leq n}$ in the local ring $\mathcal{O}_{(G\times G)_s,h}$, because the special fiber $(G\times G)_s$ is a regular scheme. Then we can find an affine open neighborhood $V$ containing $h$ and $n$ sections $(t_i)_{1\leq i\leq n}\subset \Gamma (V,\mathcal{O}_{G\times G})$ lifting $(\widetilde{t_i})_{1\leq i\leq n}$. Let $V'\subset V$ be the closed subscheme cut out by $(t_i)_{1\leq i\leq n}$. By the local criterion for flatness (see \cite[théorème~11.3.8 b') and c)]{EGAIV3}), after shrinking $V'$ if needed, we can assume that $V'$ is flat over $S$. Since $(\widetilde{t_i})_{1\leq i\leq n}$ is a system of parameters,  $\mathcal{O}_{V'_s, h}$ is artinian. Since $h$ is a closed point in $V'_s$, it is isolated in $V'_s$. Hence $\Spec(\mathcal{O}_{V',h})$ is quasi-finite over $S$. As $S$ is henselian, by the Zariski main theorem, the $S$-scheme $\Spec(\mathcal{O}_{V',h})$ is even finite, see \cite[théorème~18.5.11 c')]{EGAIV4}. We take $\Spec(\mathcal{O}_{V',h})$ as $S_0$ and consider the natural $S$-morphism $\epsilon\colon\Spec(\mathcal{O}_{V',h})\rightarrow G\times G$.

Now we show that the $S'$-morphism $\epsilon\times \Id_{S'}\colon S_0\times S'\rightarrow (G\times G)\times S'$ has image in $\mathcal{W'}$. Since $\epsilon$ sends the closed point of $\Spec(\mathcal{O}_{V',h})$ to $h\in \mathcal{W}$ and, by \cite[proposition~2.4.4]{EGA1}, $\epsilon$ preserves generizations (in the sense of \cite[0061]{stacks-project}) of the closed point of $\Spec(\mathcal{O}_{V',h})$, the image of the closed fiber $\epsilon_s$ lies in $\mathcal{W}$. 
Hence, the special fiber $(\epsilon\times \Id_{S'})_{s'}$ has image in $\mathcal{W}\times_{k(s)}k(s')\subset \mathcal{W}'$. Now since $S'$ is henselian, by \cite[proposition~18.5.9 (ii)]{EGAIV4}, the $S'$-finite scheme $\Spec(S_0\times S')$ decomposes as the disjoint union of some local schemes. We already know that the closed points of these local schemes are mapped into $\mathcal{W}'$ by $\epsilon\times \Id_{S'}$. 
Hence, by \cite[proposition~2.4.4]{EGA1}, the whole space $\Spec(S_0\times S')$ also lands in $\mathcal{W}'$, as desired. 
\end{proof}

Now consider the $S_0$-rational morphism 
\begin{flalign*}        \varphi\colon( G\times\overline{\Omega}\times G)\times( G\times G)&\dashrightarrow \overline{\Omega},\\         
     (b_1, \omega ,b_2,b_1',b_2')&\longmapsto \pi((h_1b_1')^{-1},\pi
(h_1b_1,\omega,h_2b_2),(h_2b_2')^{-1}).
\end{flalign*}
By \Cref{associativity}, the two $S_0$-rational morphisms $\varphi$ and $\phi_{S_0}$ equal, where $\phi$ is defined in \Cref{graphlemma}. By \Cref{choiceofequivalence}, the equivalence $(g_1,x,g_2)\sim (g_1',y,g_2')$ implies that $$\pi(h_1g_1,x,h_2g_2)=\pi(h_1g_1',y,h_2g_2').$$
This means that  $\pi((h_1g_1')^{-1},\pi(h_1g_1,x,h_2g_2),(h_2g_2')^{-1})$ is well defined and equals $y$.
By the choice of $\mathcal{V}$ (\Cref{strictness}),  we have that $$(h_1g_1,x,h_2g_2)\in \mathcal{V}(S’\times S_0)\subset \Dom(\pi)(S’\times S_0),$$
$$((h_1g_1’)^{-1}, \pi(h_1g_1,x, h_2g_2), (h_2g_2’)^{-1})\in \Dom(\pi)(S'\times S_0),$$ so $(g_1,x,g_2,g_1’,g_2’)\in \Dom(\varphi)(S') $ because in each step where we apply $\pi$ (to form $\varphi$), the objects are in $\Dom(\pi)$.
Therefore, since $\mathcal{U}$ is the definition domain of $\phi$, by \cite[exposé~XVIII, proposition~1.6]{SGA3II}, it follows that $(g_1,x,g_2, g_1',g_2')\in \mathcal{U}(S')$ and also $(g_1,x,g_2, g_1',g_2',y)$ lies in $\Gamma(S')$.

\end{proof}

\begin{corollary}
    The sheaf $\mathcal{X}$ is an $S$-algebraic space.
\end{corollary}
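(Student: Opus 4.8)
The plan is to deduce the corollary directly from \Cref{algebraicspace} and \Cref{graphproperty} by invoking Artin's representability theorem for quotients by flat equivalence relations, in the form \cite[Corollaire~(10.4)]{Champsalgebrique}. By \Cref{algebraicspace}, the sheaf $\mathcal{X}$ is the fppf quotient sheaf of the pair of arrows $\pr_{123},\pr_{456}\colon\Gamma\rightrightarrows G\times\overline{\Omega}\times G$, where $\Gamma$ is the graph of $\phi|_{\mathcal{U}}$ from \Cref{graphlemma} and, as usual, the target of $\pr_{456}$ is identified with $G\times\overline{\Omega}\times G$ by switching the last two coordinates. So the whole task reduces to checking that this pair of arrows is an fppf equivalence relation on the $S$-scheme $G\times\overline{\Omega}\times G$.

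First I would check that $j\coloneq(\pr_{123},\pr_{456})\colon\Gamma\to(G\times\overline{\Omega}\times G)\times_S(G\times\overline{\Omega}\times G)$ is a monomorphism. This is immediate from the description of $\Gamma$ in \Cref{graphlemma}: a point of $\Gamma$ is a tuple $(g_1,x,g_2,g_1',g_2',y)$ with $y=\phi(g_1,x,g_2,g_1',g_2')$, hence it is completely determined by the pair of its images $(g_1,x,g_2)$ and $(g_1',y,g_2')$; since moreover $\Gamma\cong\mathcal{U}$ is an open subscheme of the finite-type $S$-scheme $(G\times\overline{\Omega}\times G)\times_S(G\times G)$, the morphism $j$ is a quasi-separated monomorphism, locally of finite presentation. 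Next, I would check that $j$ is an equivalence relation, i.e. reflexive, symmetric and transitive. By the proof of \Cref{algebraicspace}, for every test $S$-scheme $S'$ the image $j(\Gamma(S'))\subseteq(G\times\overline{\Omega}\times G)(S')^2$ is precisely the graph of the relation $\sim$ of \Cref{relationonOmega}; since $\sim$ is an equivalence relation by \Cref{equivalencerelation}, so is $j$. In particular $\Gamma$ represents this equivalence relation, so it carries the usual diagonal section $G\times\overline{\Omega}\times G\to\Gamma$, which shows that each of $\pr_{123}$ and $\pr_{456}$ is surjective.

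It remains to record that the two structure maps $\pr_{123}\colon\Gamma\to G\times\overline{\Omega}\times G$ and $\pr_{456}\colon\Gamma\to G\times\overline{\Omega}\times G$ are flat and locally of finite presentation, which is exactly \Cref{graphproperty}. Thus $\Gamma\rightrightarrows G\times\overline{\Omega}\times G$ is an fppf equivalence relation of schemes, and \cite[Corollaire~(10.4)]{Champsalgebrique} applies: its fppf quotient sheaf, namely $\mathcal{X}$, is an $S$-algebraic space, and the quotient morphism $Q_G\colon G\times\overline{\Omega}\times G\twoheadrightarrow\mathcal{X}$ is faithfully flat and locally of finite presentation. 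I expect no genuine obstacle here: the only points carrying content are the monomorphism and equivalence-relation verifications, and these have already been prepared by \Cref{equivalencerelation}, \Cref{graphlemma} and \Cref{algebraicspace}; everything else is a direct citation.
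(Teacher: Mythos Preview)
Your proposal is correct and follows exactly the same route as the paper: identify $\mathcal{X}$ with the quotient of $\Gamma\rightrightarrows G\times\overline{\Omega}\times G$ via \Cref{algebraicspace}, use \Cref{graphproperty} for the fppf conditions, and invoke Artin's theorem \cite[Corollaire~(10.4)]{Champsalgebrique}. The paper's own proof is a two-line citation of \Cref{graphproperty} and Artin's result, leaving the equivalence-relation and monomorphism verifications implicit; you have simply made those steps explicit, which is harmless and arguably clearer.
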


\begin{proof}
     By a result due to Artin which says that the quotient of an algebraic space with respect to an fppf relation is again an algebraic space (see \cite[corollaire~(10.4)]{Champsalgebrique} or \cite[04S6]{stacks-project}), we are reduced to showing that the two $S$-morphisms $$\pr_{123},\pr_{456}:\Gamma\rightarrow G\times\overline{\Omega}\times G$$ 
     are flat and locally of finite presentation, where again we implicitly identify the target of the projection $\pr_{456}$ with $ G\times\overline{\Omega}\times G$ by switching the last two coordinates.
     
     Observe that $\Gamma$ is isomorphic to $\pr_{12345}(\Gamma)=\mathcal{U}$ which is, by definition, an open subscheme of  $(G\times\overline{\Omega}\times G)\times( G\times G)$. Since $G$ is flat and locally of finite presentation over $S$, by base change, hence so is the morphism $\pr_{123}\vert_{\mathcal{U}}$. To show that $\pr_{456}$ is so, it suffices to note that $\Gamma$ is symmetric under the permutation $(g_1,x,g_2,g_1',g_2',y)\mapsto(g_1',y,g_2',g_1,g_2,x)$ which follows from \Cref{algebraicspace} and that the relation $\sim$ is symmetric \Cref{equivalencerelation}.
\end{proof}

Our next goal is to show that $\mathcal{X}$ is, in fact, a scheme. To do this, we first endow $\mathcal{X}$ with a group action of $G\times G$. 

\begin{definition-proposition}\label{definitionofgrpaction}
    For any $S$-scheme $S'$, consider $a=(a_1,a_2)\in (G\times G)(S')$ and an $x\in \mathcal{X}(S')$ represented by a section
$(g_1,y,g_2)\in( G\times\overline{\Omega}\times G)(S'')$, where $S''$ is an fppf cover of $S'$. We define $ax$ to be the section of $\mathcal{X}(S')$ represented by the section 
$$(a_1g_1, y, a_2g_2)\in ( G\times\overline{\Omega}\times G)(S'').$$
We denote by 
    $$\Theta\colon G\times \mathcal{X}\times G\longrightarrow \mathcal{X}$$
    the resulting action of $G\times G$ on $\mathcal{X}$.
\end{definition-proposition}

\begin{proof}
   We need to check that the action of $G\times G$ on $\mathcal{X}$ is well defined.
   Fix a test $S$-scheme $S'$.
   Let $g=(a_1,a_2)\in (G\times G)(S')$. 
   Let $(g_1, y, g_2)$ and $(g_1',y',g_2')$ be two sections of $G\times\overline{\Omega}\times G$ valued in two different fppf covers $S_1''$ and $S_2''$ of $S'$. By pulling back the two sections to $S_1''\times_{S'} S_2''$, we can assume that $S_1''=S_2''$, which we rename to $S''$. By \Cref{strictness} and \cite[exposé~XVIII, propostion~1.7]{SGA3II}, we can find a section $(b_1, b_2)$ of $G\times G$ valued in some fppf cover of $S''$ such that $\pi(b_1a_1g_1, y, b_2a_2g_2)$ and $\pi(b_1a_1g_1',y',b_2a_2g_2')$ are both well defined. Then we conclude by \Cref{choiceofequivalence}.
\end{proof}

The following lemma is inspired by the fact that the group scheme which is associated to a birational group law on a scheme $X$ contains $X$ as an open schematically dense subscheme (\cite[exposé~XVIII, théorème~3.7 (ii)]{SGA3II}). 
The proof can be done via a direct verification of the definition of being an open immersion, which is, in spirit, similar to \cite[Lemma~3.13]{groupschemeoutofbirationallaw}. Here, we give a simplified proof.

\begin{theorem}\label{representedopenimmer}
    The morphism $\mathbf{j}\colon \overline{\Omega}\rightarrow \mathcal{X}$ which takes a section $a\in\overline{\Omega}(S')$ to the equivalence class represented by $(e,a,e)$ in $\mathcal{X}(S')$, is represented by an open immersion.
\end{theorem}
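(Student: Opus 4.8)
The plan is to verify the definition of being represented by an open immersion directly: for every $S$-scheme $T$ and every morphism $T\to\mathcal{X}$, I must show that the fiber product $\overline{\Omega}\times_{\mathcal{X}}T$ is representable by an open subscheme of $T$. Since $\mathcal{X}$ is the fppf quotient of $G\times\overline{\Omega}\times G$ and the $(G\times G)$-translates of $\mathbf{j}(\overline{\Omega})$ will cover $\mathcal{X}$ (this is what the group action of \Cref{definitionofgrpaction} buys us), it suffices, after an fppf base change on $T$, to treat a morphism $T\to\mathcal{X}$ that lifts to a section $(g_1,y,g_2)\in(G\times\overline{\Omega}\times G)(T)$; moreover, translating by $(g_1^{-1},g_2^{-1})\in(G\times G)(T)$ using the action $\Theta$ and the fact that $\Theta$ identifies $\mathbf{j}(\overline{\Omega})$ with a translate of itself, I reduce to the case where the given section is $(e,y,e)$, i.e.\ to the composite $T\xrightarrow{y}\overline{\Omega}\xrightarrow{\mathbf{j}}\mathcal{X}$. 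So the crux is the following: given $y\in\overline{\Omega}(T)$, the subfunctor of $T$ parameterizing those $T'\to T$ such that $\mathbf{j}\circ y$ factors (fppf-locally) through $\mathbf{j}$ again — equivalently such that $(e,y,e)_{T'}\sim(e,y',e)_{T'}$ for some $y'\in\overline{\Omega}(T')$ — is an open subscheme of $T$.

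To identify this subfunctor, I will use the graph description of the equivalence relation from \Cref{algebraicspace} and \Cref{graphlemma}: two sections $(g_1,x,g_2)$ and $(g_1',y',g_2')$ of $G\times\overline{\Omega}\times G$ are equivalent if and only if they come from a section of $\Gamma\subset (G\times\overline{\Omega}\times G)\times(G\times G)\times\overline{\Omega}$, where $\Gamma$ is the graph of $\phi$ and $\mathcal{U}=\pr_{12345}(\Gamma)$ is an \emph{open} subscheme of $(G\times\overline{\Omega}\times G)\times(G\times G)$. Specializing the first slot to $(e,y,e)$ and the second slot $(g_1',g_2')$ to $(e,e)$, the pair $(e,y,e)\sim(e,y',e)$ with $y'=\phi(e,y,e,e,e)=\pi(e,\pi(e,y,e),e)$ precisely when $(e,y,e,e,e)\in\mathcal{U}$. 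Pulling back $\mathcal{U}$ along the map $T\xrightarrow{(e,y,e,e,e)} (G\times\overline{\Omega}\times G)\times(G\times G)$ gives an open subscheme $T_0\subset T$; by \Cref{theoremrationalaction} the point $(e,y,e)$ lies in $\Dom(\pi)$ for all $y$, so in fact $\pi(e,\pi(e,y,e),e)=y$ and $T_0=T$ — wait: the point is rather that on $T_0$ the section $\mathbf{j}\circ y$ does lie in the image of $\mathbf{j}$, tautologically, via $y$ itself. The real content is the converse: I must show that if $\mathbf{j}\circ y$ \emph{does} lie in the image of $\mathbf{j}$ over some $T'$, i.e.\ $(g_1,z,g_2)_{T'}\sim (e,y,e)_{T'}$ for some $(g_1,z,g_2)$ with $z\in\overline{\Omega}$, then this already happens over an open of $T'$ not depending on the choice — and here is where \Cref{lemma/groupnature} enters: it controls exactly when applying $\pi$ keeps one inside the ``group-like'' locus, and its analogue lets us detect membership in $\mathbf{j}(\overline{\Omega})$ by an open condition. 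So the open subscheme representing $\overline{\Omega}\times_{\mathcal{X}}T$ is cut out by the condition that the relevant instance of $\pi$ (bringing $(g_1,z,g_2)$ back to $(e,\cdot,e)$ form) is defined, which is open since $\Dom(\pi)$ and $\mathcal{U}$ are open.

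Finally I must check two things to conclude the open immersion is an open \emph{immersion} and not merely an open map: that $\mathbf{j}$ is a monomorphism, and that the formation of the open subscheme is compatible with base change on $T$ so that fppf descent applies. The monomorphism property follows because $(e,a,e)\sim(e,a',e)$ forces $a=a'$: if $\pi(b_1,a,b_2)=\pi(b_1,a',b_2)$ over an fppf cover with $(b_1,a,b_2),(b_1,a',b_2)\in\Dom(\pi)$, then by \Cref{strictness} the rational morphism $\Phi\colon(g_1,x,g_2)\mapsto(g_1,\pi(g_1,x,g_2),g_2)$ is birational with inverse $\Psi$, and applying (the relevant translate of) $\Psi$ recovers $a=a'$; alternatively one invokes \Cref{theoremrationalaction}(3), which says $\pi|_{\{e\}\times\overline{\Omega}\times\{e\}}$ is the identity projection, giving the conclusion immediately over the locus $b_1=b_2=e$ and then in general by \Cref{choiceofequivalence}. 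Base-change compatibility is built into the constructions, since $\pi$, $\Gamma$, $\mathcal{U}$, and the group action $\Theta$ are all formed by fppf-local, base-change-stable recipes. \textbf{The main obstacle} I anticipate is the reduction and descent bookkeeping in the first paragraph — making precise that it suffices to treat sections of the form $(e,y,e)$ and that the resulting open subscheme of $T$ glues fppf-locally — together with pinning down, via \Cref{lemma/groupnature} and \Cref{strictness}, the precise open condition on $T$ that characterizes the fiber product; the verification that this condition is open is then routine given that $\Dom(\pi)$ and $\mathcal{U}$ are open and $\{e\}\times\overline{\Omega}\times\{e\}\subset\Dom(\pi)$.
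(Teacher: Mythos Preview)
Your approach—directly verifying that the fiber product $\overline{\Omega}\times_{\mathcal{X}}T$ is an open subscheme of $T$—is in principle viable and differs from the paper's route, but your execution has a genuine gap.

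The gap is in your reduction step. When you translate the given section $(g_1,z,g_2)$ by $(g_1^{-1},g_2^{-1})$ to reach $(e,z,e)$, you are composing $T\to\mathcal{X}$ with the automorphism $\Theta(g_1^{-1},-,g_2^{-1})$ of $\mathcal{X}$. This automorphism moves the subsheaf $\mathbf{j}(\overline{\Omega})$ to its translate $(g_1^{-1},g_2^{-1})\cdot\mathbf{j}(\overline{\Omega})$, so after translation you are computing the fiber product with the \emph{translate}, not with $\mathbf{j}(\overline{\Omega})$ itself. That is exactly why you reach the trivial conclusion $T_0=T$: the section $(e,z,e)$ manifestly lies in $\mathbf{j}(\overline{\Omega})$, but this says nothing about whether $(g_1,z,g_2)$ does. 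The correct open condition, via the graph description in \Cref{algebraicspace}, is that $(g_1,z,g_2,e,e)\in\mathcal{U}$: this is precisely when there exists $y=\phi(g_1,z,g_2,e,e)$ with $(g_1,z,g_2)\sim(e,y,e)$, and pulling $\mathcal{U}$ back along $T\xrightarrow{(g_1,z,g_2,e,e)} (G\times\overline{\Omega}\times G)\times(G\times G)$ cuts out the sought open. You would then still owe a check that this open is independent of the chosen fppf-local lift $(g_1,z,g_2)$ and hence descends—this is the bookkeeping you flagged, and it is not automatic. Also, \Cref{lemma/groupnature} is not relevant here: it detects membership in $\Omega_G$, not in the image of $\mathbf{j}$.

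The paper's proof avoids this entirely by a different route: after the monomorphism check (your argument for that part is fine), it shows $\mathbf{j}$ is \emph{formally smooth} via an infinitesimal lifting argument over a strictly henselian base, producing a section $(c_1,c_2)$ of a suitable $T'$-dense open using \Cref{strictness} and \cite[\S~2.3, Proposition~5]{BLR}, and then taking $c=\phi(c_1g_1,x,c_2g_2,c_1,c_2)$ as the lift. Since a formally smooth monomorphism locally of finite presentation between algebraic spaces is an open immersion (via \cite[0DP0, 0B8A, 025G]{stacks-project}), this finishes. The paper's approach is shorter and sidesteps precisely the descent bookkeeping you identified as the main obstacle.
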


\begin{proof}
    We first verify that $\mathbf{j}$ is a monomorphism.
    For an $S$-scheme $S'$, let $a_1,a_2\in\overline{\Omega}(S')$ be such that $\mathbf{j}(a_1)=\mathbf{j}(a_2)$. Then there exist an fppf cover $S''$ of $S'$ and a section $(g_1,g_2)\in( G\times G)(S'')$ such that $\pi(g_1,a_1,g_2)=\pi(g_1,a_2,g_2)$. Then by \Cref{choiceofequivalence}, we get $a_1=a_2$ in $\overline{\Omega}(S'')$, hence $a_1=a_2$ in $\overline{\Omega}(S')$.

    To show that $\mathbf{j}$ is an open immersion, we claim that $\mathbf{j}$ is formally smooth. Granted the claim, since by \Cref{algebraicspace}, $\mathcal{X}$ is locally of finite type over $S$, and so is $\overline{\Omega}$, the monomorphism $\mathbf{j}$ is locally of finite presentation by \cite[06Q6]{stacks-project}. Hence, by \cite[0DP0]{stacks-project}, $\mathbf{j}$ is smooth. Futhermore, by \cite[0B8A]{stacks-project} (due to David~Rydh), $\mathbf{j}$ is representable by schemes, and so an open immersion by \cite[025G]{stacks-project}.

    Now we check the formal smoothness of $\mathbf{j}$. Given an infinitesimal thickening $H\hookrightarrow H'$ of $S$-schemes and the following commutative diagram 
    $$\xymatrix{
H \ar@{^{(}->}[d] \ar[r]^a &\overline{\Omega}\ar[d]^{\mathbf{j}}\\
H' \ar[r]_{(g_1,x,g_2)}   \ar@{.>}[ur]^{?}       &\mathcal{X},}$$
where $(g_1,x,g_2)\in(G\times \overline{\Omega}\times G)(H')$,
we need to find a section in $\overline{\Omega}(H')$ fitting into the commutative diagram. Since $\mathbf{j}$ is a monomorphism, by working étale locally and a limit argument, we can assume that $H'$ is the spectrum of a strict henselian local ring. Let $\mathcal{V}\subset \Dom(\pi_{H})$ be an open subscheme obtained by applying \Cref{strictness} to $\pi_{H}$, and let 
$$\mathcal{M}\coloneq (\Dom(\pi_{H'})_x(g_1^{-1},g_2^{-1} ))_H\bigcap \mathcal{V}_a\subset (G\times G)_{H}$$
which is an $H$-dense open subscheme by \Cref{theoremrationalaction}. Since $H\hookrightarrow H'$ is an infinitesimal thickening, $\mathcal{M}$ is $H'$-dense. Now \cite[\S~2.3, Proposition~5]{BLR} gives rise to a section $(c_1,c_2)\in\mathcal{M}(H')$. By the choice of $\mathcal{M}$ and the commutativity of the square, $c\coloneq \phi(c_1g_1,x,c_2g_2, c_1,c_2)\in \overline{\Omega}(H')$ is well-defined, where $\phi$ is introduced in \Cref{graphlemma}. By the definition of $(c_1,c_2)$, we have that
$\pi(c_1g_1,x,c_2g_2)=\pi(c_1,c,c_2),$
and by \Cref{choiceofequivalence}, we have $\pi(c_1,c,c_2)=\pi(c_1,a,c_2)$, as desired.
\end{proof}

\begin{proposition}\label{schematicnature}
    The algebraic space $\mathcal{X}$ is a separated, smooth, finitely presented scheme over $S$.
\end{proposition}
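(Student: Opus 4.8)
The plan is to promote the algebraic space $\mathcal{X}$ to a scheme using the open immersion $\mathbf{j}$ of \Cref{representedopenimmer} together with the $(G\times G)$-action of \Cref{definitionofgrpaction}, and then to read off smoothness, finite presentation and separatedness from the presentation $Q_G\colon G\times\overline{\Omega}\times G\twoheadrightarrow\mathcal{X}$ and its relation $\Gamma$. First I would observe that, by \Cref{definitionofgrpaction}, $Q_G(g_1,y,g_2)=(g_1,g_2)\cdot\mathbf{j}(y)$, so that $Q_G$ factors through the $(G\times G)$-action on the open subscheme $\mathbf{j}(\overline{\Omega})\cong\overline{\Omega}$ of \Cref{representedopenimmer}; since $Q_G$ is faithfully flat it is surjective, so the $(G\times G)$-translates of $\mathbf{j}(\overline{\Omega})$ cover $\mathcal{X}$. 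As $\overline{\Omega}=U^-\times\overline{T}\times U^+$ is affine over $S$ and $G\times G$ is smooth over $S$ (so that the relevant translating sections spread out over suitable neighbourhoods), this produces an open cover of $\mathcal{X}$ by open subschemes affine over $S$; gluing these along their overlaps inside $\mathcal{X}$ shows that $\mathcal{X}$ is a scheme, cf. \cite[Section~6.6, Theorem~2]{BLR} and the analogous argument in \cite[exposé~XVIII, théorème~3.7]{SGA3II}. The technical point here is to realise these translates as genuine open subschemes of $\mathcal{X}$ rather than merely of base changes of $\mathcal{X}$.

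For smoothness and finite presentation, I would identify the relation $\Gamma$ of \Cref{algebraicspace} with $(G\times\overline{\Omega}\times G)\times_{\mathcal{X}}(G\times\overline{\Omega}\times G)$, so that $\pr_{123}$ is the first projection, i.e.\ the base change of $Q_G$ along $Q_G$. By \Cref{graphproperty}, $\pr_{123}$ is flat and locally of finite presentation, and by \Cref{graphlemma} its geometric fibres are the open subschemes of copies of $G\times G$ on which $\phi$ is defined, which are smooth; hence $\pr_{123}$ is smooth, and by fppf descent so is $Q_G$, which is moreover surjective. Since $G\times\overline{\Omega}\times G\to S$ is smooth and factors through the smooth surjection $Q_G$, smooth descent gives that $\mathcal{X}\to S$ is smooth, in particular locally of finite presentation; quasi-compactness follows from surjectivity of $Q_G$ and quasi-compactness of the source, and quasi-separatedness from quasi-compactness of $\Gamma\cong\mathcal{U}$ over $G\times\overline{\Omega}\times G$. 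Hence $\mathcal{X}\to S$ is finitely presented.

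The hard part is separatedness. Covering $\mathcal{X}$ by the $(G\times G)$-translates of $\overline{\Omega}$, which are affine over $S$, separatedness amounts to the valuative statement that two $R$-points of $\mathcal{X}$, with $R$ a valuation ring, agreeing over $\Frac(R)$ must coincide. Using the birationality and strictness of the rational action $\pi$ established in \Cref{strictness}, the common generic value can be moved into a single chart $\overline{\Omega}$ by an element of $G\times G$, and the point is to arrange such a transport compatibly over $\Spec(R)$; for this one invokes the Bruhat (Iwahori) decomposition of $G$ over the valuation ring, which forces the two limits to agree. Equivalently — and this is perhaps the cleanest formulation — one shows that $\Gamma\to(G\times\overline{\Omega}\times G)\times_S(G\times\overline{\Omega}\times G)$ is a closed immersion, which descends to separatedness of $\mathcal{X}$ along the smooth surjection $(G\times\overline{\Omega}\times G)\times_S(G\times\overline{\Omega}\times G)\to\mathcal{X}\times_S\mathcal{X}$. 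This step is the one that genuinely uses the strictness of $\pi$, whereas scheme-ness and smoothness/finite presentation are essentially formal once \Cref{representedopenimmer}, \Cref{definitionofgrpaction} and \Cref{graphproperty} are in hand.
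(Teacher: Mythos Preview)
Your approach to scheme-ness, smoothness, and finite presentation is essentially correct and aligns with the paper's, which reduces to $S=\Spec(\mathbb{Z})$ and then invokes \cite[Section~6.6, Theorem~2~(a),(b)]{BLR} in one stroke: once $\overline{\Omega}$ is an open subscheme of $\mathcal{X}$ and the $(G\times G)$-translates cover, that theorem delivers scheme-ness, separatedness, and smoothness simultaneously. Your descent argument for smoothness via the relation $\Gamma$ is a valid alternative unpacking of this.

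Where you go astray is separatedness. You call it ``the hard part'' and propose either the valuative criterion via the Iwahori decomposition or the closedness of $\Gamma$ in the double product. In the paper, separatedness is \emph{not} singled out at all: it is part of \cite[Section~6.6, Theorem~2~(a)]{BLR}, which you already cite for scheme-ness. The mechanism there is that each $(G\times G)$-translate of $\overline{\Omega}$ is affine over $S$, and the gluing data among these affine-over-$S$ charts forces the diagonal to be closed; no valuative argument is needed. Your Iwahori suggestion is misplaced: that is precisely the tool the paper uses later, in \Cref{properness}, to prove \emph{properness} (existence of limits), not separatedness (uniqueness of limits). Conflating the two makes the present proposition look harder than it is and imports machinery that belongs to a later step.

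A minor point: the paper's reduction to $S=\Spec(\mathbb{Z})$ via functoriality of the construction is worth making explicit, both to meet any Noetherian hypotheses in the BLR reference and to sidestep your acknowledged ``technical point'' about realising translates as open subschemes over a general base.
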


\begin{proof}
    By the functoriality of the construction of $\mathcal{X}$ (\Cref{defintionofcompactification}) and base change, we may assume that $S=\Spec(\mathbb{Z})$.
    By \Cref{representedopenimmer}, the $\overline{\Omega}$ is an open subspace of $\mathcal{X}$. By the definition of the action of $G\times G$ on $\mathcal{X}$ (\Cref{definitionofgrpaction}), we have $\mathcal{X}=(G\times G)\cdot\overline{\Omega} $. Then the proposition follows from \cite[Section~6.6, Theorem~2 (a) and (b)]{BLR} except for the smoothness of $\mathcal{X}$ over $S$. 
    
    To show the \emph{smoothness} of $\mathcal{X}$ over $S$, since $\mathcal{X}$ is locally of finite presentation over $S$, we can work étale locally over $S$ and use a limit argument to assume that $S$ is the spectrum of a strictly henselian local ring.
    In this case, by the smoothness of $\overline{\Omega}$ over $S$, we only need to show that $\mathcal{X}$ is covered by the translates of $\overline{\Omega}$ by the sections in $(G\times_S G)(S)$. 
    
    For this, let $S'$ be a test $S$-scheme, and let $\overline{x}\in \mathcal{X}(S')$ represented by $(g_1,\omega,g_2)\in (G\times_{S} \overline{\Omega}\times_S G)(S'')$ where $S''$ is an fppf-cover of $S'$. Without loss of generality, we assume that $S''=S'$. It suffices to find an open covering $S'=\bigcup_{i\in I} S'_{i}$ such that, for each open $S'_i$, there exists a section $(c_1^i,c_2^i)\in(G\times_S G)(S)$ such that $\omega'\coloneq \phi(g_1\vert_{S'_{i}},\omega\vert_{S'_{i}},g_2\vert_{S'_{i}},c_1^i,c_2^i)$ is well-defined, where $\phi$ is defined in \Cref{graphlemma}. If so, by \Cref{algebraicspace}, we have that $(c_1^i,\omega',c_2^i)\sim (g_1\vert_{S'_{i}},\omega\vert_{S'_{i}},g_2\vert_{S'_{i}})$ which means that $\overline{x}\vert_{S_i}$ lies in $(c_1^i,c_2^i)\cdot \overline{\Omega}$, as desired. 

    Without loss of generality, we assume that $S''=S'$. Since the problem is local over $S'$, by working Zariski locally on $S'$ and a limit argument, we can assume that $S'$ is a local scheme. In this case, by the definition of $\phi$ and \Cref{theoremrationalaction}, the condition on the sought $(c_1,c_2)$ amounts to requiring that $(c_1,c_2)(S')\subset U$, where $U\subset (G\times_S G)\times_{S} S'$ is an open $S'$-dense subscheme. Since now $S'$ is a local scheme, it suffices to require that the closed point of $S'$ be sent into $U$ by $(c_1,c_2)$. Then we are reduced to the case where $S'=\Spec(k)$ with $k$ a field. Thus, the existence of such a section $(c_1,c_2)$ is ensured by Artin's \cite[\S~5.3, Lemma~7]{BLR} which asserts that, for any point $t\in S$, the set $\{a(t)\vert a\in (G\times_S G)(S)\}$ is dense in the fiber $(G\times_S G)_t$.
\end{proof}

\subsection{Relation with wonderful compactifications}\label{relationwithwonder}
We are going to show that the geometric fibers of $\mathcal{X}$ are classical wonderful compactifications (which we reviewed in \Cref{reviewofwonderful}). For this, recall that the wonderful compactification $\mathbf{X}_s$ of $G_{\overline{k(s)}}$ contains an open subscheme called the big cell which is canonically isomorphic to $(\overline{\Omega})_{\overline{k(s)}}$, see, for instance, \cite[Proposition~6.1.7]{BrionKumar}.

\begin{proposition}\label{groupembedtocompac}
    The open embedding of \Cref{Omegaembedformula} extends to 
    a $(G\times G)$-equivariant open immersion $\mathbf{i}\colon G\hookrightarrow\mathcal{X}$    
    such that every geometric fiber of $\mathcal{X}$ over a point $s\in S$ is identified with the wonderful compactification of $G_{\overline{k(s)}}$. 
\end{proposition}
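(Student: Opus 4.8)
The plan is to construct $\mathbf i$ by descending the quotient map $Q_G$ along the big cell, then to identify each geometric fibre with a classical wonderful compactification by exhibiting both as the \emph{same} sheaf quotient of translates of the big cell, and finally to deduce that $\mathbf i$ is an open immersion from that fibral computation.

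First I would construct $\mathbf i$. Let $m\colon G\times\Omega_G\times G\to G$ be the smooth surjective $(G\times G)$-equivariant morphism $(g_1,g,g_2)\mapsto g_1gg_2^{-1}$, and consider the composite $Q_G\circ(\mathrm{id}_G\times\nu\times\mathrm{id}_G)\colon G\times\Omega_G\times G\to\mathcal X$, with $\nu$ as in \Cref{Omegaembedformula}. This composite factors through $m$: if $g_1gg_2^{-1}=g_1'g'g_2'^{-1}=:\gamma$ over a test $S$-scheme, then with $(a_1,a_2):=(\gamma^{-1},e)$ the triples $(a_1g_1,\nu(g),a_2g_2)$ and $(a_1g_1',\nu(g'),a_2g_2')$ both lie in $\Dom(\pi)$ and, by \Cref{theoremrationalaction}~(1), $\pi$ sends each to $\nu(e)$, so $(g_1,\nu(g),g_2)\sim(g_1',\nu(g'),g_2')$ and the two images in $\mathcal X$ agree. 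As $m$ is an fppf cover and $\mathcal X$ is a sheaf, this yields $\mathbf i\colon G\to\mathcal X$ with $\mathbf i\circ m=Q_G\circ(\mathrm{id}\times\nu\times\mathrm{id})$. Taking $g_1=g_2=e$ gives $\mathbf i|_{\Omega_G}=\mathbf j\circ\nu$, and writing $\gamma=m(g_1,g,g_2)$ and using \Cref{definitionofgrpaction} gives $\mathbf i(a_1\gamma a_2^{-1})=Q_G(a_1g_1,\nu(g),a_2g_2)=\Theta(a_1,\mathbf i(\gamma),a_2)$, so $\mathbf i$ is $(G\times G)$-equivariant. Finally, the open subschemes $n_w\Omega_G$ ($w$ in the Weyl group, $n_w\in\Norm_G(T)(S)$ from the Chevalley system) cover $G$ — this can be checked fibrewise, where it is the Bruhat decomposition — and on $n_w\Omega_G$ one has $\mathbf i=\Theta(n_w,-,e)\circ\mathbf j\circ\nu$ precomposed with the translation isomorphism $n_w\Omega_G\xrightarrow{\sim}\Omega_G$, a composite of open immersions and an automorphism of $\mathcal X$; hence each $\mathbf i|_{n_w\Omega_G}$ is an open immersion and $\mathbf i$ is étale.

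Next I would identify the geometric fibres. Fix a geometric point $s$ of $S$ with algebraically closed residue field $\bar k$. Since the formation of $\mathcal X$ commutes with base change (and by \Cref{algebraicspace}), $\mathcal X_s$ is the fppf-sheaf quotient of $(G\times\overline{\Omega}\times G)_{\bar k}$ by the relation of \Cref{relationonOmega} built from $\pi_s$. On the other hand, the classical wonderful compactification $\mathbf X_s$ of $G_{\bar k}$ (\Cref{reviewofwonderful}) has big cell $\overline{\Omega}_{\bar k}$, and by \Cref{theoremrationalaction}~(2) the restriction of its $(G\times G)_{\bar k}$-action to the locus landing in the big cell is $\pi_s$; by \Cref{translationbigcell} the translates of the big cell cover $\mathbf X_s$, so the action morphism $(G\times G)_{\bar k}\times\overline{\Omega}_{\bar k}\to\mathbf X_s$ is smooth surjective and exhibits $\mathbf X_s$ as the fppf-sheaf quotient by the equivalence relation it cuts out. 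These two relations coincide: two sections $((g_1,g_2),x)$, $((g_1',g_2'),x')$ have the same image in $\mathbf X_s$ iff $(g_1'^{-1}g_1,g_2'^{-1}g_2)\cdot x=x'$ in $\mathbf X_s$, and, after choosing fppf-locally a $(c_1,c_2)$ moving the common image back into the big cell, this holds iff $\pi_s(c_1g_1,x,c_2g_2)$ and $\pi_s(c_1g_1',x',c_2g_2')$ are both well defined and equal — which is exactly \Cref{relationonOmega}. So there is a $(G\times G)_{\bar k}$-equivariant isomorphism $\mathcal X_s\xrightarrow{\sim}\mathbf X_s$ that is the identity on the big cell; since $\mathbf i_s|_{(\Omega_G)_{\bar k}}=(\mathbf j\circ\nu)_s$ is the standard inclusion of the big cell of $G_{\bar k}$ into $\mathbf X_s$ and $(\Omega_G)_{\bar k}$ is schematically dense in the separated scheme $\mathbf X_s$, this isomorphism carries $\mathbf i_s$ to the canonical open immersion $G_{\bar k}\hookrightarrow\mathbf X_s$. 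Hence every geometric fibre of $\mathcal X$ is a wonderful compactification, and each $\mathbf i_s$ is an open immersion; as universal injectivity may be tested on geometric fibres, $\mathbf i$ is a universally injective étale morphism, hence an open immersion \cite[025G]{stacks-project}, which extends $\nu$ and is $(G\times G)$-equivariant — completing the proof.

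The step I expect to be the main obstacle is the identification of the two equivalence relations, i.e.\ matching the abstractly constructed rational action $\pi_s$ with the restriction of the genuine $(G\times G)$-action on $\mathbf X_s$. The comparison itself is supplied by \Cref{theoremrationalaction}~(2), but feeding it into the sheaf-quotient argument requires attention to the domains of definition of the rational maps and to the fact that, for a given $T$-valued point, one must pass to an fppf cover before one can translate it into the big cell; likewise, the smooth surjectivity of the action morphism onto the big cell and the resulting description of $\mathbf X_s$ as a quotient sheaf rest on \Cref{translationbigcell}, which is where the classical theory enters.
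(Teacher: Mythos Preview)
Your proof is correct, but it takes a different path from the paper's at the key step of showing $\mathbf i$ is an open immersion.

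The paper defines $\mathbf i(g)$ directly as the class of $(g,e,e)$, and its central claim is that $\mathbf i^{-1}(\overline{\Omega})=\Omega_G$ with $\mathbf i|_{\Omega_G}=\nu$. The inclusion $\Omega_G\subset\mathbf i^{-1}(\overline{\Omega})$ is easy; the reverse inclusion is obtained from \Cref{lemma/groupnature}, which says that if $\pi(g_1,x,g_2)\in\Omega_G$ for a generic $(g_1,g_2)$ then already $x\in\Omega_G$. Once this claim is in hand, the open-immersion property follows by equivariance and the fact that $(G\times G)$-translates of $\overline{\Omega}$ cover $\mathcal X$: on each translate, $\mathbf i$ is identified with $\nu$ up to automorphisms. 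Only afterwards does the paper turn to the geometric fibres, defining an explicit comparison morphism $\chi_s\colon\mathcal X_{\overline{k(s)}}\to\mathbf X_s$ by $(g_1,x,g_2)\mapsto g_1xg_2^{-1}$ and checking it is an isomorphism.

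You reverse the logic: you first establish \'etaleness of $\mathbf i$ via the cover $G=\bigcup_w n_w\Omega_G$ (which is indeed the Bruhat decomposition of $G/B$ pulled back to $G$), then identify the geometric fibres with the classical wonderful compactification by matching equivalence relations, and finally read off universal injectivity of $\mathbf i$ from the fibral open immersions $\mathbf i_s$. The upshot is that you never use \Cref{lemma/groupnature}. The cost is that your argument for ``$\mathbf i$ is an open immersion'' relies on the existence of the classical wonderful compactification (through \Cref{theoremrationalaction}(2) and \Cref{translationbigcell}), whereas the paper's argument via \Cref{lemma/groupnature} needs only \Cref{theoremrationalaction}(1) and is therefore genuinely intrinsic---in keeping with the paper's stated aim that the split construction not presuppose the classical theory. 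Your route is slicker if one is willing to use the classical compactification as a black box; the paper's route preserves logical independence from it.
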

    
\begin{proof}

For a point $s\in S$, by the functoriality of the definition of $\mathcal{X}$ (\Cref{defintionofcompactification}), the geometric fiber $\mathcal{X}_{\overline{k(s)}}$ of $\mathcal{X}$ is the quotient sheaf $G_{\overline{k(s)}}\times(\overline{\Omega})_{\overline{k(s)}}\times G_{\overline{k(s)}}/\sim_s$, where $\sim_s$ stands for the equivalence relation obtained by applying \Cref{relationonOmega} to $G_{\overline{k(s)}}$. To define a morphism from $\mathcal{X}_{\overline{k(s)}}$ to $\mathbf{X}_s$, by the universal property of sheafication, it suffices to define a morphism $\chi_s^+$ at the level of presheaves. For a $\overline{k(s)}$-scheme $K$ and a section $(g_1,x,g_2)\in(G_{\overline{k(s)}}\times(\overline{\Omega})_{\overline{k(s)}}\times G_{\overline{k(s)}})(K)$, we define the image of $(g_1,x,g_2)$ under $\chi_s^+$ to be $g_1xg_2^{-1}$ in $\mathbf{X}_s(K)$. The fact that $\chi_s^+$ is well-defined follows from the fibral description of $\pi$ in \Cref{theoremrationalaction}. The associated morphism from $\mathcal{X}_{\overline{k(s)}}$ to $\mathbf{X}_s$ is denoted by $\chi_s$. Note that, by \Cref{definitionofgrpaction}, this $\chi_s$ is $(G_{\overline{k(s)}}\times G_{\overline{k(s)}})$-equivariant. By \Cref{representedopenimmer} and the definition of $\chi_s$, the restriction $\chi_s\vert_{(\overline{\Omega})_{\overline{k(s)}}}$ is an open immersion onto the big cell of $\mathbf{X}_s$. Since $\chi_s$ is a monomorphism, it is then an open immersion. By \Cref{translationbigcell}, $\chi_s$ is an isomorphism.

For a section $g\in G(S')$ where $S'$ is an $S$-scheme, we define $\mathbf{i}\colon G\rightarrow\mathcal{X}$ by taking $\mathbf{i}(g)$ to the equivalence class of $(g,\nu(e),e)$ in $\mathcal{X}(S')$, where we recall that $e\in G(S)$ is the identity section. The fact that $\mathbf{i}$ is $(G\times G)$-equivariant follows from  \Cref{theoremrationalaction} (1) and \Cref{relationonOmega}.  Moreover, by our construction, it is clear that the fiber $\mathbf{i}_s$ is transferred into the natural open immersion $G_{\overline{k(s)}}\hookrightarrow \mathbf{X}_s$ by $\chi_s$.
    
    \begin{claim}\label{intersectionofbigcell}
        We claim that $G\bigcap \overline{\Omega}=\Omega_G$, here the intersection indicates the pullback of $\overline{\Omega}$, which is an open subscheme of $\mathcal{X}$ (\Cref{representedopenimmer}), along $\mathbf{i}$.
    \end{claim}

   \begin{proof}[Proof of \Cref{intersectionofbigcell}] 
       We choose a section $g\in\Omega_G(S')$. By \Cref{definitionofgrpaction}, $\mathbf{i}(g)$ is the section of $\mathcal{X}$ represented by the section $(g,\nu(e),e)\in( G\times\overline{\Omega}\times G)(S')$. Note that, by \Cref{relationonOmega}, $(g,\nu(e),e)\sim(e, \nu(g),e)$, where $\nu$ is defined in \Cref{Omegaembedformula}. Hence $\Omega_G\subset G\bigcap \overline{\Omega}$.
    To see that the inclusion is an identity, for each point $s\in S$, inside the geometric fiber $\mathcal{X}_{\overline{k(s)}}$, by \cite[Proposition~6.1.7]{BrionKumar}, we have that $(\Omega_G)_{\overline{k(s)}}=G_{\overline{k(s)}}\bigcap \overline{\Omega}_{\overline{k(s)}}$. Therefore, it follows that $\Omega_G= G\bigcap \overline{\Omega}$.
   \end{proof}
    
    Now as $\Omega_G$ is an open subscheme of $G$ \cite[exposé~XXII, proposition~4.1.2]{SGA3III} and $\mathcal{X}$ is covered by the translates of $\overline{\Omega}$ under the action of $G\times G$, it follows that $\mathbf{i}$ is an open immersion.
    
\end{proof}

\begin{remark}
    In the proof of \Cref{groupembedtocompac}, we construct $\mathbf{i}$ via the left action of $G$ on $\mathcal{X}$. In the view of the condition (1) of \Cref{theoremrationalaction}, it is equivalent to use the right action of $G$.
\end{remark}

\begin{remark}
Classical wonderful compactifications over a field can be realized as the closures of $G$ via the embedding into the projectivization of the endomorphism space of an algebraic representation of $G$ (\Cref{reviewofwonderful}) or via the embedding into certain Grassmannian of Lie algebra \cite[\S~6]{completesymmetricvarieties} or via the embedding into certain Hilbert scheme of flag variety \cite{compactificationHilbertsch}. All these three embeddings can be defined over general base scheme. The geometric fibers of the closure of $G$ inside an ambient projective space \emph{a priori} can not be identified with the classical wonderful compactifications because in general schematic closure does not commute with non flat base change. However, it is reasonable to expect that, over general base, the closure as before \emph{a posteriori} does commute with non flat base change by showing that the closure is isomorphic to our compactification $\mathcal{X}$.
\end{remark}

\subsection{Projectivity of the compactification}
In \Cref{relationwithwonder}, we saw that the geometric fibers of $\mathcal{X}$ over $S$ are projective. However, this does not imply the projectivity of $\mathcal{X}$ because the projectivity does not satisfy even Zariski descent, see \cite[Chapter~II, Exercise~7.13]{HartshorneGTM52} for a counterexample. In this section, we will show that $\mathcal{X}$ is projective over $S$.
We first obtain the properness of $\mathcal{X}$.

\begin{theorem}\label{properness}
    The scheme $\mathcal{X}$ is proper over $S$.
\end{theorem}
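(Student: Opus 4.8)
The plan is to check the valuative criterion for properness, the other two requirements — separatedness and finite presentation — being already recorded in \Cref{schematicnature}. By Noetherian approximation one reduces to $S$ Noetherian, hence to $S=\Spec R$ with $R$ a discrete valuation ring and $K=\Frac(R)$; using the form of the valuative criterion that permits replacing $(R,K)$ by an extension (cf. \cite{stacks-project}, \cite{EGAIV4}), one may moreover take $R$ complete with algebraically closed residue field. So, given $x_K\in\mathcal X(K)=\mathcal X_R(K)$, the task is to produce — after possibly passing to a discrete valuation ring dominating $R$ whose fraction field is finite over $K$ — a point $x_R\in\mathcal X(R)$ lifting $x_K$.

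First I would move $x_K$ into the big cell by a translation defined over $R$. Since $\mathcal X$ is quasi-compact and covered by the $(G\times G)$-translates of $\overline\Omega$ (\Cref{representedopenimmer}, \Cref{definitionofgrpaction}), after a finite extension of $K$ there is $(\gamma_1,\gamma_2)\in(G\times G)(K)$ with $(\gamma_1,\gamma_2)^{-1}\cdot x_K\in\overline\Omega(K)$. The flag schemes $G/B$ and $G/B^-$ are proper over $R$, so $(G/B)(R)=(G/B)(K)$ and likewise for $B^-$; as $G\to G/B$ and $G\to G/B^-$ are Zariski-locally trivial torsors under the special groups $B$, $B^-$, this gives the Iwasawa decompositions $G(K)=G(R)\,B^-(K)$ and $G(K)=G(R)\,B(K)$. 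Write $\gamma_1=h_1b_1$, $\gamma_2=h_2b_2$ with $h_1,h_2\in G(R)$, $b_1\in B^-(K)$, $b_2\in B(K)$. The big cell $\overline\Omega$ is stable under the action of $B^-\times B$: this follows from the description $\overline\Omega\cong U^-\times\overline T\times U^+$ together with the $T$-equivariance of $\nu$ from \Cref{Omegaembedformula} (it may also be read off fibrally from \Cref{theoremrationalaction}(2) and the known structure of the big cell of a wonderful compactification). Hence $z_K:=(b_1,b_2)\cdot\big((\gamma_1,\gamma_2)^{-1}\cdot x_K\big)\in\overline\Omega(K)$ and $x_K=(h_1,h_2)\cdot z_K$ with $(h_1,h_2)\in(G\times G)(R)$ acting on $\mathcal X_R$. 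It therefore suffices to extend $z_K$ to an $R$-point of $\mathcal X$.

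Write $z_K=(a^-,\sigma,a^+)\in(U^-\times\overline T\times U^+)(K)$. The obstruction is that $\overline\Omega\cong\mathbb{A}^{\dim G}$ is not proper, so the coordinates of $a^\pm$ and $\sigma$ may have poles. Here I would use the arithmetic of $G(K)$ over the complete discrete valuation ring $R$, i.e. the Iwahori (affine Bruhat) decomposition, together with the adjointness of $G$: since $G$ is adjoint, the simple roots form a $\mathbb{Z}$-basis of $X^*(T)$ and the fundamental coweights a $\mathbb{Z}$-basis of $X_*(T)$, so the valuation vector of $\sigma$ is matched exactly by a cocharacter of $T$; correspondingly the closure of $T$ in $\mathcal X$ is the \emph{complete} toric variety attached to the fan of Weyl chambers in $X_*(T)_{\mathbb{R}}$, of which $\overline T$ is the affine chart of the dominant cone. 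Conjugating $z_K$ by a suitable Weyl representative $n_w\in\Norm_G(T)(R)$ — whose effect on $\overline\Omega$ is the composite of the rational maps $f_i$ of \Cref{singlereflectrational} (cf. \Cref{w_0invariantsub}) — and translating further by the $U^\pm(R)$-parts provided by the Iwahori factorization, one arrives at a section of $\overline\Omega(R)$. Tracing back the translations, all of which lie in $(G\times G)(R)$, yields the desired $x_R\in\mathcal X(R)$, which completes the valuative criterion; the identification of the geometric fibers with wonderful compactifications in \Cref{groupembedtocompac} ensures consistency with the classical picture.

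The main obstacle is precisely this last step. Because the chart $\overline\Omega$ is an affine space rather than a proper scheme, one cannot simply transplant the lifting problem into it: one must use the Iwasawa and Iwahori decompositions of $G(K)$ — and the fact that adjointness makes $X^*(T)$ spanned by the simple roots, so that the valuation data of the $\overline T$-coordinate of $z_K$ is realized by a single cocharacter — to absorb the poles of the big-cell coordinates into the $(G\times G)(R)$-action without leaving $\mathcal X(R)$. The reduction to a complete discrete valuation ring, the entry into the big cell, and the descent bookkeeping are routine by comparison.
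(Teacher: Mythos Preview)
Your strategy is the same as the paper's first proof (valuative criterion plus group decompositions over a complete DVR), but you take a detour that leaves the decisive step genuinely open. The paper bypasses your entire maneuver through $\overline{\Omega}(K)$ and the Iwasawa decomposition by invoking a standard refinement of the valuative criterion (\cite[Lemma~4.1.1]{valuationcriopendense}): since $G$ is fiberwise dense in $\mathcal X$, it suffices to extend an arbitrary $x\in G(K)$, not $x\in\mathcal X(K)$. Then the Iwahori--Cartan decomposition $G(K)=\bigcup_{\lambda\in X_*(T)} G(R)\,\lambda(\epsilon)\,G(R)$ applies directly to $x$ itself, and after a Weyl translate (which lies in $G(R)$) one may take $\lambda$ dominant. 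At that point $\nu(\lambda(\epsilon^{-1}))=(\epsilon^{\langle\alpha_i,\lambda\rangle})_i\in\overline T(R)$ by the very formula \eqref{Omegaembedformula}, and the proof is finished in one line.

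Your route, by contrast, lands at $z_K=(a^-,\sigma,a^+)\in\overline\Omega(K)$ with $a^\pm\in U^\pm(K)$ possibly having poles and $\sigma$ possibly on the boundary $\overline T\setminus T$, and the sketch you give for extending $z_K$ does not close: the $U^\pm(K)$-components cannot be absorbed by $U^\pm(R)$-translations; conjugation by $n_w$ acts on $\overline\Omega$ only \emph{rationally} via the $f_i$, so need not be defined at $z_K$; and when $\sigma\notin T(K)$ you are outside $G(K)$, so no Iwahori or Cartan decomposition applies to $z_K$ as such. The phrase ``the $U^\pm(R)$-parts provided by the Iwahori factorization'' has no clear referent here. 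The missing idea is precisely to test only $G(K)$-points and apply Cartan to the point itself rather than to auxiliary translating elements.

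The paper also records a second, one-line proof: since $\mathcal X$ is separated, finitely presented, and has a section, properness can be checked on geometric fibers by \cite[corollaire~15.7.11]{EGAIV3}, and these are classical wonderful compactifications, hence projective.
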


\begin{proof}
    \emph{First method}: we are going to verify the valuative criterion for properness \cite[théorème~7.3.8]{EGA2}. Since, by \Cref{schematicnature}, $\mathcal{X}$ is separated over $S$, and using \cite[remarques~7.3.9]{EGA2}, we can assume that the base scheme $S$ is the spectrum of a complete discrete valuation ring $R$. Let $K$ be the fraction field of $R$, and let $\epsilon\in R$ be a uniformizer.
    
    Since $\mathcal{X}$ is separated over $S$, using \cite[Lemma~4.1.1]{valuationcriopendense}, it suffices to show that, for any $x\in G(K)$, there exists a section $y\in \mathcal{X}(R)$, such that the following diagram
     $$\xymatrix{
\Spec (K) \ar[r]^-{x}\ar[d] & G\ar[d]^{\mathbf{i}}\\
\Spec (R) \ar[r]^-{y} & \mathcal{X}
}$$ 
commutes, where $\mathbf{i}$ is the open immersion from \Cref{groupembedtocompac}. By the Iwahori--Cartan decomposition (see, \cite[Remark~3.5, Theorem~4.1]{Iwahoricdecomp}), we have 
$$G(K)=G(R)T(K)G(R)=\bigcup_{\lambda\in X_{\ast}(T)}G(R)\lambda(K)G(R),$$
where $X_{\ast}(T)$ is the cocharacter lattice of $T$. Since the Weyl group acts transitively (and freely) on the Weyl chambers (\cite[exposé~XXI, corollaire~5.5]{SGA3III}), it is enough to show that the section $\lambda(\epsilon^{-1})$ with $\lambda$ a dominant cocharacter can be lifted to an $R$-section of $\overline{T}$. This follows from \Cref{Omegaembedformula}.

\emph{Second method}: since $\mathcal{X}$ is separated over $S$ and of finite presentation over $S$ (\Cref{schematicnature}) and $\mathcal{X}(S)\neq \emptyset$ (\Cref{groupembedtocompac}), by \cite[corollaire~15.7.11]{EGAIV3}, the properness of $\mathcal{X}$ over $S$ follows from the properness of the geometric fibers of $\mathcal{X}$ (\Cref{groupembedtocompac}).
\end{proof}

\begin{remark}\label{remarkproperness}
    Compared with the second proof which is extrinsic, the first proof of \Cref{properness} is intrinsic in the sense that it does not need any embedding of $G$ to obtain the properness. Even in the case when the base scheme is a field, the first proof can serve as an equivalent but intrinsic reason for the properness of classical wonderful compactification. We expect that the first method can help when we explore relative version of more general wonderful varieties (in the sense of Luna \cite{Lunawonderfulvarieties}) whose geometries are less explored in positive characteristic.
\end{remark}

Now we can strengthen \Cref{properness} to get the projectivity of $\mathcal{X}$ over $S$. For this, the crux is to construct an $S$-ample line bundle over $\mathcal{X}$. The existence of such a line bundle follows from a general fact due to M. Raynaud on which we elaborate in the following corollary. See also \Cref{descentampleCartier} (and its proof) for another way to construct $S$-ample line bundles over $\mathcal{X}$ (assuming \Cref{projectivity}).

\begin{corollary}\label{projectivity}
    The scheme $\mathcal{X}$ is projective over $S$.
\end{corollary}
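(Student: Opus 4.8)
The plan is to combine the already-established properness of $\mathcal{X}$ over $S$ (\Cref{properness}) with the explicit open cover of $\mathcal{X}$ by $(G\times G)$-translates of the affine big cell $\overline{\Omega}$. Since $\mathcal{X}$ is proper, it suffices to produce a relatively ample invertible sheaf, i.e.\ to exhibit $\mathcal{X}$ as a closed subscheme of a projective bundle over $S$, or equivalently to show quasi-projectivity (which together with properness gives projectivity by \cite[II, Thm.~7.6 or EGA~II~5.5.3]{HartshorneGTM52}). By the functoriality of the construction (\Cref{defintionofcompactification}) and base change, I would again reduce to $S=\Spec(\mathbb{Z})$, where $\mathcal{X}$ is Noetherian.

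The key point is that the complement $\mathcal{X}\setminus\overline{\Omega}$ carries a natural structure of an effective Cartier divisor whose associated invertible sheaf is relatively ample. Concretely: $\overline{\Omega}\cong U^-\times_S\overline{T}\times_S U^+$ is affine, so its complement in the separated scheme $\mathcal{X}$ is, locally on $\mathcal{X}$, the vanishing locus of a single function — more precisely, in each translate $(g_1,g_2)\cdot\overline{\Omega}$ the intersection with $\mathcal{X}\setminus\overline{\Omega}$ is cut out by the pullback of the coordinate functions $\mathbb{X}_1\cdots\mathbb{X}_l$ on $\overline{T}$ (the product of the simple-root coordinates), which is exactly the locus where $\overline{\Omega}$ fails to lie in $\Omega_G\cdot(\text{translate})$. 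Patching these local equations via the $(G\times G)$-action gives an effective Cartier divisor $\mathbf{D}=\mathcal{X}\setminus\overline{\Omega}$ (with multiplicities coming from the $l$ boundary components, matching the classical picture recalled in \Cref{introclassical}), and the invertible sheaf $\mathcal{L}_{\mathcal{X}}(\mathbf{D})$ is the candidate ample line bundle; this is essentially the content of \cite[Section~6.6, Theorem~2 (d)]{BLR}, which asserts precisely that for the scheme obtained by the gluing procedure the boundary divisor $\mathcal{X}\setminus\overline{\Omega}$ is relatively ample. Having such a divisor, quasi-projectivity of $\mathcal{X}$ over $S$ follows, and combined with \Cref{properness} we conclude that $\mathcal{X}$ is projective over $S$.

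The main obstacle I expect is verifying that the local equations for the boundary in the various translates $(g_1,g_2)\cdot\overline{\Omega}$ glue to a genuine \emph{global} effective Cartier divisor (rather than merely a $\mathbb{Q}$-divisor or a non-Cartier Weil divisor) and that the resulting line bundle is ample and not just semiample — i.e.\ that the complement of the boundary is affine, which here holds because $\overline{\Omega}$ itself is affine over $S$. On a normal proper scheme, an effective divisor with affine complement need not be ample in general, so the argument really leans on the cited structural result \cite[Section~6.6, Theorem~2 (d)]{BLR} about schemes produced from birational group laws, where the relative ampleness of the boundary is built into the construction; alternatively one can check ampleness fiberwise using \cite[Corollaire~9.6.4]{EGAIV3} together with the known projectivity and the known ample boundary on the classical wonderful compactification (\Cref{groupembedtocompac}), which over each fiber reduces to \cite[Theorem~6.1.8]{BrionKumar}.
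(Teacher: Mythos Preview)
Your proposal is correct and follows essentially the same route as the paper: reduce to $S=\Spec(\mathbb{Z})$, invoke \cite[Section~6.6, Theorem~2 (d)]{BLR} to get quasi-projectivity from the boundary divisor $\mathcal{X}\setminus\overline{\Omega}$, and then combine with \Cref{properness}. The paper's proof is terser, simply noting that the normality of $\mathbb{Z}$ is the hypothesis needed for that BLR result, without spelling out the Cartier-divisor interpretation or the alternative fiberwise check you sketch at the end.
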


\begin{proof}   
     We can utilize
    \cite[Section~6.6, Theorem~2 (d)]{BLR} to get the quasi-projectivity of $\mathcal{X}$ over $S$. For completeness, we include a detailed proof here. More precisely, first note that the scheme $\mathcal{X}$ is the base change of a scheme $\mathcal{X}_0$ over $\Spec(\mathbb{Z})$ obtained by applying \Cref{defintionofcompactification} to the Chevalley group that shares the same root datum with $G$. Thus it suffices to show that $\mathcal{X}$ is quasi-projective over $S$ when $S=\Spec(\mathbb{Z})$. In this case, by \cite[corollaire~21.12.7]{EGAIV4}, every irreducible component of $\mathcal{X}\backslash \overline{\Omega}$ is of codimension one. Then by the Ramanujam--Samuel theorem \cite[théorème~21.14.3]{EGAIV4}, any effective Weil divisor $D$ whose support is $\mathcal{X}\backslash \overline{\Omega}$ is an effective Cartier divisor. Now we only need to show that such a divisor $D$ is $S$-ample. To see this, first, 
    since $S$ is now an affine scheme, by \cite[chapitre~II, corollaire~4.6.6]{EGA2}, it suffices to show that the invertible sheaf $\mathcal{L}\coloneq\mathcal{L}(D)$ is ample. Furthermore, by \cite[corollaire~2.7.2]{EGAIV2}, we can work étale locally over $S$, and by a limit argument, we are reduced to the case where $S$ is a strictly henselian discrete valuation ring. 
    
    By \cite[proposition~4.5.6(i)]{EGA2}, we have the flexibility to replace $\mathcal{L}$ by some power of itself. Hence, by \cite[\S~6.3, Theorem~1]{BLR}, we can assume that $\mathcal{L}$ satisfies the theorem of the square, cf., \cite[\S~6.3]{BLR}. Hence, $gD-D$ is linearly equivalent to $-(g^{-1}D-D)$ for any $g\in (G\times G)(S)$. This means that there exists a section $l\in \Gamma(\mathcal{X}, \mathcal{L}^{2})$ such that 
$$\mathcal{X}_l=\mathcal{X}\backslash\Supp(gD+g^{-1}D)=g\overline{\Omega}\bigcap g^{-1}\overline{\Omega},$$
where the second equality follows from our assumption on the support of the divisor $D$. Since, by \Cref{schematicnature}, $\mathcal{X}$ is separated over $S$, the intersection $g\overline{\Omega}\bigcap g^{-1}\overline{\Omega}$ is affine over $S$, see, for instance, \cite[01KP]{stacks-project}. 

Now to show the ampleness of $\mathcal{L}$, it suffices to show that every point $x\in\mathcal{X}$ is contained in an affine open subscheme of the form $g\overline{\Omega}\bigcap g^{-1}\overline{\Omega}$ for some $g\in (G\times_S G)(S)$. To do this, suppose that $x$ lies over a point $s\in S$, let $k'$ (resp., $k$) be the residue field of $x$ (resp., $s$), and we make the following claim:

\begin{claim}\label{claim1}
    There exists an open schematically dense subscheme $Z_s\subset (G\times_S G)_s$ such that, for any $g\in Z_s(k)$, we have that $x\in g\overline{\Omega}_s\bigcap g^{-1}\overline{\Omega}_s.$
\end{claim}

\begin{proof}[Proof of \Cref{claim1}]
To show the claim, we can assume that $x$ is a closed point. Then, by Hilbert's Nullstellensatz (cf. \cite[Theorem~1.7]{GortzWedhorn}), the field extension $k\subset k'$ is finite. Let $W$ be the pullback of $\overline{\Omega}_s\times_k k'$ along the $k'$-morphism
\begin{equation*}
    \begin{split}
        (G\times_S G)_s\times_k k'&\longrightarrow \mathcal{X}_s\times_k k'\\
        g&\longmapsto g\cdot x,
    \end{split}
\end{equation*}
and let $W^{-1}$ be the inverse of $W$ with respect to the group law of $(G\times_S G)_s\times_k k'$. Then the condition that $g\in (W\bigcap W^{-1})(k')$ gives rise to $x\in g(\overline{\Omega}_s\times_k k')\bigcap g^{-1}(\overline{\Omega}_s\times_k k')$. Note that, since $\mathcal{X}=(G\times_S G)\cdot \overline{\Omega}$ and $G$ has connected geometric fibers, $W\bigcap W^{-1}$ is open and schematically dense
in $(G\times_S G)_s\times_k k'$. Therefore, by the finiteness of $k'$ over $k$ and \cite[01JR, 01UA]{stacks-project}, there is an open schematically dense subscheme of $W\bigcap W^{-1}$ which descends to an open schematically dense subscheme $Z_s\subset (G\times_S G)_s$, as desired.
\end{proof}

\noindent Now, if $s$ is the closed point of $S$, by \cite[\S~2.3, Proposition~5]{BLR}, there exists a section $g\in (G\times_S G)(S)$ such that the base change of $g$ lies in $Z_s(k)$. If $s$ is the generic point, then the schematic closure of $(G\times_S G)_s\backslash Z_s$ in $G\times_S G$ is flat over $S$, and hence is nowhere dense in $G\times_S G$. Hence, by applying \emph{loc. cit.} to the complement of the schematic closure in $G\times_S G$, we can also find a section $g\in (G\times_S G)(S)$ such that the base change of $g$ lies in $Z_s(k)$ in this case. This finishes the proof of the ampleness of $\mathcal{L}$.

    Finally, by \cite[Lemma~0BCL~(1)]{stacks-project}, the corollary follows from \Cref{properness}.
\end{proof}

\begin{remark}
    Our method of constructing the projective scheme $\mathcal{X}$ containing $G$ as an open subscheme fails for non adjoint reductive groups. A key feature of our approach is that for the adjoint group $G$, the set of simple roots forms a basis of the character space $X_{\mathbb{R}}$ and cuts out a rational cone $C_0$ (the negative Weyl chamber) in the cocharacter space $X^{\vee}_{\mathbb{R}}$ such that the translates of $C_0$ under the Weyl group form a complete fan in $X^{\vee}_{\mathbb{R}}$. This feature is used to get the properness of $\mathcal{X}$, see the proof of \Cref{properness} and \Cref{remarkproperness}. In the non adjoint case, there is no such a basis in the character space. In fact, since the action of the Weyl group on the cocharacter space is generated by simple reflections, the cone $C_0$ cut out by such a basis must lie in some Weyl chamber. Since the fan of the translates of $C_0$ is complete, the cone $C_0$ must be a Weyl chamber. Hence this basis is the image of the set of simple roots by a Weyl group element. This implies the root datum of $G$ is adjoint.\footnote{We thank Will Sawin for pointing this out to us.}
\end{remark}

\begin{remark}
    A wonderful compactification (which is not necessarily smooth) $\overline{G}$ for a general split reductive group $G$ over a field has been proposed in \cite[2.4]{secondadjointness}, and has been applied to give a geometric proof of the second adjointness of Bernstein for p-adic groups, see loc. cit.
\end{remark}

\begin{remark}   
    It is also possible to replace $G\times\overline{\Omega}\times G$ by $G\times\overline{T}\times G$ in the construction of $\mathcal{X}$, which is similar to \cite{gabbercompactreport} where Gabber sketched the construction of a proper compactification for pseudo-reductive groups of minimal type over fields. Another possible model for $\mathcal{X}$, which is similar to \cite[Chapter~5]{BLR}, is to realize it as the subsheaf of $\mathcal{R}(\overline{\Omega})$ which is the sheaf of rational endomorphisms of $\overline{\Omega}$. 
\end{remark}

\section{Divisors}\label{sectiondivisors}
Divisors of a scheme often reflect many geometric properties of the scheme itself, for instance, in Luna's program for the classification of wonderful varieties (in characteristic zero), a kind of divisors which are called ``colors'' plays an important role, see \cite{Lunavarietespherique} for details. In this section, we will study the divisors of the scheme $\mathcal{X}$ constructed in \Cref{sectioncompactification} and their interactions with the $(G\times G)$-orbits of $\mathcal{X}$. In this section, we shall keep notations of \Cref{sectioncompactification}. 

We denote by $\underline{G}$ the unique Chevalley group over $\Spec(\mathbb{Z})$ which has the same root datum as $G$, and denote by $\underline{\mathcal{X}}$ the scheme obtained by applying \Cref{defintionofcompactification} to $\underline{G}$.
The following result has been established for classical wonderful compactifications, see, for instance, \cite[Chapter~6, Lemma~6.1.9]{BrionKumar}, whose argument can be carried over to our situation with mild modifications, except that it is more natural to consider relative Picard schemes instead of absolute Picard groups.

We let $G_{\text{sc}}\rightarrow G$ be the simply connected cover of $G$, and let $B_{\text{sc}}\subset G_{\text{sc}}$ (resp., $T_{\text{sc}}\subset G_{\text{sc}}$) be the preimage of $B$ (resp., $T$). Let $\{\omega_i\}\subset X^*(T_{\text{sc}})$ be the set of fundamental weights defined by $B_{\text{sc}}$ so that $\langle w_i, \alpha_j^\vee\rangle =\delta_i^j$. Let $w_0\in W$ be the longest element in the Weyl group.

\begin{theorem}\label{boundarydivisor}
    The complement $\mathcal{X}\backslash\overline{\Omega}$ is the union of the relative effective Cartier divisors $\overline{B\dot{s_i}B^-}$ (See \cite[062T]{stacks-project}), where $\dot{s_i}\in\Norm_G(T)(S)$ is a lifting of $s_i\in W(S)$ which is the simple reflection defined by the simple root $\alpha_i\in \Delta$ (\cite[exposé~XXII, corollaire~3.8]{SGA3III}) and bar indicates schematic closure in $\mathcal{X}$. Moreover, the relative Picard functor $P_{\mathcal{X}/S}$, which is the fppf sheaf associated to the functor
    $$ (\text{Sch}/S)^{\text{op}}\longrightarrow \text{Grp},\;\;T\longmapsto \Pic(\mathcal{X}\times_S T)$$
    from the category of $S$-schemes to the category of groups, 
    is represented by the constant group scheme $\underline{\mathbb{Z}^{\oplus l}}_S$ with basis the classes of the 
$\overline{B \dot{s_i} B^-}$.
\end{theorem}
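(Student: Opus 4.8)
\emph{Reduction and the boundary divisors.} The plan is first to settle the divisorial description of $\mathcal X\setminus\overline\Omega$, and then to use the resulting line bundles to pin down the relative Picard scheme. Since the formations of $\mathcal X$, of its open subscheme $\overline\Omega$, of the Bruhat cells $B\dot s_iB^-\subseteq G$, of schematic closures, of relative effective Cartier divisors, and of the relative Picard functor all commute with flat base change, and since $\mathcal X$ is the base change to $S$ of the analogous scheme $\mathcal X_0$ attached to the Chevalley group $G_0$ over $\Spec\mathbb Z$ (cf.\ the proof of \Cref{projectivity}), I take $S=\Spec\mathbb Z$. For each $i$ the cell $Y_i:=B\dot s_iB^-\subseteq G_0$ is locally closed and smooth over $\mathbb Z$ with geometrically irreducible fibres of dimension $\dim G_0-1$ (structure theory of reductive group schemes, \cite{SGA3III}); let $D_i\subseteq\mathcal X_0$ be its schematic closure. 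As $Y_i$ is $\mathbb Z$-flat, $D_i$ is $\mathbb Z$-flat and its formation commutes with passage to a geometric fibre $\mathbf X=\mathcal X_{0,\bar x}$ over an algebraically closed field $k$, where it yields the closure of $(Y_i)_k$ in $\mathbf X$. In $\mathbf X$ the big cell $\overline\Omega_k\cong\mathbb A^{\dim G_0}$ is affine and open in the smooth projective variety $\mathbf X$, so $\mathbf X\setminus\overline\Omega_k$ is pure of codimension one; none of its irreducible components can lie in $\mathbf X\setminus G_k=\bigcup_jX_j$, since each $(G\times G)$-stable boundary divisor $X_j$ meets $\overline\Omega_k$ along $\{\mathbb X_j=0\}$ (see \Cref{reviewofwonderful} and \cite{BrionKumar}); hence each component meets $G_k$ and is a codimension-one component of $G_k\setminus(\Omega_{G_0})_k$, and by the Bruhat decomposition of $G_k$ relative to $(B,B^-)$ these are precisely the $(D_i)_k$, each Cartier since $\mathbf X$ is smooth. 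Thus $\mathbf X\setminus\overline\Omega_k=\bigcup_i(D_i)_k$, and by the fibral criterion for relative effective Cartier divisors ($\mathbb Z$-flatness together with Cartier fibres, \cite[062Y]{stacks-project}) each $D_i$ is a relative effective Cartier divisor. Since $Y_i\subseteq G_0\setminus\Omega_{G_0}=G_0\cap(\mathcal X_0\setminus\overline\Omega)$ (using $G_0\cap\overline\Omega=\Omega_{G_0}$ from \Cref{groupembedtocompac}), we get $\bigcup_iD_i\subseteq\mathcal X_0\setminus\overline\Omega$; both sides are closed, reduced and $\mathbb Z$-flat with the same geometric fibres over the Dedekind base $\Spec\mathbb Z$, hence they coincide.

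\emph{The relative Picard scheme.} Because the structure morphism $f\colon\mathcal X_0\to\Spec\mathbb Z$ is projective (\Cref{projectivity}), flat, of finite presentation, with geometrically integral fibres and $\mathcal O_{\Spec\mathbb Z}\xrightarrow{\sim}f_*\mathcal O_{\mathcal X_0}$, the functor $P_{\mathcal X_0/\mathbb Z}$ is represented by a separated scheme $\mathbf{Pic}_{\mathcal X_0/\mathbb Z}$ locally of finite type over $\mathbb Z$ (representability of the Picard scheme for projective flat families with geometrically integral fibres, see \cite[\S~8.2]{BLR}). Wonderful compactifications satisfy $H^i(\mathbf X,\mathcal O_{\mathbf X})=0$ for $i>0$: vanishing of $H^2$ in every fibre makes $\mathbf{Pic}_{\mathcal X_0/\mathbb Z}$ formally smooth, hence smooth, over $\mathbb Z$, and vanishing of $H^1$ forces its relative dimension to be zero, so $\mathbf{Pic}_{\mathcal X_0/\mathbb Z}$ is étale over $\mathbb Z$; each geometric fibre is the constant group scheme $\underline{\mathbb Z^{\oplus l}}$, since $\Pic^0(\mathbf X)=0$ and the classes $[(D_i)_k]$ generate $\Pic(\mathbf X)$ by excision (the big cell has trivial Picard group) while $\Pic(\mathbf X)\cong\mathbb Z^{\oplus l}$ by \cite[Lemma~6.19]{BrionKumar}, so the $[(D_i)_k]$ form a basis. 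The relative Cartier divisors $D_1,\dots,D_l$ now define a homomorphism of $\mathbb Z$-group schemes
$$u\colon\underline{\mathbb Z^{\oplus l}}_{\mathbb Z}\longrightarrow\mathbf{Pic}_{\mathcal X_0/\mathbb Z},\qquad e_i\longmapsto[\mathcal L_{\mathcal X_0}(D_i)],$$
which is a morphism between schemes étale over $\mathbb Z$, hence flat, and is an isomorphism on every geometric fibre by the basis statement above; by the fibral criterion for isomorphisms \cite[Corollaire~17.9.5]{EGAIV4}, $u$ is an isomorphism. Base-changing back from $\Spec\mathbb Z$ to $S$ gives $P_{\mathcal X/S}\cong\underline{\mathbb Z^{\oplus l}}_S$, as desired.

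\emph{Main difficulty.} The crux is the first part: showing that the schematic closure $D_i$ over $\Spec\mathbb Z$ is $\mathbb Z$-flat with its formation compatible with geometric fibres, and matching those fibres with the classical codimension-one Bruhat-type divisors of the wonderful compactification, so that the smoothness of $\mathcal X$ and the fibral criterion for relative Cartier divisors can be applied. Granting this, the Picard computation is essentially formal, its only substantive inputs being the cohomological vanishing $H^{>0}(\mathbf X,\mathcal O_{\mathbf X})=0$ and the freeness of rank $l$ of $\Pic(\mathbf X)$ for the classical wonderful compactification, which are imported from \cite{BrionKumar}.
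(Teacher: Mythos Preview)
Your overall architecture matches the paper's: reduce to $S=\Spec\mathbb Z$, identify $\mathcal X_0\setminus\overline\Omega$ with the union of the closures $D_i=\overline{B\dot s_iB^-}$, show the $D_i$ are relative effective Cartier, then pin down $\mathbf{Pic}_{\mathcal X_0/\mathbb Z}$ via the map $u$ sending $e_i\mapsto[D_i]$ and the fibral isomorphism criterion. Your use of $H^2(\mathbf X,\mathcal O_{\mathbf X})=0$ to get smoothness of $\mathbf{Pic}$ over $\mathbb Z$ is a clean variant of the paper's $H^1$-argument on fibres; both are fine.

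There is, however, a genuine gap at the step you flag yourself as the crux. You assert that ``as $Y_i$ is $\mathbb Z$-flat, $D_i$ is $\mathbb Z$-flat and its formation commutes with passage to a geometric fibre''. The flatness is fine (integral and dominant over a Dedekind base), and in fact this already makes $D_i$ a relative effective Cartier divisor, since $\mathcal X_0$ is regular and $D_i$ is integral of codimension one; you do not need the fibral criterion here. But schematic closure does \emph{not} commute with non-flat base change, and $\mathbb Z\to\overline{\mathbb F_p}$ is not flat, so the identification $(D_i)_k=\overline{(Y_i)_k}$ requires proof. A priori $(D_i)_k$ is an effective Cartier divisor containing $\overline{(Y_i)_k}$, but it could pick up another component $\overline{(Y_j)_k}$ with $j\neq i$, or acquire multiplicity. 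This matters for your Picard argument: the excision sequence tells you the \emph{prime} components $[\overline{(Y_j)_k}]$ generate $\Pic(\mathbf X)\cong\mathbb Z^l$, and you need the matrix expressing the $[(D_i)_k]$ in this basis to lie in $\GL_l(\mathbb Z)$ to conclude that $u_k$ is an isomorphism. (The same issue affects your initial reduction from $S$ to $\Spec\mathbb Z$, since $S\to\Spec\mathbb Z$ need not be flat.)

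The paper closes this gap in two steps. First, a Bruhat-order argument: over each geometric fibre one checks that $\overline{B\dot s_iB^-}\cap G_k$ already lies in the closed set $G_k\setminus\bigcup_{ww_0\nleq s_iw_0}Bw B^-$, which excludes $\dot s_j$ for $j\neq i$; hence $(D_i)_k$ cannot contain any $\overline{(Y_j)_k}$ with $j\neq i$, so topologically $(D_i)_k=\overline{(Y_i)_k}$. Second, geometric reducedness: pulling back to $G_{\mathrm{sc}}$ (whose Picard group is trivial), the canonical section cutting out $\overline{B\dot s_iw_0B}$ has $(\widetilde B\times\widetilde B)$-weight $(\omega_i,-w_0\omega_i)$, and the indivisibility of the fundamental weight $\omega_i$ in $X^*(\widetilde T)$ forbids $(D_i)_k$ from being a multiple of $\overline{(Y_i)_k}$. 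With $(D_i)_k=\overline{(Y_i)_k}$ as Cartier divisors, your fibral isomorphism argument for $u$ then goes through verbatim.
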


Before coming to the proof of \Cref{boundarydivisor}, as a preparation, we need the following lemmas.

\begin{lemma}
    Assume that $S=\Spec(\mathbb{Z})$. The schematic closure $\overline{B\cdot \dot{s_i} \cdot B^-}\subset \mathcal{X}$ is an effective Cartier divisor.
\end{lemma}

\begin{proof}
   As $\mathcal{X}$ is separated by \Cref{schematicnature}, the affineness of $\overline{\Omega}$ ensures that the open embedding $\mathbf{j}\colon\overline{\Omega}\hookrightarrow\mathcal{X}$ in \Cref{representedopenimmer} is affine. By \cite[corollaire~21.12.7]{EGAIV4}, the complement $\mathcal{X}\backslash \overline{\Omega}$ is of pure codimension 1.
   We claim that $G\backslash \overline{\Omega}$ is schematically dense in $\mathcal{X}\backslash \overline{\Omega}$, where $G\backslash \overline{\Omega}\subset G$ and $\mathcal{X}\backslash \overline{\Omega} \subset \mathcal{X}$ are endowed with the reduced closed subscheme structures. Since $\mathcal{X}\backslash \overline{\Omega}$ is reduced, by \cite[Remark~9.20 (1)]{GortzWedhorn}, it suffices to show that $G\backslash \overline{\Omega}$ is topologically dense in $\mathcal{X}\backslash \overline{\Omega}$. Hence, by base change to geometric fibers and \Cref{groupembedtocompac}, the claim follows from the same result for classical wonderful compactifications \cite[Chapter~6, proof of Lemma~6.1.9]{BrionKumar}. 
   
   Furthermore, by the Bruhat decomposition \cite[exposé~XXII, théorème~5.7.4~(i)]{SGA3III}, combined with the fact that $G\bigcap \overline{\Omega}=\Omega_G$ (\Cref{intersectionofbigcell}), we have
   $$G\backslash\overline{\Omega}=\coprod_{w\in W(S)\backslash 1} B\cdot \dot{w} \cdot B^-=\bigcup_{s_i}\overline{B\cdot \dot{s_i} \cdot B^-}$$
   holds for the underlying topological space, where $\dot{w}\in \Norm_G(T)(S)$ is a lifting of $w$, the bar indicates schematic closure in $G\backslash \overline{\Omega}$, and the second equality follows from the property of the Bruhat order \cite[8.5.4, Proposition~8.5.5]{linearalgrpSpringer}. Therefore, after taking schematic closure in $\mathcal{X}$, we obtain that the equality $$\mathcal{X}\backslash\overline{\Omega}=\bigcup_{s_i }\overline{B\cdot \dot{s_i} \cdot B^-}$$
   holds for the underlying topological spaces, and $\overline{B\cdot \dot{s_i} \cdot B^-}$ is reduced by \cite[Remark~10.32]{GortzWedhorn}.

   As now $S$ is a Dedekind scheme, by \cite[Chapter~III, Proposition~9.7]{HartshorneGTM52}, these boundary divisors $\overline{B\cdot \dot{s_i} \cdot B^-}$ are flat over $S$ because $\overline{B\cdot \dot{s_i} \cdot B^-}$ dominates $S$. As $\mathcal{X}$ is locally factorial ($S=\Spec(\mathbb{Z})$), by \cite[Chapter~II, Proposition~6.11]{HartshorneGTM52}, $\overline{B\cdot \dot{s_i} \cdot B^-}$ is an effective Cartier divisor, hence is a relative effective Cartier divisor. 
\end{proof}

\begin{lemma}\label{fiberbasechangelemma}
    Assume that $S=\Spec(\mathbb{Z})$. The equality $ (\overline{B\cdot \dot{s_i} \cdot B^-})_{\overline{k(s)}}=\overline{B_{\overline{k(s)}}\cdot \dot{s_i} \cdot B_{\overline{k(s)}}^-}$ holds as an equality of Cartier divisors, where the latter bar indicates taking schematic closure in $\mathcal{X}_{\overline{k(s)}}$.
\end{lemma}

\begin{proof}

For a point $s\in S$, the geometric fiber $(\overline{B\cdot \dot{s_i} \cdot B^-})_{\overline{k(s)}}$ certainly contains $\overline{B_{\overline{k(s)}}\cdot \dot{s_i} \cdot B_{\overline{k(s)}}^-}$, where the closure is taken in the wonderful compactification of $G_{\overline{k(s)}}$. By \cite[056Q]{stacks-project}, $(\overline{B\cdot \dot{s_i} \cdot B^-})_{\overline{k(s)}}$ is again an effective Cartier divisor.

\begin{claim}\label{claimdivisor}
    The geometric fiber $(\overline{B\cdot \dot{s_i} \cdot B^-})_{\overline{k(s)}}$ cannot contain any other boundary divisor $\overline{B_{\overline{k(s)}}\cdot \dot{s_j} \cdot B_{\overline{k(s)}}^-}$ for $j\neq i$.
\end{claim}

\begin{proof}[Proof of \Cref{claimdivisor}]
    Since the formation of $B\cdot \dot{s_i} \cdot B^{-}$ commutes with passage to geometric fibers \cite[lemme~5.7.3 (i)]{SGA3II}, by the usual Bruhat decomposition \cite[8.5.4]{linearalgrpSpringer}, we have that $B\cdot \dot{s_i} \cdot B^-$ is contained in $ G\backslash \bigcup_{w\in W, ww_0\nleq s_iw_0} B\cdot w\cdot B^-,$ which is a closed subscheme of $G$. By taking closure in $G$ and passing to the geometric fiber, we have 
   $$(\overline{B \cdot \dot{s_i} \cdot B^-})_{\overline{k(s)}}\subset G_{\overline{k(s)}}\backslash \bigcup_{w\in W, ww_0\nleq s_iw_0} (B\cdot w\cdot B^-)_{\overline{k(s)}}=\overline{B_{\overline{k(s)}}\cdot \dot{s_i} \cdot B_{\overline{k(s)}}^-}\subset G_{\overline{k(s)}},$$
   where we use the functorial description of Bruhat cells, see \cite[exposé~XXII, lemme~5.7.3]{SGA3III}
   Now if the claim does not hold, then $\dot{s_j}\in (\overline{B \cdot \dot{s_i} \cdot B^-})_{\overline{k(s)}}$ for some $j\neq i$, and by intersecting with $G_{\overline{k(s)}}$, we have that $\dot{s_j}$ lies in the closure of $B_{\overline{k(s)}}\cdot \dot{s_i} \cdot B_{\overline{k(s)}}^-$ in $G_{\overline{k(s)}}$, which is a contradiction with \cite[8.5.4]{linearalgrpSpringer}.
\end{proof}
Therefore, since we already know that $\overline{B\cdot \dot{s_i} \cdot B^-}$ is a relative Cartier divisor, by \Cref{claimdivisor} and the description of the irreducible components of the complement of the big cell in the classical wonderful compactification  of $G_{\overline{k(s)}}$ (cf. \cite[Lemma~6.1.9]{BrionKumar}), we have that the equality 
\begin{equation}\label{basechangeofboundarydivisor}
       (\overline{B\cdot \dot{s_i} \cdot B^-})_{\overline{k(s)}}=\overline{B_{\overline{k(s)}}\cdot \dot{s_i} \cdot B_{\overline{k(s)}}^-}
   \end{equation}
 holds for the underlying topological spaces.
Now to show that \Cref{basechangeofboundarydivisor} is an equality of Cartier divisors, it suffices to check that $\overline{B\cdot \dot{s_i} \cdot B^-}$ is geometrically reduced, or equivalently that $\overline{B\cdot \dot{s_i}w_0 \cdot B}$ is so. 
Since $T_{\text{sc}}$ is a split tours, by the flat base change to geometric generic fiber and \cite[Proposition~6.1.11 (ii) (iv)]{BrionKumar}, the canonical section $\tau_i\in \Gamma(\mathcal{X},\mathcal{O}_{\mathcal{X}}(\overline{B\cdot \dot{s_i}w_0 \cdot B}))$ is of weight $(\omega_i, -w_0\omega_i)$ under the $(B_{\text{sc}}\times B_{\text{sc}})$-action. If $(\overline{B\cdot \dot{s_i}w_0 \cdot B})_{\overline{k(s)}}$ is non-reduced for some point $s\in S$, we would get a contradiction with the fact that the fundamental weight $\omega_i$ is indivisible in the lattice $X^*(T_{\text{sc}})$.
\end{proof}

\begin{proof}[Proof of \Cref{boundarydivisor}]
   We first assume that $S=\Spec(\mathbb{Z})$.
   Take a Weil divisor $D$ in $\mathcal{X}$. As $\overline{\Omega}$ is an affine space over $\Spec(\mathbb{Z})$, by \cite[Proposition~11.40]{GortzWedhorn}, $D$ is linearly equivalent to a divisor spanned by these boundary divisors. If $D$ is a linear combination of boundary divisors and is equivalent to a principal divisor associated to a rational function $h$ in the function field $K(\mathcal{X})$, after restricting to $\overline{\Omega}$ which is an affine space over $\Spec(\mathbb{Z})$, we have $h\in\mathbb{Z}^{\times}$, hence $D$ itself is zero. Therefore $\text{Pic}(\mathcal{X})$ is freely generated by these boundary divisors.

   Since, by \Cref{projectivity}, $\mathcal{X}$ is smooth and projective over $\Spec(\mathbb{Z})$ and by \Cref{groupembedtocompac} the geometric fibers of $\mathcal{X}/\Spec(\mathbb{Z})$ are reduced and irreducible, according to the result \cite[\S~8.2 Theorem~1]{BLR} of Grothendieck, the relative Picard functor $P_{\mathcal{X}/\Spec(\mathbb{Z})}$ is representable by a seperated $S$-scheme, say $\Pic_{\mathcal{X}/S}$, which is locally of finite presentation over $S$.
   By \Cref{groupembedtocompac}, the generic fiber of $\mathcal{X}$ over $S$ is geometrically integral, hence, by \cite[Proposition~5.51]{GortzWedhorn}, $\mathbb{Q}$ is algebraically closed in the function field $K(\mathcal{X})$. Then by \cite[corollaire~4.3.12]{EGA3I}, the pushforward of the structure sheaf $\mathcal{O}_{\mathcal{X}}$ to $S$ equals to $\mathcal{O}_S$. By \cite[\S~8.1, Proposition~4]{BLR}, $\Pic_{\mathcal{X}/S}(S)$ equals the Picard group $\Pic(\mathcal{X})$, which is, as we have seen, the free abelian group $\mathbb{Z}^{\oplus l}$. Hence we can define a morphism $$\theta\colon\underline{\mathbb{Z}^{\oplus l}}_S\longrightarrow\Pic_{\mathcal{X}/S}$$
   by sending an $l$-tuple of integers $(a_i)_{i=1,..., l}$ to the divisor $\Sigma_{i=1}^l a_i (\overline{B\cdot \dot{s_i} \cdot B^-})$. By applying \cite[Corollary~6.2.8]{BrionKumar} to the wonderful compactification $\mathcal{X}_{\overline{k(s)}}$ for a point $s\in S$, we have $H^1(\mathcal{X}_{\overline{k(s)}},\mathcal{O}_{\mathcal{X}_{\overline{k(s)}}})=0$. Combining this with \cite[\S~8.4 Theorem~1 (b)]{BLR}, it follows that  $(\Pic_{\mathcal{X}/S})_{\overline{k(s)}}$, which is isomorphic to $\Pic_{\mathcal{X}_{\overline{k(s)}}/\overline{k(s)}}$, has dimension zero and is smooth over $\overline{k(s)}$. By \cite[Lemma~6.1.9]{BrionKumar}, the underlying topological space of $(\Pic_{\mathcal{X}/S})_{\overline{k(s)}}$ is the free abelian group $\mathbb{Z}^{{\bigoplus}l}$ with basis the $\overline{B_{\overline{k(s)}}\cdot \dot{s_i} \cdot B_{\overline{k(s)}}^-}$. Hence, by \cite[Corollary~5.8]{GortzWedhorn} and \Cref{basechangeofboundarydivisor}, the restriction of $\theta$ to each geometric fiber is an isomorphism, and then by \cite[corollaire~17.9.5]{EGAIV4}, $\theta$ itself is an isomorphism (here we use the flatness of $\Pic_{\mathcal{X}/\Spec(\mathbb{Z})}$ which follows from \cite[III, Proposition~9.7]{HartshorneGTM52}).

   Now let us go back to the general base $S$. By \Cref{defintionofcompactification}, $\mathcal{X}$ is the base change of $\underline{\mathcal{X}}$ along the natural morphism from $S$ to $\Spec(\mathbb{Z})$. By \Cref{fiberbasechangelemma}, each boundary divisor $\overline{B\cdot \dot{s_i} \cdot B^-}$ descends to the boundary divisor of $\underline{\mathcal{X}}$ which is the schematic closure of the obvious $\mathbb{Z}$-model of $B\cdot \dot{s_i} \cdot B^-$ in $\underline{\mathcal{X}}$, hence by \cite[056Q]{stacks-project}, is again a relative effective Cartier divisor. The relative Picard functor $P_{\mathcal{X}/S}$ is represented by the base change of $\Pic_{\underline{\mathcal{X}}/\Spec(\mathbb{Z})}$ along the natural morphism from $S$ to $\Spec(\mathbb{Z})$, which is simply the constant group scheme $\underline{\mathbb{Z}^{\oplus l}}_S$.
\end{proof}

For $s\in S$, by \Cref{groupembedtocompac}, the geometric fiber $\mathcal{X}_{\overline{k(s)}}$ is the wonderful compactification of $G_{\overline{k(s)}}$, and $\overline{\Omega}_{\overline{k(s)}}$ is the big cell of $\mathcal{X}_{\overline{k(s)}}$. We denote as $h_i$ the element of $\overline{T}_{\overline{k(s)}}$ which has $i$-th coordinate $0$, and $1$ otherwise, here $i\in\{1,...,l\}$. By \cite[Theorem~6.1.8 (ii)]{BrionKumar}, the complement $\mathcal{X}_{\overline{k(s)}}\backslash G_{\overline{k(s)}}$ is the union of prime divisors $\mathbf{X}_1,...,\mathbf{X}_l$ with normal crossings, where $\mathbf{X}_i$ is the $(G_{\overline{k(s)}}\times_{\overline{k(s)}}G_{\overline{k(s)}})$-orbit of the point $h_i$. A relative version of this result is the following.

\begin{proposition}\label{boundarydivisors}
    The complement $\mathcal{X}\backslash G$ is covered by $(G\times G)$-invariant smooth relative effective Cartier divisors $S_{\alpha_i}$ with $S$-relative normal crossings\footnote{Following the terminology of \cite[exposé~XIII, 2.1]{SGA1}, for a scheme $X$ over a scheme $S$, we say that a relative Cartier divisor $D\subset X$ is strictly with $S$-relative normal crossings, if there exists a finite family $(f_i\in \Gamma(X,\mathcal{O}_X))_{i\in I}$ such that 
    \begin{itemize}
        \item $D=\bigcup_{i\in I}V_X(f_i)$;
        \item for every $x \in \Supp(D)$, $X$ is smooth at $x$ over $S$, and 
     the closed subscheme $V((f_i)_{i\in I(x)})\subset X$ is smooth over $S$ of codimension $ \vert I(x)\vert$, where $I(x)=\{i\in I\vert f_i(x)=0\}$. 
    \end{itemize}
    The divisor $D$ is with $S$-relative normal crossings, if étale locally on $X$ it is strictly with $S$-relative normal crossings. 
    }, 
    for $\alpha_i \in \Delta$, such that after passing to each geometric fiber, $S_{\alpha_i}$ becomes the boundary divisor $\mathbf{X}_i$. 
\end{proposition}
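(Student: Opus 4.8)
The strategy is to realize the $S_{\alpha_i}$ as schematic closures of the coordinate hyperplanes in the middle factor $\overline{T}$ of the big cell, and to propagate their properties across $\mathcal{X}$ by means of the $(G\times G)$-action. By the base-change compatibility of \Cref{defintionofcompactification} I first reduce to $S=\Spec(\mathbb{Z})$ and write $\mathcal{X}=\mathcal{X}_0$. For $\alpha_i\in\Delta$ I set $D_{\alpha_i}:=V_{\overline\Omega}(\mathbb{X}_i)\subset\overline\Omega$; since $\overline\Omega=U^-\times\overline T\times U^+$ is an affine space over $\mathbb{Z}$, $D_{\alpha_i}$ is again an affine space (in particular smooth, and a relative effective Cartier divisor of $\overline\Omega$), and the family $(D_{\alpha_i})_i$ is strictly with $S$-relative normal crossings in $\overline\Omega$, being coordinate hyperplanes. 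Because $G$ is adjoint, $\{\alpha_i\}$ is a basis of $X^*(T)$, so $\nu|_T$ of \Cref{Omegaembedformula} identifies $T$ with the locus of $\overline T=\prod_\Delta\mathbb{G}_a$ where all coordinates are units; together with $G\cap\overline\Omega=\Omega_G$ from \Cref{groupembedtocompac} this gives $\bigcup_i D_{\alpha_i}=\overline\Omega\setminus\Omega_G=(\mathcal{X}_0\setminus G_0)\cap\overline\Omega$. I then let $S_{\alpha_i}$ be the schematic closure of $D_{\alpha_i}$ in $\mathcal{X}_0$; it is integral, dominates $\Spec(\mathbb{Z})$, hence is flat over $\mathbb{Z}$, and $S_{\alpha_i}\cap\overline\Omega=D_{\alpha_i}$.

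The first substantive point is the fibral identification $(S_{\alpha_i})_{\overline{k(s)}}=\mathbf{X}_i$. Using that $D_{\alpha_i}$ and $S_{\alpha_i}$ are $\mathbb{Z}$-flat, so that forming the schematic closure commutes with passage to a geometric fiber and no spurious vertical component appears, $(S_{\alpha_i})_{\overline{k(s)}}$ is the closure in $\mathcal{X}_{0,\overline{k(s)}}$ of $V(\mathbb{X}_i)=\mathbf{X}_i\cap\overline\Omega_{\overline{k(s)}}$, which is $\mathbf{X}_i$ by the classical structure of the boundary of the wonderful compactification recalled just before the statement. Granting this, the fibral criterion for smoothness shows $S_{\alpha_i}$ is smooth over $\mathbb{Z}$ of relative codimension one; as a smooth codimension-one closed subscheme of the regular scheme $\mathcal{X}_0$ it is an effective Cartier divisor, hence, being $\mathbb{Z}$-flat, a smooth relative effective Cartier divisor with geometric fiber $\mathbf{X}_i$.

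Next I would show $S_{\alpha_i}$ is $(G_0\times G_0)$-invariant. Let $a\colon(G_0\times G_0)\times\mathcal{X}_0\to\mathcal{X}_0$ be the action. Since each $\mathbf{X}_i$ is $(G\times G)$-stable and $(S_{\alpha_i})_{\overline{k(s)}}=\mathbf{X}_i$, the underlying set of $(G_0\times G_0)\times S_{\alpha_i}$ is mapped by $a$ into that of $S_{\alpha_i}$; hence the closed immersion $\big((G_0\times G_0)\times S_{\alpha_i}\big)\cap a^{-1}(S_{\alpha_i})\hookrightarrow(G_0\times G_0)\times S_{\alpha_i}$ is surjective, and since its target is smooth over $\mathbb{Z}$, hence reduced, it is an isomorphism, so $a$ carries $(G_0\times G_0)\times S_{\alpha_i}$ into $S_{\alpha_i}$. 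With invariance in hand, $\mathcal{X}_0=(G_0\times G_0)\cdot\overline\Omega$ (from \Cref{schematicnature}) and $S_{\alpha_i}\cap\overline\Omega=D_{\alpha_i}$ give at once $\bigcup_i S_{\alpha_i}=\mathcal{X}_0\setminus G_0$, and, locally on the cover of $\mathcal{X}_0$ by the $(G\times G)$-translates of $\overline\Omega$, identify $\bigcup_i S_{\alpha_i}$ with the strictly relative normal crossings divisor $\bigcup_i D_{\alpha_i}$, so $\bigcup_i S_{\alpha_i}$ has $S$-relative normal crossings. For a general base $S$ I would take $S_{\alpha_i}$ to be the pullback of the above along $S\to\Spec(\mathbb{Z})$: being a smooth relative effective Cartier divisor is stable under base change (\cite[056Q]{stacks-project}), and so are relative normal crossings, the $(G\times G)$-invariance, the equality $\bigcup_i S_{\alpha_i}=\mathcal{X}\setminus G$, and the geometric-fiber identification with $\mathbf{X}_i$.

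The step I expect to be the main obstacle is the fibral identification $(S_{\alpha_i})_{\overline{k(s)}}=\mathbf{X}_i$ for $s$ of positive characteristic: one must rule out that the schematic closure of $D_{\alpha_i}$ acquires an extra component inside $\mathcal{X}_0\setminus\overline\Omega$ over that fiber. This is where the $\mathbb{Z}$-flatness of $S_{\alpha_i}$ must be combined with the fact that $\mathcal{X}_0\setminus\overline\Omega$ is of pure codimension one, established in the proof of \Cref{boundarydivisor}. Once this is settled, the smoothness, the Cartier property, the normal crossings property, and the $(G\times G)$-invariance all follow formally from the affine-space description of $\overline\Omega$ together with the fact that the $(G\times G)$-translates of $\overline\Omega$ cover $\mathcal{X}$.
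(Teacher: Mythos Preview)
Your approach is close to the paper's, and most of the pieces are right: reduction to $S=\Spec(\mathbb{Z})$, defining the divisors via the coordinate hyperplanes $D_{\alpha_i}=V_{\overline\Omega}(\mathbb X_i)$, checking normal crossings on the $(G\times G)$-translates of $\overline\Omega$, and pulling back to a general base. The difference is the order in which you run the argument, and that order is what produces the difficulty you flag at the end.

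The paper proceeds in the opposite direction. It takes an irreducible component $\mathbf C$ of the closed, $(G\times G)$-stable set $\mathcal X_0\setminus G_0$; connectedness of $G_0\times G_0$ forces $\mathbf C$ to be $(G\times G)$-stable \emph{before} anything fibral is known. Then $\mathbf C\cap\overline\Omega$ is a $(U^-T\times U^+)$-stable hypersurface in the polynomial ring, hence equals some $V(\mathbb X_i)$, and regularity of $\mathcal X_0$ (smooth over $\mathbb Z$) makes the integral codimension-one $\mathbf C$ automatically Cartier. Flatness over $\mathbb Z$ then upgrades this to a relative effective Cartier divisor, and the fibral identification with $\mathbf X_i$ follows from invariance: on each translate $g\overline\Omega_{\overline{k(s)}}$ one has $\mathbf C_{\overline{k(s)}}\cap g\overline\Omega=g\,V(\mathbb X_i)=\mathbf X_i\cap g\overline\Omega$.

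Your route instead tries to prove the fibral identification $(S_{\alpha_i})_{\overline{k(s)}}=\mathbf X_i$ first, and then deduce invariance from it. The assertion that ``flatness of $S_{\alpha_i}$ makes schematic closure commute with base change'' is not valid as stated, and your proposed fix (flatness together with $\mathcal X_0\setminus\overline\Omega$ being pure of codimension~$1$) does not by itself exclude an extra component or an embedded component of $(S_{\alpha_i})_{\overline{k(s)}}$ outside $\overline\Omega_{\overline{k(s)}}$. Two observations close the gap, and both are exactly what the paper uses: first, $D_{\alpha_i}\subset\overline\Omega\setminus\Omega_G\subset\mathcal X_0\setminus G_0$, so $S_{\alpha_i}\subset\mathcal X_0\setminus G_0$ and every irreducible component of $(S_{\alpha_i})_{\overline{k(s)}}$ is one of the $\mathbf X_j$, which all meet $\overline\Omega$ and are therefore ruled out for $j\neq i$; second, $S_{\alpha_i}$ is Cartier in $\mathcal X_0$ simply because $\mathcal X_0$ is regular, so $(S_{\alpha_i})_{\overline{k(s)}}$ is Cartier and, having multiplicity one along $\mathbf X_i$ (as seen on $\overline\Omega$), is reduced and equals $\mathbf X_i$. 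Equivalently and more cleanly, just note at the outset that your $S_{\alpha_i}$ is an irreducible component of $\mathcal X_0\setminus G_0$; then $(G\times G)$-invariance is immediate from connectedness, and the fibral identification, smoothness, Cartier property, and normal crossings all follow without the detour.
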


\begin{proof}
    We first prove the proposition when $S=\Spec(\mathbb{Z})$. By the same argument as in the proof of \Cref{boundarydivisor}, $\mathcal{X}\backslash G$ is of pure codimension 1. We view $\mathcal{X}\backslash G$ as a reduced closed subscheme of $\mathcal{X}$. Consider an irreducible component $\mathbf{C}$ of $\mathcal{X}\backslash G$. As $\mathcal{X}\backslash G$ is $(G\times G)$-stable and $G$ is connected, $\mathbf{C}$ is $(G\times G)$-stable. Since $\mathbf{C}=\bigcup_{g\in G\times G} g\cdot(\mathbf{C}\bigcap \overline{\Omega})$ (this follows from $\mathcal{X}=\bigcup_{g\in G\times G} g\cdot\overline{\Omega}$, cf. \Cref{defintionofcompactification}) and $\mathbf{C}$ is of codimension 1 in $\mathcal{X}$, $\mathbf{C}\bigcap \overline{\Omega}$ is of codimension 1 in the big cell $\overline{\Omega}$. Note that now the coordinate ring of $\overline{\Omega}$ is a polynomial ring over $\mathbb{Z}$ which is a UFD, hence by the Krull's Hauptidealsatz (see, for instance, \cite[Chapter~I, Theorem~1.11A]{HartshorneGTM52}), $\mathbf{C}\bigcap \overline{\Omega}$ is a hypersurface in $\overline{\Omega}$, i.e., defined by a single element of the coordinate ring of $\overline{\Omega}$. Notice that $\mathbf{C}\bigcap \overline{\Omega}$ is irreducible and $(U^-T\times U^+)$-stable, hence it is nothing else but the hypersurface defined by the coordinate $\mathbb{X}_{\alpha_i}$ of the big cell $\overline{\Omega}$ for a simple root $\alpha_i\in\Delta$. Since $\mathcal{X}$ is covered by the $(G\times G)$-translates of $\overline{\Omega}$ whose local rings now are all UFDs, by \cite[Chapter~II, Proposition~6.11]{HartshorneGTM52}, the prime Weil divisor $\mathbf{C}$ is locally principal. The assertion of the proposition about geometric fibers follows from the construction of the embedding $\mathbf{i}$ as in the proof of \Cref{groupembedtocompac}. Finally, since $\mathbf{C}$ is flat over $\Spec(\mathbb{Z})$ (\cite[Chapter~III, Proposition~9.7]{HartshorneGTM52}), by \cite[Lemma~062Y]{stacks-project}, the locally principal subscheme $\mathbf{C}$ in $\mathcal{X}$ is a relative effective Cartier divisor because so are all of its geometric fibers. The divisor $\mathcal{X}\backslash G$ is of $S$-relative normal crossings because $(\mathcal{X}_{S'}\backslash  G_{S'})\bigcap(g_1,g_2)\overline{\Omega}_{S'}$ is of strict $S$-relative normal crossings in $(g_1,g_2)\overline{\Omega}_{S'}$ for any étale $S$-scheme $S'$ and any $(g_1,g_2)\in (G\times G)(S')$.

    Back to the original base $S$, by the functorial description of the boundary divisors of $\underline{\mathcal{X}}$ in the previous case, we deduce that the boundary $\mathcal{X}\backslash G$ is covered by the base changes of the relative Cartier divisors in the boundary $\underline{\mathcal{X}}\backslash \underline{G}$ along the morphism $S\longrightarrow \Spec(\mathbb{Z})$, which are still relative effective Cartier divisors by \cite[Lemma~056Q]{stacks-project}. The $S$-relative normal crossings property of $\mathcal{X}\backslash G$ is inherited from that of $\underline{\mathcal{X}}\backslash \underline{G}$.
\end{proof}

\begin{remark}
    When the base $S$ is the spectrum of an algebraically closed field, $\mathcal{X}$ coincides with the classical wonderful compactification of $G$. In this case, any partial intersection of the boundary divisors equals to the closure of a $(G\times G)$-orbit as we recall in \Cref{introclassical}. A natural question is: does this property hold over more general base? We first clarify the notion of an orbit for a group scheme action which is
used in this remark. For an $S$-scheme $Y$ acted on by a group scheme $H$ over $S$ and an $S$-valued
point $x\in Y(S)$, we refer the subsheaf of $Y$ spanned by $x$ under the action of $H$ as the orbit of
$x$ (compare with \cite[Definition~0.4]{GIT}). Then the question to the above question in general is no. For instance, if $S$ is the spectrum of a DVR $R$ and $G=\PGL_2$, the $(G\times G)$-orbits in $\mathcal{X}$ are parametrized by the valuations of $\overline{T}(R)=\mathbb{A}_1(R)=R$ while in this case there is only one boundary divisor which is $\mathcal{X}\backslash G$. Also, from this example, we see that the set of the $(G\times G)$-orbits of $\mathcal{X}$ depends essentially on the geometry of the base scheme $S$.
\end{remark}

\section{Descent}\label{sectiondescent}
An adjoint reductive group is descended from a split adjoint reductive group. In this section, we deduce an equivariant compactification of a general adjoint reductive group scheme by twisting the equivariant compactification (defined in \Cref{sectioncompactification}) of a split form of this group. Note that a similar twisting procedure for classical wonderful compactifications appears in \cite[5.5]{twistingwonderfulcompacctification}.

\subsection{Descent datum of groups}
In this part, we review generalities of the desecent datuam for a reductive group scheme which will be important for us to construct descent datum of our compactification in split case.

\subsubsection{The setup}
We consider an adjoint reductive group scheme (not necessarily split) $G$ over a scheme $S$. By \cite[exposé~XXII, corollaire~2.3]{SGA3III} and working Zariski locally on the base $S$, without loss of generality, we assume that $G$ splits after base change to an étale cover $U$ of $S$, and, by \cite[exposé~XXV, théorème~1.1]{SGA3III}, there exists a split adjoint group $G_0$ over $S$ such that $G$ is a form of $G_0$. We fix a maximal split torus $T$ of $G_0$ as in \cite[exposé~XXII, définition~1.13]{SGA3III} with $\rk(T)=n$, in a Borel subgroup $B$ of $G_0$, and denote by 
$$\mathbf{R}\coloneq (X,\Psi, \Delta, X^{\vee}, \Psi^{\vee}, \Delta^{\vee})$$
the corresponding based root datum, where $X$ and $X^{\vee}$ are the character lattice and the cocharacter lattice of $T$. We also fix a pinning $\{X_{\alpha}\}_{\alpha\in\Delta}$ of $G_0$, where $X_{\alpha}\in \Gamma(S, \mathfrak{g}_\alpha)^{\times}$ and $\mathfrak{g}_\alpha$ is the weight subsheaf of the Lie algebra $\mathfrak{g}$ of $G_0$ with respect to the weight $\alpha$. By \Cref{groupembedtocompac} and \Cref{projectivity}, we obtain a projective smooth $(G_0\times_S G_0)$-scheme $\mathcal{X}_0$ that equivariantly contains $G_0$ as an open $S$-dense subscheme.

\subsubsection{Descent datum via \v{C}ech 1-cocycle}
Denote by $U'$ (resp., $U''$) the 2-fold (resp., 3-fold) product of $U$ over $S$. By \cite[Example~7.1.4]{redctiveconrad}, the group $G$ is descended from $G_0\times_S U$ via a \v{C}ech 1-cocycle $$\xi\in \text{Z}^1(U/ S,\Aut_S(G_0)).$$ More precisely, this $\xi$ is an automorphism of the group scheme $\widetilde{G}:=G_0\times_S U'$ over $U'$ such that $\pr_{13}^*(\xi)=\pr_{23}^*(\xi)\pr_{12}^*(\xi)$, where $\pr_{ij}:G_0\times_S U''\longrightarrow G_0\times_S U'$ be the base change along $G_0\longrightarrow S$ of the projection from $U''$ onto the product of factors $U$ with indices $i,j$.

\subsubsection{Decomposition of group automorphism}
By \cite[exposé~XXIV, théorème~1.3]{SGA3III}, we have the following short exact sequence of group schemes:
\begin{equation}\label{autogrpexactseq}
    1\longrightarrow \widetilde{G}\longrightarrow\Aut_{U'}(\widetilde{G})\longrightarrow\Out_{U'}(\widetilde{G})\longrightarrow 1.
\end{equation}
Further, the induced pinning $\{\widetilde{X}_a\}_{a\in\Delta}$ on $\widetilde{G}$ from the pinning $\{X_a\}_{a\in\Delta}$ splits the above short exact sequence and yields an isomorphism:
\begin{equation}\label{simdirectprodautogrp}
    \Aut_{U'}(\widetilde{G})\simeq \widetilde{G}\rtimes\Out_{U'}(\widetilde{G}).
\end{equation}
At the same time, since $\widetilde{G}$ is split, by \emph{loc.~cit.}, the outer automorphism group scheme $\Out_{U'}(\widetilde{G})$ is identified with the constant group scheme $\underline{E}_{U'}$ over $U'$, where $E$ is the group of automorphisms of the based root datum $(X,\Psi, \Delta, X^{\vee}, \Psi^{\vee}, \Delta^{\vee})$.

\subsubsection{Concrete Description of outer automorphism}
By \cite[exposé~XXIV, théorème~1.3 (iii)]{SGA3III}, every element $\eta\in \Out_{U'}(\widetilde{G})$ is induced by an automorphism 
 $$(\rho, ^t\rho)\in\Aut(\mathbf{R})$$
of the based root datum $\mathbf{R}$, where $\rho$ (resp., $^t\rho$) is an automorphism of the abelian group $X$ (resp., $X^{\vee}$) which induces a bijection on $\Psi$ (resp., $\Psi^{\vee}$) and preserves simple roots (resp., simple coroots) (see \cite[exposé~XXI, définition~6.1]{SGA3III}). Now $\eta$ is determined in the following way (see \cite[exposé ~XXIII, proof of théorème~4.1 and exposé~XXIII, 1.5]{SGA3III}):
\begin{itemize}[leftmargin=*]
    \item $\rho$ induces an automorphism $\rho_T$ of $T$:
$$\rho_T=\text{D}_{U'}(\rho)\colon T=\text{D}_{U'}(X)\longrightarrow \text{D}_{U'}(X)=T;$$
    \item For each simple root $\alpha\in\Delta$, set
    $$n_{\alpha}\coloneq p_{\alpha}(1)p_{-\alpha}(-1)p_{\alpha}(1),$$
    where $p_{\alpha}: \mathbb{G}_{a,U'}\rightarrow U_{\alpha}$ is given by the corresponding $X_{\alpha}$ in the pinning we fixed. Then $\rho_T$ extends to an automorphism $\rho_N$ of $\Norm_G(T)$ by taking
    $n_{\alpha}$ to $n_{\rho(\alpha)}$.
    \item For each simple root $\alpha\in\Delta$, set 
    \begin{flalign*}
    \rho_{\alpha}\colon U_{\alpha}&\longrightarrow U_{\rho(\alpha)},\\
    p_{\alpha}(x)&\longmapsto p_{\rho(\alpha)}(x).
    \end{flalign*}
    \item For each root $r\in\Psi$, there exists an $n\in\Norm_{\widetilde{G}}(T)(U')$ such that $\overline{n}(r)\in\Delta$, where $\overline{n}$ is the image of $n$ in the Weyl group $W$. Thus, by \cite[exposé~XXIII, lemme~2.3.5]{SGA3III}, we have a homomorphism 
    \begin{flalign*}
        \rho_r\colon U_r &\longrightarrow U_{\rho(r)}, 
    \end{flalign*}
    such that 
       $ \Ad_{\rho_N(n)}\rho_{r}(x)=\rho_{\overline{n}(r)}(\Ad_{n}x)$,
    for any $x\in U_r(V)$ and $V\rightarrow U'$.
\end{itemize}
Then, as in \cite[exposé~XXIII, proof of théorème~4.1]{SGA3III}, the above data glue together uniquely to yield the group automorphism $\eta$ of $\widetilde{G}$ over $U'$. More precisely, as in \emph{loc. cit.}, we can prove the existence of the group automorphism $\eta$ by induction on the semisimple rank of $\widetilde{G}$. If the semisimple rank of $\widetilde{G}$ is $0$, then we are in the toral case which is clear, see \cite[exposé~XXIII, 4.1.1]{SGA3III}. The $\SL_2$-crutch is used to deal with the case when the semisimple rank of $\widetilde{G}$ is $1$, see \cite[exposé~XXIII, 4.1.2]{SGA3III}. When the semisimple rank of $\widetilde{G}$ is $2$, the $\eta$ is constructed via a case-by-case argument \cite[exposé~XXIII, 4.1.4-4.1.8]{SGA3III} based on \cite[exposé~XXIII, corollaire~2.5]{SGA3III}. When the semisimple rank of $\widetilde{G}$ is strictly larger than $2$, we are reduced to the semisimple rank $2$ case by the existence criterion for group homomorphisms \cite[exposé~XXIII, corollaire~2.4]{SGA3III}, see \cite[exposé~XXIII, 4.1.9]{SGA3III} for this reduction step.

\subsection{Descent datum for compactifications}
Now we are in the position to define a descent datum on $\mathcal{X}_{0}\times_S U'$. Since the construction in \Cref{defintionofcompactification} commutes with any base change, then $\mathcal{X}_0 \times_S U'$ is identified to the compactification of $\widetilde{G}$, denoted by $\widetilde{\mathcal{X}}$.
By \Cref{simdirectprodautogrp}, we can uniquely decompose $\xi$ into the product of $\delta\in\widetilde{G}(U')$ and $\eta\in\Out_{U'}(\widetilde{G})(U')$.

 We are going to define two automorphisms of  $\widetilde{\mathcal{X}}$ which (necessarily uniquely) extend $\delta$ and $\eta$ respectively and then, take their composition as an extension of $\xi$ on $\widetilde{\mathcal{X}}$. 

For the element $\delta$, we define its action on $\widetilde{\mathcal{X}}$ to be the conjugation action, see \Cref{definitionofgrpaction}.

For the element $\eta$, inspired by the above explicit description of $\eta$, we define an automorphism $\rho_{\overline{T}}$ of $\overline{T}=\prod_{\Delta}(\mathbb{G}_{a,U'})$ to be the permutation of coordinates induced by $\rho$ so that the following diagram    
$$\xymatrix{
T \ar@{^{(}->}[r] \ar[d]^{\rho_T}& \overline{T}\ar[d]^{\rho_{\overline{T}}}\\
T \ar@{^{(}->}[r] & \overline{T}
}$$
commutes, where the two horizontal morphisms are the embedding of $T$ induced by negative simple roots as \Cref{Omegaembedformula}. As $\rho$ preserves $\Delta$, the $\eta$ defines an automorphism $\hat{H}$ of $\widetilde{G}\times_{U'}\overline{\Omega}_{U'}\times_{U'} \widetilde{G}$ by applying $\rho_{\overline{T}}$ and $\rho_r$, $r\in\Psi$ to each component, where $\overline{\Omega}_{U'}$ is the base change $\overline{\Omega}\times_{S}U'$.

\begin{lemma}\label{outmorphismpreserverelation}
    The automorphism $\hat{H}$ of   $\widetilde{G} \times_{U'}\overline{\Omega}_{U'}\times_{U'} \widetilde{G}$ preserves the relation defined in \Cref{relationonOmega}.
\end{lemma}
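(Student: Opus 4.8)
The plan is to reduce the lemma to an \emph{equivariance property} of the rational action $\pi$ of \Cref{theoremrationalaction} under the outer automorphism. Write $\hat{H}=\eta\times\hat{H}_{\overline{\Omega}}\times\eta$, where $\hat{H}_{\overline{\Omega}}$ denotes the automorphism of $\overline{\Omega}=U^-\times\overline{T}\times U^+$ acting by $\rho_r$ ($r\in\Psi$) on the two unipotent factors and by $\rho_{\overline{T}}$ on $\overline{T}$; thus $\hat{H}(g_1,x,g_2)=(\eta(g_1),\hat{H}_{\overline{\Omega}}(x),\eta(g_2))$. Working over the base $U'$, to which \Cref{theoremrationalaction} applies since $\widetilde{G}$ is split, the key claim is the identity of $U'$-rational morphisms
\[
\hat{H}_{\overline{\Omega}}\circ\pi\;=\;\pi\circ\hat{H}\colon\ \widetilde{G}\times\overline{\Omega}\times\widetilde{G}\dashrightarrow\overline{\Omega}.
\]

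Granting this identity, the lemma follows quickly. Since $\hat{H}_{\overline{\Omega}}$ and $\hat{H}$ are automorphisms defined everywhere, the identity forces $\hat{H}(\Dom(\pi))=\Dom(\pi)$, so $\hat{H}$ preserves well-definedness with respect to $\pi$. Now suppose $(g_1,x,g_2)\sim(g_1',x',g_2')$ over a test scheme, witnessed by an fppf cover $S''$ and a section $(a_1,a_2)$ with $\pi(a_1g_1,x,a_2g_2)=\pi(a_1g_1',x',a_2g_2')$ both well defined. Applying $\hat{H}_{\overline{\Omega}}$ to this equality and using the displayed identity together with the fact that $\eta$ is a group automorphism (so $\eta(a_ig_i)=\eta(a_i)\eta(g_i)$), we obtain $\pi(\eta(a_1)\eta(g_1),\hat{H}_{\overline{\Omega}}(x),\eta(a_2)\eta(g_2))=\pi(\eta(a_1)\eta(g_1'),\hat{H}_{\overline{\Omega}}(x'),\eta(a_2)\eta(g_2'))$, both well defined. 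As $\hat{H}(g_1,x,g_2)=(\eta(g_1),\hat{H}_{\overline{\Omega}}(x),\eta(g_2))$, this is precisely the statement that $\hat{H}(g_1,x,g_2)\sim\hat{H}(g_1',x',g_2')$, with the section $(\eta(a_1),\eta(a_2))$ as a witness. Since $\hat{H}^{-1}$ is of the same shape (it is the analogous automorphism attached to the outer automorphism $\eta^{-1}$), the same argument applied to $\hat{H}^{-1}$ yields the converse implication, so $\hat{H}$ preserves the relation of \Cref{relationonOmega}.

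It remains to prove the displayed identity. Both sides are $U'$-rational morphisms $\widetilde{G}\times\overline{\Omega}\times\widetilde{G}\dashrightarrow\overline{\Omega}$, so by the uniqueness in \Cref{theoremrationalaction} (equivalently, because two rational morphisms agreeing on a dense open coincide) it suffices to check that both restrict, on the dense open locus where $g_1,g,g_2\in\Omega_G$ and $g_1gg_2^{-1}\in\Omega_G$, to $(g_1,g,g_2)\mapsto\nu(g_1gg_2^{-1})$. For $\pi$ itself this is condition (1) of \Cref{theoremrationalaction}. For $\hat{H}_{\overline{\Omega}}\circ\pi$ it reduces to two compatibilities: $\eta$ preserves $U^{\pm}$ and $T$ — because $\rho$ permutes $\Psi^{\pm}$ and fixes $\Delta$ — hence preserves $\Omega_G$ and satisfies $\eta(g_1gg_2^{-1})=\eta(g_1)\eta(g)\eta(g_2)^{-1}$; and $\hat{H}_{\overline{\Omega}}\circ\nu=\nu\circ(\eta|_{\Omega_G})$, which one verifies componentwise, the equality on the unipotent factors being the very definition of $\hat{H}_{\overline{\Omega}}$ there and the equality on $T\hookrightarrow\overline{T}$ being exactly the commutative square defining $\rho_{\overline{T}}$ from $\rho_T$. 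For $\pi\circ\hat{H}$ the same compatibilities, now used to rewrite $\hat{H}_{\overline{\Omega}}$ on $\Omega_G\subset\overline{\Omega}$ as $\eta|_{\Omega_G}$ and to invoke multiplicativity of $\eta$, give the claim.

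I expect the genuinely delicate point to be this last verification. The subtlety is that $\eta$ is an \emph{outer} automorphism: it moves the pinning around (it sends $n_{\alpha_i}$ to $n_{\rho(\alpha_i)}$ and permutes the Chevalley system), so it does not commute in any naive way with the internal ingredients $f$, $f'$, $n_0$ built in \Cref{w_0invariantsub} and \Cref{extensionofmultiplication}. The resolution is precisely that we never touch that construction: the characterizing property (1) of $\pi$ is manifestly stable under $\eta$ — it only uses that $\eta$ is a homomorphism preserving $\Omega_G$ and compatible with $\nu$ and $\hat{H}_{\overline{\Omega}}$ — and this is what makes the equivariance identity hold. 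The remaining work is the routine tracking of definition domains of rational morphisms under $\hat{H}$, carried out exactly as in the proof of \Cref{choiceofequivalence}.
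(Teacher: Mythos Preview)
Your proof is correct and follows essentially the same approach as the paper: reduce to the equivariance identity $\hat{H}_{\overline{\Omega}}\circ\pi=\pi\circ\hat{H}$ of rational morphisms, and verify this by the characterizing property (1) of $\pi$ in \Cref{theoremrationalaction} together with the fact that $\eta$ is a group automorphism compatible with $\nu$. The paper's proof is much terser—it simply asserts that the diagram commutes because $\hat{H}$ is induced by the group automorphism $\eta$—but your more detailed unpacking of both the deduction and the verification is exactly what lies behind that assertion.
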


\begin{proof}
    In the view of \Cref{relationonOmega}, it suffices to show that the following diagram of $U'$-rational morphisms
    $$\xymatrix{
\widetilde{G}\times_{U'}\overline{\Omega}_{U'}\times_{U'} \widetilde{G} \ar[r]^{\hat{H}}_{\cong}\ar@{-->}[d]^{\pi_{U'}}  &  \widetilde{G}\times_{U'}\overline{\Omega}_{U'}\times_{U'} \widetilde{G}\ar@{-->}[d]^{\pi_{U'}}\\
\overline{\Omega}_{U'}\ar[r]_{\cong}^{\hat{H}\vert_{\overline{\Omega}_{U'}}} &\overline{\Omega}_{U'}
    }$$
    commutes, where $\pi_{U'}$ is the base change of the $S$-rational morphism $\pi$ defined in \Cref{theoremrationalaction} and the dotted vertical arrows indicate that $\pi_{U'}$ is a $U'$-rational morphism.
    Since $\hat{H}$ is induced by $\eta$ which is an automorphism of $\widetilde{G}$, the commutativity of the diagram then follows from \Cref{theoremrationalaction}.
\end{proof}

Thanks to \Cref{outmorphismpreserverelation} , combined with \Cref{defintionofcompactification}, $\hat{H}$ induces an automorphism $H$ of $\widetilde{\mathcal{X}}$. We take $H$ as the  action induced by $\eta$ on $\widetilde{\mathcal{X}}$. We thus obtain an automorphism $\varphi$ of $\widetilde{\mathcal{X}}$ by composing $H$ and the conjugation action of $\delta$. We claim that $\varphi$ is a descent datum for the projective scheme $\mathcal{X}_0\times_S U'$. In fact, it suffices to check the cocycle condition on $\varphi$, which follows from the cocycle condition on $\xi$ and the schematic density of $G_0$ in $\mathcal{X}_0$. 

\subsection{Effectivity of descent datum}
A descent datum for a projective scheme, in general, is not effective (see \cite[08KE]{stacks-project}). In order to make our descent datum effective, we are going to construct an ample line bundle compatible with the descent datum $\varphi$.



\begin{theorem}\label{descentampleCartier}
    There exists an ample effective Cartier divisor $\mathbf{D}$ on $\widetilde{\mathcal{X}}$ such that $\mathbf{D}$ is stable under the automorphism $\varphi$ of $\widetilde{\mathcal{X}}$, and $\mathbf{D}$ descends to an ample effective Cartier $\mathbf{D}_0$ of $\mathcal{X}_0$.
\end{theorem}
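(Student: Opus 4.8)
The plan is to take for $\mathbf{D}_0$ an explicit non-negative integral combination of the $(G_0\times G_0)$-stable boundary divisors of $\mathcal{X}_0$, the coefficients being exactly those produced by \Cref{dominantweightlemma}; both the $S$-ampleness of $\mathbf{D}_0$ and the $\varphi$-stability of its pullback will then follow by tracking how the two factors of $\varphi$ act on these divisors. Concretely, let $S_{\alpha_i}\subset\mathcal{X}_0$ (for $\alpha_i\in\Delta$) be the $(G_0\times G_0)$-invariant boundary divisors of \Cref{boundarydivisors}, so that $S_{\alpha_i}\cap\overline{\Omega}=V_{\overline{\Omega}}(\mathbb{X}_i)$ and the geometric fibers of $S_{\alpha_i}$ are the classical boundary divisors $\mathbf{X}_i$, and let $(m_i)_{\alpha_i\in\Delta}$ be the tuple of non-negative integers of \Cref{dominantweightlemma}, so that $\gamma:=\sum_i m_i\alpha_i$ is regular dominant and invariant under every automorphism of the based root datum. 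I would set $\mathbf{D}_0:=\sum_{\alpha_i\in\Delta}m_i\,S_{\alpha_i}$ and let $\mathbf{D}$ be the pullback of $\mathbf{D}_0$ to $\widetilde{\mathcal{X}}=\mathcal{X}_0\times_S U'$; then ``$\mathbf{D}$ descends to $\mathbf{D}_0$'' is automatic, and it remains to prove that $\mathbf{D}_0$ is $S$-ample and that $\varphi(\mathbf{D})=\mathbf{D}$.

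For ampleness, by the functoriality of \Cref{defintionofcompactification} and the compatibility of the $S_{\alpha_i}$ with base change (\Cref{boundarydivisors}), the pair $(\mathcal{X}_0,\mathbf{D}_0)$ is the base change of the analogous pair over $\Spec(\mathbb{Z})$, and ampleness is preserved under base change, so by a fibral criterion for relative ampleness (\cite[corollaire~9.6.4]{EGAIV4}) it suffices to show that $\sum_i m_i\mathbf{X}_i$ is ample on the wonderful compactification $\mathcal{X}_{0,\bar{s}}$ for every geometric point $\bar{s}$ of $\Spec(\mathbb{Z})$. For this I would invoke the description of the Picard group of a wonderful compactification and of its ample cone (\cite[Lemma~6.1.9]{BrionKumar} and the surrounding discussion, together with \cite{completesymmetricvarieties}): under the identification of $\Pic$ with a sublattice of the weight lattice, the class of $\mathcal{O}(\sum_i m_i\mathbf{X}_i)$ corresponds to $\gamma$, and the associated $(G\times G)$-linearized line bundle is ample exactly when its weight is regular dominant, i.e.\ when $\langle\gamma,\alpha_j^\vee\rangle>0$ for all $j$ --- which is the second property of $\gamma$ guaranteed by \Cref{dominantweightlemma}. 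Hence $\sum_i m_i\mathbf{X}_i$ is ample, and therefore $\mathbf{D}_0$ is $S$-ample.

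For the $\varphi$-stability, recall that $\varphi$ is the composite of the conjugation by $\delta\in\widetilde{G}(U')$ and the automorphism $H$ induced by the outer part $\eta$. The conjugation by $\delta$ is the action of $(\delta,\delta)$ through the $(G_0\times G_0)$-action of \Cref{definitionofgrpaction}, hence fixes each $S_{\alpha_i}\times_S U'$ by the $(G_0\times G_0)$-invariance of \Cref{boundarydivisors}, and so fixes $\mathbf{D}$. As for $H$: by its construction via $\hat{H}$, the restriction $H\vert_{\overline{\Omega}}$ is an automorphism of $\overline{\Omega}=U^-\times\overline{T}\times U^+$ that permutes the coordinates $\mathbb{X}_i$ of $\overline{T}$ according to the Dynkin-diagram automorphism $\rho$; since each $S_{\alpha_i}\times_S U'$ is an irreducible $(G\times G)$-stable divisor with trace $V_{\overline{\Omega}}(\mathbb{X}_i)$ on the fiberwise dense open $\overline{\Omega}$, and $H$ preserves $\overline{\Omega}$ and carries $(G\times G)$-stable divisors to $(G\times G)$-stable divisors (being $(\eta\times\eta)$-equivariant), we get $H(S_{\alpha_i}\times_S U')=S_{\alpha_{\rho(i)}}\times_S U'$. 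As $(m_i)$ is $\rho$-invariant, $H(\mathbf{D})=\mathbf{D}$, whence $\varphi(\mathbf{D})=\mathbf{D}$. Since $\varphi$ already satisfies the cocycle condition on $U''$, this exhibits $\mathbf{D}$ as a $\varphi$-stable ample Cartier divisor descending to $\mathbf{D}_0$, which is the assertion.

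The main obstacle is the fibral ampleness step: one must pin down the class of $\mathcal{O}_{\mathcal{X}_{0,\bar{s}}}(\sum_i m_i\mathbf{X}_i)$ in the Picard group of the wonderful compactification and recall precisely which line bundles there are ample. It is only at this point that the \emph{strict} regular dominance $\langle\gamma,\alpha_j^\vee\rangle>0$ of \Cref{dominantweightlemma} is used: $\rho$-invariance of $(m_i)$ alone would give $\varphi$-stability but not ampleness, and mere positivity of the $m_i$ would not give ampleness either. A secondary, purely bookkeeping point is to match the permutation $\rho$ appearing in the definition of $H$ with the induced permutation of the $S_{\alpha_i}$, which is routine from the explicit formula for $\hat{H}$ once the sign conventions of \Cref{Omegaembedformula} are taken into account.
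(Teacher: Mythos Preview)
Your proposal is correct and follows essentially the same approach as the paper: define the divisor as the combination $\sum_i m_i S_{\alpha_i}$ with the coefficients from \Cref{dominantweightlemma}, verify ampleness fibrewise via the known ample cone of the classical wonderful compactification, and split the $\varphi$-stability into the inner part (handled by $(G\times G)$-invariance of the $S_{\alpha_i}$) and the outer part (handled by $\rho$-invariance of the $m_i$). The only cosmetic difference is that the paper works directly on $\widetilde{\mathcal{X}}$ and then observes that $\mathbf{D}$ descends to $\mathcal{X}_0$, whereas you start from $\mathbf{D}_0$ on $\mathcal{X}_0$ and pull back; and for the fibral ampleness the paper invokes \cite[Proposition~6.1.11~(ii),~(iii)]{BrionKumar} rather than Lemma~6.1.9, which makes the identification $\mathcal{O}(\mathbf{X}_i)\cong\mathcal{L}(\alpha_i)$ and the regular-dominant $\Rightarrow$ ample implication explicit.
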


\begin{proof}
    Let $\gamma\coloneq \Sigma_{i=1}^n n_i \alpha_i$ be the sum of the positive roots with respect to the based root datum $\mathbf{R}$. Then $\gamma$ is invariant under $\rho$. By \cite[Part~II, Chapter~1, 1.5(6)]{Repjantzen}, $\gamma$ is regular dominant. Consider the effective Cartier divisor 
    $$\mathbf{D}:=\Sigma_{i=1}^n n_i S_{\alpha_i},$$ 
    where $S_{\alpha_i}$ are the boundary effective Cartier divisor of $\widetilde{\mathcal{X}}$ from \Cref{boundarydivisors}. By \cite[corollaire 9.6.5]{EGAIV2}, being relative ample can be checked fiberwise. Hence, to show that $\mathbf{D}$ is $U'$-ample, we are left to check that, for each point $u\in U'$, the geometric fiber $\mathbf{D}_{\overline{k(u)}}=\Sigma_{i=1}^n n_i (S_{\alpha_i})_{\overline{k(u)}}$ is an ample divisor of the geometric fiber $\widetilde{\mathcal{X}}_{\overline{k(u)}}$ which, by \Cref{groupembedtocompac}, is identified with the wonderful compactification of $\widetilde{G}_{\overline{k(u)}}$. By \Cref{boundarydivisors}, $(S_{\alpha_i})_{\overline{k(u)}}$ is a boundary divisor in  $\widetilde{\mathcal{X}}_{\overline{k(u)}}\backslash\widetilde{G}_{\overline{k(u)}}$. By \cite[Proposition~6.1.11 (ii)]{BrionKumar}, $\mathcal{L}_{\widetilde{G}_{\overline{k(u)}}}((S_{\alpha_i})_{\overline{k(u)}})$ is the line bundle $\mathcal{L}_{\widetilde{G}_{\overline{k(u)}}}(\alpha_i)$ which is defined in \cite[Proposition~6.1.11 (i)]{BrionKumar}. Thus we have that 
    \begin{flalign*}
 \mathcal{L}_{\widetilde{G}_{\overline{k(u)}}}(\mathbf{D}_{\overline{k(u)}})=\sum_{i=1}^n n_i \mathcal{L}_{\widetilde{G}_{\overline{k(u)}}}((S_{\alpha_i})_{\overline{k(u)}})=\sum_{i=1}^n n_i \mathcal{L}_{\widetilde{G}_{\overline{k(u)}}}(\alpha_i)=\mathcal{L}_{\widetilde{G}_{\overline{k(u)}}}(\sum_{i=1}^n n_i \alpha_i)=\mathcal{L}_{\widetilde{G}_{\overline{k(u)}}}(\gamma),
    \end{flalign*}
where the third equality follows from \emph{loc. cit}.
 As $\gamma$ is regular dominant, by \cite[Proposition~6.1.11(iii)]{BrionKumar}, $\mathcal{L}_{\widetilde{G}_{\overline{k(u)}}}(\gamma)$ is ample. Hence $\mathbf{D}$ is an ample effective Cartier divisor.
    
Now we show that $\mathbf{D}$ is stable under $\varphi$. Recall that $\varphi$ is the composition of the conjugate action of an element of $\widetilde{G}(U')$ and the automorphism $H$ defined by the automorphism $(\rho,^t \rho)$ of the based root datum $\mathbf{R}$. By \Cref{boundarydivisors}, each $S_{\alpha_i}$ is $(\widetilde{G}\times_{U'}\widetilde{G})$-stable, hence so is $\mathbf{D}$. Moreover, by \emph{loc. cit.,} we see that $H$ takes $S_{\alpha_i}$ to $S_{\alpha_{\rho(i)}}$, and their coefficients $n_i$ and $n_{\rho(i)}$ in $\mathbf{D}$ are equal because of the invariance of $\gamma$ under $\rho$, thus $\mathbf{D}$ is stable under $H$ as well, as desired.

By applying \Cref{boundarydivisors} to $\mathcal{X}_0$, we get divisors $S_{0\,\alpha_i}$ on the boundary of $G_0$ in $\mathcal{X}_0$.
Notice that, by the fibral description of $S_{\alpha_i}$ in \Cref{boundarydivisors}, each $S_{\alpha_i}$ descends to $S_{0\,\alpha_i}$. Thus $\mathbf{D}$ descends to $\mathbf{D}_0=\Sigma_{i=1}^n n_iS_{0\,\alpha_i}$ which is an ample effective divisor by the same argument as above.
\end{proof}

\begin{corollary}\label{solutiontoconjecadjoint}
    The $(G_0\times_S U)\times_U(G_0\times_S U)$-scheme $\mathcal{X}_0 \times_S U$ over $U$ descends to a smooth projective $(G\times_S G)$-scheme $\mathcal{X}$ over $S$ such that  the geometric fibers of $\mathcal{X}$ are the wonderful compactifications of the corresponding geometric fibers of $G$, and $\mathcal{X}$ contains $G$ as an $S$-dense open subscheme. Moreover, the boundary $\mathcal{X}\backslash G$ is a smooth relative Cartier divisor with $S$-relative normal crossings.
\end{corollary}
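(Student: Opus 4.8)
The plan is to descend everything along the étale cover $U\to S$. The two essential inputs are already in hand: the descent datum $\varphi$ on $\widetilde{\mathcal X}=\mathcal X_0\times_S U'$, whose cocycle condition over $U''$ was established right after \Cref{outmorphismpreserverelation} from the cocycle condition on $\xi$ and the schematic density of $G_0$ in $\mathcal X_0$; and the $\varphi$-stable ample effective Cartier divisor $\mathbf D$ of \Cref{descentampleCartier}, which is the pull-back to $\widetilde{\mathcal X}$ of the ample divisor $\mathbf D_0$ on $\mathcal X_0$. (Having fixed $S$ so that $G$ becomes split over some étale cover $U$, the general base is recovered at the end by gluing over a Zariski cover of $S$ using uniqueness of fppf descent.)

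First I would make the descent effective. The datum $\varphi$ is a descent datum for the $U$-scheme $\mathcal X_0\times_S U$, since both of its pull-backs to $U'$ are $\widetilde{\mathcal X}$. The pull-back of $\mathbf D_0$ along $\mathcal X_0\times_S U\to\mathcal X_0$ is $U$-ample, and its two pull-backs to $U'$ both coincide with $\mathbf D$; as $\mathbf D$ is $\varphi$-stable, $\varphi$ promotes to a descent datum on this pair. Since a descent datum on a scheme quasi-projective over the base together with a compatible relatively ample invertible sheaf is effective (see \cite[exposé~VIII]{SGA1} or \cite[Section~6.1]{BLR}; some such hypothesis is needed, cf. \cite[08KE]{stacks-project}), I obtain a projective $S$-scheme $\mathcal X$ with an isomorphism $\mathcal X\times_S U\cong\mathcal X_0\times_S U$ compatible with $\varphi$. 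Smoothness and properness of $\mathcal X$ over $S$ then follow by fppf descent of these properties of morphisms from \Cref{schematicnature} and \Cref{properness} applied to $\mathcal X_0\times_S U$ over $U$, while projectivity is furnished by the descended ample divisor.

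Next I would descend the group action and the open immersion. Writing $\xi$ as the composite of conjugation by $\delta$ and the lift of $\eta$ through the pinning (\Cref{simdirectprodautogrp}), and recalling that $\varphi$ is the composite of the conjugation action of $\delta$ and the automorphism $H$ induced by the group automorphism $\eta$, one gets for sections $g_1,g_2$ of $\widetilde G$ and $x$ of $\widetilde{\mathcal X}$
$$\varphi(g_1\cdot x\cdot g_2)=\delta\cdot\bigl(\eta(g_1)\cdot H(x)\cdot\eta(g_2)\bigr)\cdot\delta^{-1}=\xi(g_1)\cdot\varphi(x)\cdot\xi(g_2),$$
where the middle equality expresses the $(\widetilde G\times\widetilde G)$-equivariance of $H$ (a restatement of \Cref{outmorphismpreserverelation}) and the outer ones use the action axioms of \Cref{definitionofgrpaction}. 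Hence the action is a morphism of descent data and descends to a $(G\times G)$-action on $\mathcal X$. The same identity applied to $\mathbf i(e)$, which is fixed both by conjugation by $\delta$ and by $H$, gives $\varphi\circ\mathbf i=\mathbf i\circ\xi$, so the open immersion $\mathbf i\colon\widetilde G\hookrightarrow\widetilde{\mathcal X}$ coming from \Cref{groupembedtocompac} descends to a $(G\times G)$-equivariant open immersion $G\hookrightarrow\mathcal X$. For a geometric point $\overline s$ of $S$, the isomorphism $\mathcal X\times_S U\cong\mathcal X_0\times_S U$ induces $\mathcal X_{\overline s}\cong(\mathcal X_0)_{\overline s}$, which by \Cref{groupembedtocompac} is the wonderful compactification of $(G_0)_{\overline s}\cong G_{\overline s}$ compatibly with $\mathbf i$; the fiberwise density of $G$ in $\mathcal X$ is inherited from that of $\widetilde G$ in $\widetilde{\mathcal X}$.

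Finally, $\varphi\circ\mathbf i=\mathbf i\circ\xi$ shows that $\widetilde{\mathcal X}\backslash\widetilde G$ is $\varphi$-stable, hence descends to the closed subscheme $\mathcal X\backslash G$; since being a relative effective Cartier divisor, being smooth over the base, and being a relative normal crossings divisor are all fppf-local on the base, \Cref{boundarydivisors} (applied to $\mathcal X_0\times_S U$ over $U$) shows $\mathcal X\backslash G$ is a smooth relative Cartier divisor with $S$-relative normal crossings. I expect the main obstacle, rather than the formal descent bookkeeping, to be the simultaneous compatibility of the single datum $\varphi$ with the group action, the open immersion and the boundary; this rests entirely on the decomposition of $\xi$ into its inner and outer parts together with \Cref{outmorphismpreserverelation}, and the reason the projective descent can be made effective at all is the $\varphi$-stable ample divisor supplied by \Cref{descentampleCartier}.
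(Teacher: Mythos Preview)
Your proposal is correct and follows essentially the same route as the paper: make the descent effective via the $\varphi$-stable ample divisor of \Cref{descentampleCartier}, then descend the action and the open immersion by checking compatibility with $\xi$ separately on its inner and outer parts, and finally transport the fiberwise and boundary statements from $\mathcal{X}_0$. Your explicit identity $\varphi(g_1\cdot x\cdot g_2)=\xi(g_1)\cdot\varphi(x)\cdot\xi(g_2)$ is exactly the commutative square the paper draws, and your treatment of the boundary via fppf-locality of the relative normal crossings condition matches the paper's.
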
 

\begin{proof}
    By \cite[Section~6.1, Theorem~7]{BLR}, the descent datum $\varphi$ of the pair $$(\mathcal{X}_0\times_S U, \mathcal{L}_{\mathcal{X}_0 \times_S U}(\mathbf{D}_0\times_S U))$$
    is effective, where $\mathbf{D}_0$ is given in \Cref{descentampleCartier}. Thus $\mathcal{X}_0\times_S U$ descends to a scheme $\mathcal{X}$ over $S$, which is quasi-projective over $S$ because $\mathcal{L}_{\mathcal{X}_0 \times_S U}(\mathbf{D}_0\times_S U)$ descends to a relative ample line bundle on $\mathcal{X}$. By \cite[Proposition~(2.7.1) (vii)]{EGAIV2}, this $\mathcal{X}$ inherits properness from $\mathcal{X}_0$, hence $\mathcal{X}$ is projective over $S$.

    In order to descend the action of $(G_0\times_S U)\times_U(G_0\times_S U)$ on the scheme $\mathcal{X}_0 \times_S U$, we have to show that the following diagram commutes:
    $$\xymatrix{
\widetilde{G}\times_{U'}\widetilde{\mathcal{X}}\times_{U'}\widetilde{G}  \ar[r]^-{\widetilde{\Theta}} \ar[d]^-{\xi\times\varphi\times\xi}  & \widetilde{\mathcal{X}} \ar[d]^-{\varphi}\\
\widetilde{G}\times_{U'}\widetilde{\mathcal{X}}\times_{U'}\widetilde{G}  \ar[r]^-{\widetilde{\Theta}}  & \widetilde{\mathcal{X}},
}
$$
where $\widetilde{\Theta}$ is the action of $\widetilde{G}\times_{U'}\widetilde{G}$ on $\widetilde{\mathcal{X}}$ defined in \Cref{definitionofgrpaction}. Recall that $\xi$ (resp., $\varphi$) consists of an inner part and an outer part, hence it suffices to verify the above diagram separately. For the inner part, the commutativity follows from the group action law on $\widetilde{X}$; for the outer part, it follows from the definition of $\varphi$. Then by \cite[Section~6.1, Theorem~6 (a)]{BLR}, this $\widetilde{\Theta}$ descends to a morphism $\Theta\colon G\times_S\mathcal{X}\times_S G\longrightarrow \mathcal{X}$ which is again a group action.

Since $\varphi$ restricts to $\xi$, again, by \cite[Section~6.1, Theorem~6~(a)]{BLR}, the open embedding $G_0\times_S U\hookrightarrow \mathcal{X}_0\times_S U$ defined in \Cref{groupembedtocompac} descends to a morphism $i\colon G \longrightarrow \mathcal{X}$ which is an open immersion by \cite[Proposition~2.7.1(x)]{EGAIV2}. Finally, the fiberwise claim on $\mathcal{X}$ follows from the corresponding results on $\mathcal{X}_0$, see \Cref{groupembedtocompac}. 

By \Cref{boundarydivisors}, the complement $\mathcal{X}_0\backslash G_0$ is of $S$-relative normal crossings. Hence, since $\mathcal{X}\backslash G$ is descended from $(\mathcal{X}_0\backslash G_0)\times_S U$ by the definition of being of relative normal crossings \cite[exposé~XIII, 2.1]{SGA1}, $\mathcal{X}\backslash G$ is of $S$-relative normal crossings as well.
\end{proof}

\begin{remark}
    In \cite{Wedhornspherical}, Wedhorn introduced the notion of a spherical space over an arbitrary base scheme as a generalization of a spherical variety over an algebraically closed field. More precisely, in loc. cit., a spherical space for a reductive group scheme $G$ over a scheme $S$ is defined to be a flat, separated, and finitely presented algebraic space $X$ over $S$ together with an action by $G$ such that for any $s\in S$, $X_{\overline{k(s)}}$ is a spherical $G_{\overline{k(s)}}$-variety in the sense that $X_{\overline{k(s)}}$ has an open schematically dense orbit under a (hence all) Borel subgroup of $G_{\overline{k(s)}}$. Since a classical wonderful compactification is spherical, \Cref{solutiontoconjecadjoint} produces plenty of meaningful examples of spherical spaces. 
\end{remark}

\subsection{Picard group scheme in general case}\label{picardgroupnonsplit}
By Grothendieck's result on representability of relative Picard functor \cite[n°~232, théorème~3.1]{FGA}, the relative Picard functor $P_{\mathcal{X}/S}$ (recalled in \Cref{boundarydivisor}) is represented by a scheme. In this section, we will study the Picard group scheme of $\mathcal{X}$ for general adjoint reductive group scheme which is not assumed to be split. The base is assumed to be a field $k$ in this section. 

\subsubsection{$\ast$-action} In this part, we recall the definition of the $\ast$-action defined by Borel and Tits in \cite[\S~6.2]{BorelTits}.
Let $k_s$ be the separable closure of $k$. Then $G_{k_s}$ splits over $k_s$. Let $\mathfrak{g}$ be the Lie algebra of $G_{k_s}$. Let $l$ be the rank of $G_{k_s}$. Let $T\subset G$ be a maximal $k$-torus which exists by Grothendieck's \cite[exposé~XIV, théorème~1.1]{SGA3II}. Let $\Phi\coloneq \Phi(G_{k_s},T_{k_s})$ be the set of roots with respect to $T_{k_s}$. We fix a pinning of $G_{k_s}$ 
$$\mathcal{E}\coloneq \{T_{k_s}\subset B \subset G_{k_s}, \Delta\subset X^*(T_{k_s}), \{g_{\alpha}\in \Gamma (S,\mathfrak{g}_{\alpha})\backslash 0\vert\alpha\in\Delta\}\},$$
where $\mathfrak{g}= \bigoplus_{\alpha\in\Phi}\mathfrak{g}_{\alpha}$ is the weight decomposition, $B$ is a Borel subgroup of $G_{k_s}$ and $\Delta$ is the set of simple roots defined by $B$. Let $\Delta=\{\alpha_1, \alpha_2,...,\alpha_l\}$. Let $\Phi^+$ be the corresponding positive system of roots for $\Phi$. Then, for an element $\gamma\in \Gal(k_s/k)$, $\gamma\cdot \Phi^+$ is again a positive system of roots for $\Phi$. By \cite[exposé~XXIV, lemme~1.5]{SGA3III}, there is a unique inner automorphism $w_{\gamma}$ of $G_{k_s}$ which sends $\gamma\cdot \Phi^+$ to $\Phi^+$. We define the $\ast$-action of $\Gal(k_s/k)$ on $\Delta$ by 
$$\gamma\ast r\coloneq w_{\gamma}\cdot \gamma\cdot r,\; r\in\Delta.$$
Note that if $G$ is quasi-split over $k$, then the $\ast$-action is simply the natural action of $\Gal(k_s/k)$ on $\Delta$. An intrinsic way to express $\ast$-action is to introduce the notion of a scheme of Dynkin diagrams, see \cite[Remark~7.1.2]{redctiveconrad} and \cite[exposé~XXIV, \S~3]{SGA3III}. The $\ast$-action clearly respects the structure of Dynkin diagrams.

\subsubsection{Descent of Picard group scheme}
The natural action of the Galois group $\Gal(k_s/k)$ on $G_{k_s}$ induces an action of $\Gal(k_s/k)$ on the Picard group $\Pic(\mathcal{X}_{k_s})$. An element $\gamma\in \Gal(k_s/k)$ sends the divisor $\overline{B\dot{s_i}B^-}$ to the divisor $\overline{\gamma(B)\gamma(\dot{s_i})\gamma(B^-)}$. Note that $\gamma$ transfers the pinning $\mathcal{E}$ into a pinning 
$$\gamma(\mathcal{E})\coloneq \{T_{k_s}\subset \gamma(B) \subset G_{k_s}, \gamma(\Delta)\subset X^*(T_{k_s}), \{\gamma(g_{\alpha})\in \Gamma (S,\mathfrak{g}_{\gamma\cdot\alpha})\backslash 0\vert \alpha\in\Delta \}\},$$
where $\gamma(\Delta)$ is defined by the natural \emph{left} action of $\Gal(k_s/k)$ on $X^*(T_{k_s})$. Moreover $\gamma(\dot{s_i})$ is a representative of the simple reflection along $\gamma(\alpha_i)$.

By \cite[exposé~XXIV, lemme~1.5]{SGA3III}, there is a unique inner automorphism of $G_{k_s}$ which sends $\gamma(\mathcal{E})$ to $\mathcal{E}$; since $G$ is adjoint over $k$, the inner automorphism is given by a unique element $g_{\gamma}\in G(k_s)$. More precisely, we have that $\Ad_{g_{\gamma}}(T_{k_s})=T_{k_s}$ and that the conjugation $\Ad_{g_{\gamma}}$ naturally induces a \emph{left} action on $X^*(T_{k_s})$ which sends $\gamma(\Delta)$ to $\Delta$. Now since $G_{k_s}\times_{k_s}G_{k_s}$ acts trivially on the discrete group $\Pic(\mathcal{X}_{k_s})$, we have an equivalent of divisors $$\overline{\gamma(B)\gamma(\dot{s_i})\gamma(B^-)}\sim\Ad_{g_{\gamma}}(\overline{\gamma(B)\gamma(\dot{s_i})\gamma(B^-)})=\overline{B\Ad_{g_{\gamma}}(\gamma(\dot{s_i}))B^-},$$
where $\Ad_{g_{\gamma}}(\gamma(\dot{s_i}))$ is a representative of the simple reflection along the image of $\alpha_i$ under the $\ast$-action of $\gamma$.

Recall that, by \Cref{boundarydivisor}, the pinning $\mathcal{E}$ of $G_{k_s}$ 
gives an isomorphism between the relative Picard group scheme $\underline{\Pic_{k_s}}(\mathcal{X}_{k_s})$ and the constant group scheme $\underline{\mathbb{Z}^{\oplus \Delta}}_{k_s}$ by sending $\overline{B\dot{s_i}B^-}$ to the element of $\mathbb{Z}^{\oplus \Delta}$ indexed by $\alpha_i$.
The above argument shows that, under this isomorphism, the Galois action of $\Gal(k_s/k)$ on $\underline{\Pic_{k_s}}(\mathcal{X}_{k_s})$ amounts to the action of $\Gal(k_s/k)$ on the constant group scheme $\underline{\mathbb{Z}^{\oplus \Delta}}_{k_s}$ induced by the $\ast$-action of $\Gal(k_s/k)$ on $\Delta$. Let $\mathbb{O}(\mathbb{Z}^{\oplus \Delta})$ be the set of the $\Gal(k_s/k)$-orbits in $\mathbb{Z}^{\oplus \Delta}$. Then, by Galois descent and choosing a representative $\dot{o}$ for each orbit $o\in \mathbb{O}(\mathbb{Z}^{\oplus \Delta})$, we have an (non canonical) isomorphism of $k$-schemes
$$\underline{\Pic_k}(\mathcal{X})\xlongrightarrow{\cong}\coprod_{o\in \mathbb{O}(\mathbb{Z}^{\oplus \Delta})}\Spec(k_o),$$
where $\underline{\Pic_k}(\mathcal{X})$ is the Picard group scheme of $\mathcal{X}$ and $k_o\subset k_s$ is the subfield fixed by the stabilizer of $\dot{o}$ in $\Gal(k_s/k)$.

\subsubsection{Examples}
Now let us see some examples of the Picard groups of our compactifications of forms of absolute simple groups. 

If $G$ is an \emph{inner} form of $G_{k_s}$, then the $\ast$-action of $\Gal(k_s/k)$ on $\Delta$ is trivial. Hence the Picard scheme $\underline{\Pic_k}(\mathcal{X})$ is isomorphic to the constant scheme $\underline{\mathbb{Z}^{\oplus \Delta}}_{k}$. This is also true for any $G$ of type $B_n$, $C_n$, $E_7$, $E_8$, $F_4$ and $G_2$ because the $\ast$-action preserves the structure of Dynkin diagrams. 

Now assume that $G$ is an \emph{outer} form of $G_{k_s}$. By \cite[corollaire~7.8.8]{EGA3II} and \cite[\S~8.1, Proposition~4]{BLR}, we have $\Pic(\mathcal{X})=\underline{\Pic_k}(\mathcal{X})(k)$. If $G$ is of type $A_n, n\geq 2$ (resp., $D_n, n\textgreater 4$; resp., $D_4$; resp., $E_6$), the Picard group $\Pic(\mathcal{X})$ is isomorphic to the free abelian group of rank $\lceil n/2 \rceil$ (resp., $n-1$; resp., $2$ or $3$; resp., $4$), where $\lceil n/2 \rceil$ is the smallest integer greater than or equal to $n/2$. All aforementioned cases can happen according to the classification of all possible Tits indexes of absolutely simple groups \cite[Table~II]{Titsclassificationsemisimplegroups}.

\section{Applications}\label{sectionapplications}
In this section, we discuss applications of the compactification constructed in \Cref{sectiondescent} to torsors of reductive group schemes. 

The initial motivation of this paper comes from the following conjecture about equivariant compactifications for reductive group schemes by \v{C}esnavi\v{c}ius:

\begin{conjecture}\label{conjecture}
(\cite[Conjecture~6.2.3]{CKproblemtorsors}). For a reductive group $G$ over a Noetherian scheme $S$ such that $G$ is isotrivial in the sense that there exists a finite étale cover $S'$ of $S$ such that $G\times_{S}S'$ splits, there exists a projective, finitely presented scheme $X$ containing $G$ as an $S$-dense open subscheme, together with a left action of $G$ on $X$ that extends the left translation of $G$ on itself.
\end{conjecture}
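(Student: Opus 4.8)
We outline a plan for reducing this conjecture, in general, to the adjoint case settled above. Since the conjecture only asks for a \emph{left}-equivariant projective model in which $G$ is fibrewise dense, we are free to form schematic closures, and the idea is to reassemble $G$ from its standard pieces, compactifying each. As $G$ is isotrivial, so are its radical torus $R=\mathrm{Rad}(G)$, its derived subgroup $G^{\mathrm{der}}$, and its adjoint quotient $G^{\mathrm{ad}}=G^{\mathrm{der}}/Z(G^{\mathrm{der}})$, and the multiplication map realizes $G\simeq(G^{\mathrm{der}}\times R)/\mu$ for the finite central multiplicative-type subgroup $\mu=G^{\mathrm{der}}\cap R$. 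The torus $R$ admits a smooth projective $R$-equivariant compactification $\overline{R}$ over $S$: fixing a finite étale Galois cover $S'\to S$ with group $\Gamma$ that splits $G$, and a $\Gamma$-invariant complete smooth fan in $X_{\ast}(R)_{\mathbb{R}}$, the associated split toric $S'$-variety carries a canonical $\Gamma$-descent datum and descends to $S$.

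For $G^{\mathrm{der}}$ I would pull back the wonderful compactification $\mathcal{X}^{\mathrm{ad}}$ of $G^{\mathrm{ad}}$ provided by \Cref{solutiontoconjecadjoint} along the central isogeny $\pi\colon G^{\mathrm{der}}\to G^{\mathrm{ad}}$, which is an fppf torsor under $\mu'=Z(G^{\mathrm{der}})$ and so corresponds to an $X^{*}(\mu')$-graded sheaf of $\mathcal{O}_{G^{\mathrm{ad}}}$-algebras $\bigoplus_{\chi}L_{\chi}$; each $L_{\chi}$ extends across $\mathcal{X}^{\mathrm{ad}}$ because $\Pic(\mathcal{X}^{\mathrm{ad}})\to\Pic(G^{\mathrm{ad}})$ is surjective, its kernel being generated by the boundary divisors of \Cref{boundarydivisor} and \Cref{boundarydivisors}. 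One then lets $\overline{G^{\mathrm{der}}}$ be the relative normalization of $\mathcal{X}^{\mathrm{ad}}$ in $G^{\mathrm{der}}$: it is finite over $\mathcal{X}^{\mathrm{ad}}$, hence projective over $S$, it contains $G^{\mathrm{der}}$ as a dense open subscheme, its geometric fibres are the classical (non-wonderful) bi-equivariant compactifications of the fibres of $G^{\mathrm{der}}$, and the $(G^{\mathrm{ad}}\times G^{\mathrm{ad}})$-action on $\mathcal{X}^{\mathrm{ad}}$ lifts to a $(G^{\mathrm{der}}\times G^{\mathrm{der}})$-action by the universal property of normalization. Finally $(\overline{G^{\mathrm{der}}}\times_S\overline{R})/\mu$ is a projective $S$-scheme carrying a left $G$-action that extends left translation and containing $G$ as a dense open subscheme, and one takes $X$ to be the schematic closure of $G$ inside it. The delicate verification is that $G$ is \emph{fibrewise} dense in $X$, which is the real content of the conjecture, since schematic closure need not commute with base change to fibres, so a priori $X_{\overline{s}}$ need not be the closure of $G_{\overline{s}}$. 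I would control this by using that $G\to S$ is flat with geometrically integral fibres and that $X\setminus G$ is locally cut out by the Cartier boundary divisor of the ambient model, hence pure of codimension one in $X$, together with a Raynaud--Gruson flattening of $X\to S$ over a fibrewise-dense open subscheme of $S$. When $S$ is affine one can avoid $\mathcal{X}^{\mathrm{ad}}$ altogether: choose a faithful representation $G\hookrightarrow\GL(\mathcal{E})$ on a vector bundle $\mathcal{E}$ and take $X$ to be the schematic closure of $G$ in the standard bi-equivariant compactification $\mathbb{P}(\underline{\End}(\mathcal{E})\oplus\mathcal{O}_S)$ of $\GL(\mathcal{E})$; the density analysis is identical.

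The main obstacle is that the intrinsic method of this paper is confined to the adjoint case: it relies on the simple roots forming a basis of the character lattice (see the remark following \Cref{projectivity}), which fails for non-adjoint $G$. Consequently the semisimple factor $G^{\mathrm{der}}$ must be handled through the non-canonical normalization construction above, whose good behaviour over a general Noetherian base — finiteness of the normalization, its behaviour in small residue characteristics where $\pi$ is not étale, and the retention of smoothness or relative normal crossings one would like — is precisely the point that requires care; and the global reassembly over a non-affine base must be carried out compatibly with a descended ample sheaf, since projectivity is not Zariski-local. These are the difficulties that the adjoint theory developed above circumvents, and which remain to be overcome for the conjecture in general.
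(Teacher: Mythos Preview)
The paper does \emph{not} prove this statement: it is recorded as an open conjecture (\v{C}esnavi\v{c}ius, Conjecture~6.2.3), and the paper only establishes the adjoint case via \Cref{solutiontoconjecadjoint}, noting also that the torus case was previously known. There is therefore no ``paper's own proof'' of the full conjecture to compare against.

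Your proposal is an outline for reducing the general case to the adjoint one, and you are candid that it is incomplete. The main genuine gaps are exactly the ones you flag at the end, but let me sharpen them. First, in the construction of $\overline{G^{\mathrm{der}}}$: extending each $L_{\chi}$ individually across $\mathcal{X}^{\mathrm{ad}}$ is not enough --- you need the \emph{algebra} structure on $\bigoplus_{\chi}L_{\chi}$ to extend, and there is no reason the multiplication maps $L_{\chi}\otimes L_{\chi'}\to L_{\chi+\chi'}$ (which are isomorphisms over $G^{\mathrm{ad}}$) remain morphisms, let alone isomorphisms, for arbitrary choices of extension; the boundary divisors introduce ambiguity precisely here. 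Your fallback to relative normalization over a general Noetherian base is not guaranteed to be finite, nor to commute with base change, so the assertion about geometric fibres is unsupported. Second, the fibrewise density of $G$ in the schematic closure $X$ is, as you say, the whole point, and invoking Raynaud--Gruson flattening over a fibrewise-dense open of $S$ does not address the closed fibres where the problem lives. The affine alternative via $\mathbb{P}(\underline{\End}(\mathcal{E})\oplus\mathcal{O}_S)$ has the identical defect.

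In short: your proposal is a reasonable heuristic roadmap, but it does not close the conjecture, and the paper makes no claim to do so either.
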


If $S$ is a field, then such an $X$ can be obtained by simply taking the schematic closure of $G$ in some projective space which equivariantly contains $G$ as an open subscheme. The main difficulty of \Cref{conjecture} lies in the fact that, in general, the schematic image of a morphism does not commute with nonflat base change.

Note that \Cref{solutiontoconjecadjoint} gives an affirmative answer to \Cref{conjecture} for adjoint reductive group schemes, and also the left and the right $G$-translations are both extended. Notice also that the solution of \Cref{conjecture} for tori has been given, see \cite[Theorem~6.3.1]{CKproblemtorsors}. 

The following result is conditionally established in \cite[Proposition~6.2.4]{CKproblemtorsors} for isotrivial torsors of reductive group schemes under the assumption that \Cref{conjecture} holds. 

\begin{proposition}\label{compactificationtorsor}
    (i) For an isotrivial torsor $X$ under an adjoint reductive group scheme $G$ over a Noetherian scheme $S$, there exists a projective, finitely presented $G$-scheme $\overline{X}/ S$ containing $X$ as a open $S$-dense subscheme such that the $G$-action on $\overline{X}$ restricts to the $G$-action on $X$. 
    
    (ii) If $S$ is the spectrum of a semilocal Noetherian ring, then, for any closed subscheme $Z\subset S$ and any section $a\in X(Z)$, there exists a finite étale cover $\widetilde{S}$ of $S$, a morphism
    $\nu\colon Z\rightarrow\widetilde{S}$ and a section $\widetilde{a}\in X(\widetilde{S})$ whose $\nu$-pullback is $a$.
\end{proposition}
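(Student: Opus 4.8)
The plan is to first compactify the torsor $X$ by twisting the compactification $\mathcal{X}$ of $G$ produced in \Cref{solutiontoconjecadjoint}, and then to carve $\widetilde{S}$ out of this compactification of $X$ by a fibrewise Bertini argument.

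\emph{Construction of $\overline{X}$.} Regard $\mathcal{X}$ as equipped with the two commuting $G$-actions coming from the two factors of $G\times G$, which on the open subscheme $G\subset\mathcal{X}$ restrict to left and right translation. I would form the contracted product $\overline{X}:=X\wedge^{G}\mathcal{X}$, using the torsor structure of $X$ against the left action. Since $X$ is isotrivial, $\overline{X}$ is étale-locally on $S$ isomorphic to $\mathcal{X}$, so it is a smooth, finitely presented algebraic space over $S$ containing $X\wedge^{G}G=X$ as an open subscheme, fibrewise dense because each geometric fibre becomes the wonderful compactification of the corresponding geometric fibre of $G$ (\Cref{groupembedtocompac}). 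The remaining $G$-action on $\mathcal{X}$ commutes with the left action, hence descends to a $G$-action on $\overline{X}$ restricting on $X$ to the torsor action. To see that $\overline{X}$ is a projective scheme I would use that, by \Cref{boundarydivisors} and \Cref{descentampleCartier}, $\mathcal{X}$ admits a $(G\times G)$-stable ample effective Cartier divisor supported on the boundary; its line bundle is in particular invariant under every automorphism $z\mapsto(g,e)\cdot z$, hence compatible with the descent datum that produces $\overline{X}$ from $\mathcal{X}$ over the finite étale cover splitting $X$, so by descent of the pair (scheme, ample bundle) \cite[Section~6.1, Theorem~7]{BLR} the space $\overline{X}$ is quasi-projective, and being proper over $S$, projective. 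This gives the first assertion, with $\overline{X}$ in the role of the $\overline{X}$ of the statement.

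\emph{Reduction of the lifting statement.} Now take $S=\Spec(R)$ with $R$ semilocal Noetherian, with closed points $s_{1},\dots,s_{m}$, and fix $a\in X(Z)$ with $Z\subset S$ closed. As $\overline{X}\to S$ is separated and $Z\to S$ is a closed immersion, the induced $\bar{a}\colon Z\to\overline{X}$ is a closed immersion with image inside $X$. It suffices to produce a closed subscheme $\widetilde{S}\subset X$ that is finite étale over $S$ and contains $\bar{a}(Z)$: then $\bar{a}$ factors as $Z\xrightarrow{\nu}\widetilde{S}\hookrightarrow X$ over $S$, and $\nu$ together with the inclusion $\widetilde{a}\colon\widetilde{S}\hookrightarrow X$ satisfies $\widetilde{a}\circ\nu=a$. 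Two elementary observations will let me verify ``finite étale'' and ``$\subset X$'' on the closed fibres only: first, since $Z\to S$ is a closed immersion, $Z\times_{S}k(s_{i})$ is the spectrum of a quotient ring of the field $k(s_{i})$, hence is empty or a single reduced point $p_{i}\in X_{s_{i}}(k(s_{i}))$ (call it $Z_{i}$); second, for a subscheme of $\overline{X}$ proper over $S$, the loci where it fails to be quasi-finite over $S$, fails to be smooth of relative dimension $0$ over $S$, or fails to be contained in $X$ each have closed image in $S$ containing no closed point, hence are governed by the $s_{i}$ alone.

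\emph{The Bertini step and the main obstacle.} Fix a closed immersion $\overline{X}\hookrightarrow\mathbb{P}^{N}_{S}$ and set $d=\dim(\overline{X}/S)=\dim(G)$. For $n\gg 0$, Serre vanishing (using that $S$ is semilocal and that each $Z_{i}$ is $0$-dimensional, so no $H^{1}$-obstruction arises on the fibres) gives surjectivity of $\Gamma(\overline{X},\mathcal{I}_{\bar{a}(Z)}(n))\to\prod_{i=1}^{m}\Gamma(\overline{X}_{s_{i}},\mathcal{I}_{Z_{i}}(n))$. On each closed fibre $\overline{X}_{s_{i}}$ --- a smooth, geometrically irreducible projective $k(s_{i})$-variety whose complement $\overline{X}_{s_{i}}\setminus X_{s_{i}}$ is a divisor --- I would choose, for $n$ large, sections $f_{i,1},\dots,f_{i,d}\in\Gamma(\overline{X}_{s_{i}},\mathcal{I}_{Z_{i}}(n))$ whose common zero subscheme in $\overline{X}_{s_{i}}$ is finite étale over $k(s_{i})$, contained in $X_{s_{i}}$, and containing $Z_{i}$; over an infinite residue field this is classical Bertini (cut the dimension down by generic hypersurfaces through $p_{i}$ while staying off the boundary), and over a finite residue field it is Poonen's Bertini theorem over finite fields together with its refinements prescribing the jet at finitely many closed points and imposing disjointness from a fixed lower-dimensional closed subset. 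Lifting to global $f_{1},\dots,f_{d}\in\Gamma(\overline{X},\mathcal{I}_{\bar{a}(Z)}(n))$ via the surjectivity above and putting $\widetilde{S}:=\{f_{1}=\dots=f_{d}=0\}\subset\overline{X}$, the two observations of the previous paragraph show that $\widetilde{S}\to S$ is proper with finite fibres, hence finite, is smooth of relative dimension $0$, hence étale, and lands in $X$; and $\bar{a}(Z)\subset\widetilde{S}$ by construction, completing the proof. The formal skeleton here is routine, and I expect the one genuinely non-formal point to be the existence of the fibrewise slices $f_{i,j}$ when $k(s_{i})$ is finite, where the naive generic-hyperplane argument is unavailable; one must invoke Poonen's method (or a Gabber-type variant), and although $X_{s_{i}}$ is rational with a rational point by Lang's theorem, arranging a smooth zero-dimensional complete intersection through a prescribed point over a finite field still seems to require such an input.
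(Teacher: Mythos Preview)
Your proof is correct and follows the same route as the paper's own argument, which simply defers to \cite[Proposition~6.2.4 and Lemma~6.2.2]{CKproblemtorsors}: form $\overline{X}=X\times^{G}\mathcal{X}$ as a contracted product, descend the $(G\times G)$-stable ample boundary divisor to see projectivity, and then perform a fibrewise Bertini slicing (Poonen over finite residue fields) to produce $\widetilde{S}$. You have essentially unpacked those two citations, so there is no divergence in strategy.

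One small point worth making explicit in your final step: when you pass from ``each closed fibre $\widetilde{S}_{s_i}$ is finite \'etale over $k(s_i)$'' to ``$\widetilde{S}\to S$ is smooth of relative dimension~$0$ at every point over $s_i$'', you are implicitly using flatness of $\widetilde{S}$ over $S$ at those points, which does not follow from the fibre alone. It does follow, however, because $\widetilde{S}$ is cut out of the $S$-flat $\overline{X}$ by $d$ sections that restrict to a regular sequence on each closed fibre (by your choice of the $f_{i,j}$), so the local criterion of flatness applies inductively. With this in hand your open--closed propagation argument goes through as stated.
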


The proof of the above result needs the following (simplified) version of Bertini's theorem. For the proof, we refer to \cite[proof of Proposition~4.1.3]{CKproblemtorsors} and references therein.

\begin{proposition}\label{bertinitheorem}
    Let $X$ be a projective scheme over a field $k$, and let $Z\subset X^{\text{sm}}$ be a finte étale $k$-subscheme whose residue fields are separable over $k$. Fix an ample line bundle $\mathcal{O}_X(1)$ over $X$. For any positive integer $n\leq \dim(X)$ and a closed $k$-subscheme $Y$ whose intersection with $Z$ is empty, there exist hypersurfaces $H_1, H_2,..., H_n$ with respect to the given $\mathcal{O}_X(1)$, such that 
    \item[i)]  the intersection $X^{\text{sm}}\cap\bigcap_{1\leq i\leq n}H_i$ is smooth over $k$, is of pure dimension $\dim(X)-n$ and contains $Z$;
    \item[ii)] $\dim(Y\cap \bigcap_{i\in I}H_i)\leq \dim(Y)-\#I$ for all $I\subset\{1,2,...,n\}$.
    
    \noindent Moreover, the hypersurfaces $\{H_i\}_{1\leq i\leq n}$ can be constructed iteratively so that, for any $1\leq i\leq n$ and fixed hypersurfaces $H_1,...,H_{i-1}$, an $H_i$ can be chosen to have sufficiently large degree.
\end{proposition}

\begin{proof}[Proof of \Cref{compactificationtorsor}]
    The proposition follows from \Cref{solutiontoconjecadjoint} and \cite[Proposition 6.2.4]{CKproblemtorsors}.
    For the sake of completeness, we include a proof. The construction of $\overline{X}$ is obtained by twisting the compactification $\mathcal{X}$ of $G$ given in \Cref{solutiontoconjecadjoint}. Let $\overline{X}$ be the contracted product $X\times^{G}\mathcal{X}$, which is an algebraic space by \cite[06PH]{stacks-project}. Let $S'\rightarrow S$ be a finite étale cover which trivializes the torsor $X$. Then the base change $\overline{X}_{S'}=\mathcal{X}_{S'}$ is a quasi-projective $S'$-scheme. By \cite[Proposition~A.5.8 and its proof]{CGP}, the Weil restriction $\Res_{S'/S}(\overline{X}_{S'})$ is quasi-projective over $S$. By the étale descent for properness, $\Res_{S'/S}(\overline{X}_{S'})$ is proper. Combining with \cite[remarques~5.5.4(i)]{EGA2}, this Weil restriction is also $S$-projective. Therefore, the adjunction morphism $\overline{X}\rightarrow \Res_{S'/S}(\overline{X}_{S'})$, which is a closed immersion (étale locally by \cite[Proposition~A.5.7]{CGP}), shows that $\overline{X}$ is a projective scheme over $S$.
    
    Now we show the claim (ii). We fix a closed embedding $\overline{X}\hookrightarrow \mathbb{P}_S^n$ for some $n\in\mathbb{Z}_{\geq 0}$; by applying a linear automorphism of $\mathbb{P}_S^n$, we can assume that $a=[0:...:0:1]\in \mathbb{P}_S^n(Z)$. Let $S'\subset S$ be the union of all closed points of $S$. Let $d$ be the relative dimension of the torsor $X$. Since $X$ is $S$-dense in $\overline{X}$, we have $\dim(\overline{X}_s\backslash X_s)< d$ for all $s\in S'$. By applying \Cref{bertinitheorem} to each fiber of $\overline{X}_{S'}$ with $Z$ there being the image of $a_{S'\cap Z}$, we can find hypersurfaces $H_1, H_2,..., H_d\subset \overline{X}_{S'}$ such that each hypersurface $H_i$ for $1\leq i\leq d$ has a constant degree on $S'$-fibers of $\overline{X}_{S'}$; the intersection $\bigcap_{1\leq i\leq d}H_i\subset \overline{X}_{S'}$ contains $a_{S'\cap Z}$, lies in $X_{S'}$, and is finite étale over $S'$. By applying \cite[corollaire~2.2.4]{EGA3I}, at the cost of increasing the degrees, we can lift $\{H_i\}_{1\leq i\leq d}$ to hypersurfaces $\{\hat{H_i}\subset \overline{X}_{Z\bigcup S'}\}_{1\leq i\leq d}$ such that $\bigcap_{1\leq i\leq d}\hat{H_i}$ contains the section $a$ (this amounts to that the coefficient of the monomial, which is a power of the last coordinate of the projective space $\mathbb{P}^n_{S'\bigcup Z}$, in the defining equation of each $\hat{H_i}$ is zero). Again by applying \emph{loc. cit.,}, we can further lift $\{\hat{H_i}\}_{1\leq i\leq d}$ to hypersurfaces $\{\widetilde{H_i}\subset \overline{X}\}_{1\leq i\leq d}$. Let $\widetilde{S}\coloneq \bigcap_{1\leq i\leq d}\widetilde{H_i}$. Then $\widetilde{S}$ lies in $X$ and contains the image of $a$. The $S$-finiteness of $\widetilde{S}$ follows from our construction of $\{\hat{H_i}\}_{1\leq i\leq d}$ and \cite[01TI, 02OG]{stacks-project}. Similarly, by the openness of the étale locus \cite[02GH]{stacks-project} and \cite[02IL]{stacks-project}, $\widetilde{S}$ is étale over $S$. The natural inclusion $Z\hookrightarrow \widetilde{S}$ is the sought morphism $\nu$.
\end{proof}

\begin{remark}
    The subtlety of the final part of \Cref{compactificationtorsor} lies in arranging the finiteness of $\widetilde{S}$. Without the finiteness requirement, this final part follows from \cite[Proposition~6.1.1~(a)]{CKproblemtorsors} by henselizing along $Z$.
\end{remark}

Another application is the following trick of equating reductive group schemes, which is frequently used (in various forms) in the study of the Grothendieck--Serre conjecture (see \cite[Section~3.1]{CKproblemtorsors} for a detailed survey of this conjecture). The following result is established in \cite[Proposition 6.2.5]{CKproblemtorsors} under the assumption of \Cref{conjecture}. A pleasant point of the following proposition is that it works for reductive group schemes even if we only dispose of the wonderful compactifications for adjoint reductive group schemes. Note that \Cref{equatinggroup} has been used by \v{C}esnavi\v{c}ius and Fedorov in their proof of the Grothendieck--Serre conjecture for totally isotropic reductive groups over unramified regular local rings \cite{unramifiedisotropicGrothendieckserre}.

\begin{proposition}\label{equatinggroup}
    Let $S$ be the spectrum of a Noetherian semilocal ring whose local rings are all geoemetrically unibranch, e.g., normal (see \cite[23.2.1]{EGAIV1}), and let $Z\subset S$ be a closed subscheme. Assume that $G_1$ and $G_2$ are two reductive group schemes over $S$ that have the same root datum over each geometric fiber, and that there is an isomorphism of group schemes $\varphi: G_1\vert_{Z}\rightarrow G_2\vert_{Z}$. Then, there are a finite étale cover $\widetilde{S}$ of $S$, a morphism $c: Z\rightarrow \widetilde{S}$, and an isomorphism $\widetilde{\varphi}:G_1\times_S \widetilde{S}\rightarrow G_2\times_S \widetilde{S}$ that lifts $\varphi$ along $c$.
\end{proposition}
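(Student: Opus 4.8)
The plan is to reduce the statement to the case of torsors under an \emph{adjoint} reductive group scheme, which is exactly what \Cref{compactificationtorsor} supplies, by isolating the ``inner'' and ``outer'' parts of $\varphi$. Let $I \coloneqq \underline{\operatorname{Isom}}_{S\text{-gp}}(G_1, G_2)$ be the functor of group-scheme isomorphisms. Over any strictly henselian local ring of $S$ both $G_1$ and $G_2$ are split, and they carry the same root datum by the geometric-fibrewise hypothesis, so they are isomorphic there; hence $I$ is an étale-locally trivial torsor under $\underline{\Aut}_{S\text{-gp}}(G_1)$, in particular representable by an affine $S$-scheme. Since $S$ has geometrically unibranch local rings, $G_1$ (hence $G_1^{\mathrm{ad}}$) is isotrivial, so $I$ is in fact trivialized over a \emph{finite} étale cover of $S$. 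The separation mechanism is the exact sequence $1 \to G_1^{\mathrm{ad}} \to \underline{\Aut}_{S\text{-gp}}(G_1) \to \underline{\Out}_{S\text{-gp}}(G_1) \to 1$ of \cite[exposé~XXIV, théorème~1.3]{SGA3III}, in which $G_1^{\mathrm{ad}}$ is adjoint reductive, $\underline{\Out}_{S\text{-gp}}(G_1)$ is a finite étale (``twisted constant'') group scheme, and the sequence splits étale-locally via a pinning.

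First I would push $I$ forward along $\underline{\Aut}_{S\text{-gp}}(G_1) \to \underline{\Out}_{S\text{-gp}}(G_1)$ to get the ``outer'' torsor $I^{\mathrm{out}} \coloneqq I \times^{\underline{\Aut}_{S\text{-gp}}(G_1)} \underline{\Out}_{S\text{-gp}}(G_1)$, a torsor under a finite étale group scheme, hence itself a finite étale cover of $S$; set $\widetilde{S}_1 \coloneqq I^{\mathrm{out}}$, which is the spectrum of a semilocal Noetherian ring because it is finite over $S$. The image of $\varphi \in I(Z)$ in $I^{\mathrm{out}}(Z)$ is a section of $\widetilde{S}_1$ over $Z$, i.e.\ an $S$-morphism $Z \to \widetilde{S}_1$, a closed immersion onto a closed subscheme $Z_1 \subset \widetilde{S}_1$ mapping isomorphically to $Z$. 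Over $\widetilde{S}_1$ the pulled-back outer torsor $I^{\mathrm{out}} \times_S \widetilde{S}_1$ acquires its tautological diagonal section, so $I \times_S \widetilde{S}_1$ reduces its structure group to $G^{\mathrm{ad}} \coloneqq (G_1^{\mathrm{ad}})_{\widetilde{S}_1}$: concretely $I \times_S \widetilde{S}_1 \cong J \times^{G^{\mathrm{ad}}} \underline{\Aut}_{S\text{-gp}}(G_1)$ for a unique $G^{\mathrm{ad}}$-torsor $J$ over $\widetilde{S}_1$, and $J$ is isotrivial because $I$ is and the extension splits étale-locally. Crucially, the reduction was made, over $Z_1 \cong Z$, precisely along the outer part of $\varphi$, so $\varphi$ itself determines a section $a \in J(Z_1)$.

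Next I would apply \Cref{compactificationtorsor} over the base $\widetilde{S}_1$ to the adjoint reductive group scheme $G^{\mathrm{ad}}$, the isotrivial torsor $J$, the closed subscheme $Z_1 \subset \widetilde{S}_1$, and the section $a \in J(Z_1)$. This yields a finite étale cover $\widetilde{S}$ of $\widetilde{S}_1$, a $\widetilde{S}_1$-morphism $\nu\colon Z_1 \to \widetilde{S}$, and a section $\widetilde{a} \in J(\widetilde{S})$ with $\nu^{*}\widetilde{a} = a$. Then $\widetilde{S} \to \widetilde{S}_1 \to S$ is a finite étale cover of $S$; the composite $c\colon Z \xrightarrow{\sim} Z_1 \xrightarrow{\nu} \widetilde{S}$ is the desired morphism; and $\widetilde{a} \in J(\widetilde{S}) \subset I(\widetilde{S}) = \operatorname{Isom}_{\widetilde{S}\text{-gp}}(G_1 \times_S \widetilde{S},\, G_2 \times_S \widetilde{S})$ is the desired isomorphism $\widetilde{\varphi}$. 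The relation $c^{*}\widetilde{\varphi} = \varphi$ is exactly $\nu^{*}\widetilde{a} = a$ transported along $Z \cong Z_1$, together with the identification $a = \varphi$ inside $J(Z_1) \subset I(Z_1)$.

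The genuinely substantive input is \Cref{compactificationtorsor}, and through it the adjoint compactification \Cref{solutiontoconjecadjoint} together with the Bertini argument; the content of the present reduction is that the \emph{only} obstruction beyond the adjoint case is outer, and outer forms are governed by a finite étale group scheme, so they cost only a harmless finite étale cover. I expect no serious analytic obstacle. The points requiring care are purely organisational: checking that the reduction of structure group along the chosen section of $I^{\mathrm{out}}$ genuinely restricts to $\varphi$ over $Z$ (so that $a \in J(Z_1)$ is the correct input and $\widetilde{\varphi}$ lifts $\varphi$), that ``spectrum of a semilocal Noetherian ring'' and ``isotrivial torsor'' are inherited by $\widetilde{S}_1$ and $J$ so that \Cref{compactificationtorsor} indeed applies, and that $Z \to \widetilde{S}_1$ is a closed immersion onto a copy of $Z$ — all of which the torsor-and-contracted-product formalism already used in \Cref{sectiondescent} and in the proof of \Cref{compactificationtorsor} handles routinely.
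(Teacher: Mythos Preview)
Your proposal is correct and follows essentially the same route as the paper: form $\mathbf{I}=\underline{\mathrm{Isom}}_{\mathrm{gp}}(G_1,G_2)$, pass to its outer quotient $\overline{\mathbf{I}}=\mathbf{I}/(G_1)_{\mathrm{ad}}$ (your $I^{\mathrm{out}}$), use the section induced by $\varphi$ to produce a first finite \'etale cover over which $\mathbf{I}$ becomes a torsor under the adjoint group, and then feed this torsor into \Cref{compactificationtorsor}. The only cosmetic difference is that the paper keeps just the connected components of $\overline{\mathbf{I}}$ meeting the image of $Z$ (invoking \cite[expos\'e~X, corollaire~5.14]{SGA3II} and the unibranch hypothesis) rather than the whole $I^{\mathrm{out}}$, and it cites \cite[expos\'e~XXIV, th\'eor\`eme~4.1.5, corollaire~4.1.6]{SGA3III} for the isotriviality of the resulting adjoint torsor where you argue more informally.
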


\begin{proof}
    The proposition follows from \Cref{solutiontoconjecadjoint} and \cite[Proposition~6.2.5]{CKproblemtorsors}. For completeness, we include a detailed proof. By \cite[exposé~XXII, corollaire~2.3]{SGA3III}, $G_1$ and $G_2$ split étale locally over $S$. Then thanks to the fiberwise assumption, $G_1$ and $G_2$ are étale locally isomorphic.  
    Thus, the quotient, say $\overline{\mathbf{I}}$, of the isomorphism scheme $\mathbf{I}:=\text{Isom}_{\text{grp}}(G_1, G_2)$ by the action of $(G_1)_{\text{ad}}$ is étale locally constant because, étale locally, $\overline{\mathbf{I}}$ is the outer automorphism scheme of a split reductive group scheme, which is, by \cite[exposé~XXIV, théorème~1.3~(iii)]{SGA3III}, a constant group scheme. 
    By descent \cite[proposition~2.6.1 (i)]{EGAIV2} and \cite[corollaire~17.7.3 (ii)]{EGAIV4}, $\overline{\mathbf{I}}$ is étale and surjective over $S$. By \cite[exposé~X, corollaire 5.14]{SGA3II} and \cite[corollaire~6.1.9]{EGA1}, $\overline{\mathbf{I}}$ decomposes as a disjoint union of connected components, which are clopen, and also finite and étale over $S$. Now consider the section $\overline{\varphi}\in\overline{\mathbf{I}}(Z)$ induced by $\varphi$, and take $\overline{Z}$ as the finite disjoint union of all connected components of $\overline{\mathbf{I}}$ that intersect the image of $Z$. Then $\overline{Z}$ is finite étale over $S$, and the natural embedding $\overline{Z}\hookrightarrow \overline{\mathbf{I}}$, denoted by $\overline{\alpha}$, gives a lifting of $\overline{\varphi}$. By base changing the $(G_1)_{\text{ad}}$-torsor $\mathbf{I}$ over $\overline{\mathbf{I}}$ along $\overline{\alpha}$ and considering the section in $(\mathbf{I}\times_{\overline{\mathbf{I}}}\overline{Z})(Z)$ which is induced by $\overline{\varphi}$ and $\overline{\alpha}$, the proposition then follows from \Cref{compactificationtorsor} (here our assumption on $S$ ensures the isotriviality condition needed in \Cref{compactificationtorsor}, see \cite[exposé~XXIV, théoreme~4.1.5, corollaire~4.1.6]{SGA3III}).
\end{proof}


\renewcommand{\bibname}{References}

\printbibliography

\end{document}